\newtheorem{thm}{Theorem}[section]
\newtheorem{lem}[thm]{Lemma}
\newtheorem{cor}[thm]{Corollary}
\newtheorem{pro}[thm]{Proposition}
\theoremstyle{definition}
\newtheorem{defi}[thm]{Definition}
\newtheorem{ex}[thm]{Example}
\newtheorem{rmk}[thm]{Remark}
\title[extending structures,anti-dendriform algebras] {Extending structures for anti-dendriform algebras and anti-dendriform bialgebras}
\author{Qinxiu Sun}
\address{Department of Mathematics, Zhejiang University of Science and Technology, Hangzhou, 310023} \email{qxsun@126.com}
\author{Xingyu Zeng}
\address{Department of Mathematics, Zhejiang University of Science and Technology, Hangzhou, 310023} \email{948861157@qq.com}
\subjclass[2020]{17A30, 17A36, 17B38, 17B40, 16T10}
\keywords{anti-dendriform algebra, extending structures, commutative Cone cocycle, bialgebra}
\begin{document}
\begin{abstract}
	In this paper, we first explore the extending structures problem by the unified product for anti-dendriform algebras. 
In particular, the crossed product
and non-abelian extension are studied. Furthermore, we explore the inducibility problem of pairs of automorphisms associated
 with a non-abelian extension of anti-dendriform algebras,
and derive the fundamental sequences of Wells. Then we introduce the bicrossed products
and matched pairs of anti-dendriform algebras to solve the factorization problem. Finally, 
we introduce the notion of anti-dendriform D-bialgebras as the bialgebra structures corresponding 
   to double construction of associative algebras with respect to the commutative Cone cocycles. Both of them
are interpreted in terms of certain matched pairs of associative algebras as well 
as the compatible anti-dendriform algebras. The study of coboundary cases leads to the introduction of the AD-YBE, whose
skew-symmetric solutions give coboundary anti-dendriform D-bialgebras. 
The notion of $\mathcal O$-operators of anti-dendriform algebras is introduced to construct skew-symmetric solutions
of the AD-YBE. We also characterize the relationship between the skew-symmetric solutions of AD-YBE 
and $\mathcal O$-operators.
   
\end{abstract}

\maketitle

\tableofcontents

\allowdisplaybreaks

\section{Introduction}

The notion of dendriform algebras first appeared in Loday$'$s study of periodicity of algebraic $K$-theory and operads \cite{27},
 which is a vector space $A$ together with 
two binary operations $\succ$ and $\prec$, satisfying the following compatible conditions:
\begin{equation*}(x\prec y)\prec z=x\prec (y\prec z)+x\prec (y\succ z)
,\end{equation*} 
\begin{equation*}x\succ( y\prec z)=(x\succ y)\prec z
,\end{equation*}
 \begin{equation*}x\succ( y\succ z)=(x\prec y)\succ z+(x\succ y)\succ z,\end{equation*} 
for all $x,y,z\in A$.
The sum $\cdot=\succ+\prec$ gives an associative algebra $(A,\cdot)$ expresses a kind of splitting the associativity. 
More importantly, dendriform algebras are closely related to pre-Lie algebras and 
 Rota-Baxter algebras.
 The notion of anti-pre-Lie algebras \cite{25} was introduced by G. Liu and C. Bai  as the underlying
algebraic structures for nondegenerate commutative 2-cocycles on Lie algebras. Subsequently,
they developed a bialgebra theory for anti-pre-Lie algebras \cite{26}. Inspired by the study of anti-pre-Lie algebras,
Gao, Liu and Bai in \cite{15} introduced the notion of anti-dendriform algebras. Anti-dendriform algebras still
have the property of splitting the associativity, but it is the negative left and right multiplication operators that compose the
bimodules of the sum associative algebras, instead of the left and right multiplication operators
doing so for dendriform algebras. Moreover, there is close relationship between anti-dendriform algebras
and anti-pre-Lie algebras. The study of anti-dendriform algebras and their relationship to Novikov-type algebras provides a
foundational framework relevant to anti-pre-Lie algebras, as seen in \cite{15}. Explicitly, the following commutative diagram holds:
\begin{displaymath}\xymatrix{
 anti-dendriform ~~algebras \ar[d] \ar[r] & anti-pre-Lie~~algebras  \ar[d] \\
associative ~~algebras \ar[r] & Lie~~ algebras.}\end{displaymath}

Algebraic extension is an important method to study algebra structure.
The extending structures problem (ES-problem) arose in the study of group theory developed by 
Agore and Militaru \cite{2}, which unified the two well-known problems in group theory
–H$\rm{\ddot{o}}$lder's extension problem \cite{23} and the factorization problem of Ore \cite{31}. 
Since then, many researchers investigated the problem for various kinds of algebras,
such as Lie algebras (bialgebras), 
Lie (associative) conformal algebras, left symmetric algebras (bialgebras), Leibniz algebras,
Poisson algebras (bialgebras), perm-algebras, Jordan algebras and dendriform algebras,
see \cite{3,4,5,6,8,16,20,21,22,24,33,34,35,36} and references therein. 
We recall the ES-problem under the case of Lie algebras is as follows:
 Assume that $A$ is a Lie algebra, $E$ is a vector space and $i:A\longrightarrow E$ is a linear injective map.
 Characterize and classify all Lie algebraic structures on $E$ such that
 $i:A\longrightarrow E$ is a morphism of Lie algebras. It becomes tempting and natural to
look at the categorical dual of it by just reversing the arrow from the input data of the
extending structures problem. In \cite{7}, Militaru first considered the global 
extension problem (GE-problem) for Leibniz algebras, which is the categorical dual
of the ES-problem. Subsequently, the GE-problem for non-commutative Poisson algebras was studied in \cite{28}.

The classical extension problem is the 'local' version of the GE-problem.
The non-abelian extension is a relatively
general one among various extensions (e.g. central extensions,
abelian extensions, non-abelian extensions etc.). The non-abelian extensions
 were first developed by Eilenberg and Maclane \cite{14},
which induced to the low dimensional non-abelian cohomology group.
Then numerous works have been devoted to non-abelian extensions of
various kinds of algebras, such as Lie (super)algebras, perm-algebras,
 Rota-Baxter groups,
Rota-Baxter Lie algebras and Rota-Baxter Leibniz algebras, 
see \cite{ 11,18,19,24,29} and their references. 
Another interesting study related to extensions of algebraic structures is given by the extensibility or inducibility of a pair
of automorphisms. When a pair of automorphisms is inducible? This problem was first
 considered by Wells \cite{32} for abstract groups.
Since then, several authors have studied this subject further.
As byproducts, the Wells short exact sequences were obtained for
various kinds of algebras \cite{11,13,17,18,19,24}, which connected the relative automorphism groups and
the non-abelian second cohomology groups.

The extending structures and non-abelian extensions for anti-dendriform algebras are still absent. This is 
the first motivation for writing this paper. We are devoted to investigating the ES-problem, 
 the GE-problem and the non-abelian extensions for anti-dendriform algebras.
  More precisely, let $(A,\succ_{A},\prec_{A})$ be an anti-dendriform algebra, $E$ a vector space such that $i:A\longrightarrow E$
  is an injective map and $V$ is a complement of $A$ in $E$. Describe and classify the set of 
 all anti-dendriform algebraic structures on $E$ such that
     $i:A\longrightarrow E$ is a homomorphism of anti-dendriform algebras. 
     Denote by $\mathcal{G}(E,A)$ the small category whose objects are all anti-dendriform algebra structures 
   $(\succeq,\preceq)$ on $E$ such that $A$ is an anti-dendriform subalgebra of $E$. A 
   morphism $\varphi:(E,\succeq,\preceq)\longrightarrow (E,\succeq',\preceq')$ in the category $\mathcal{G}(E,A)$
    is an anti-dendriform algebra homomorphism $\varphi:(E,\succeq,\preceq)\longrightarrow (E,\succeq',\preceq')$ which stabilizes
    $A$, i.e., the left square of the following diagram is commutative:
    \begin{align}\label{Ee} \xymatrix{
  0 \ar[r] & A\ar@{=}[d] \ar[r]^{i} & E\ar[d]_{\varphi} \ar[r]^{\pi} & V \ar@{=}[d] \ar[r] & 0\\
 0 \ar[r] & A\ar[r]^{i} & E \ar[r]^{\pi} & V  \ar[r] & 0.}\end{align}
  In this case we say that the anti-dendriform algebra structures $(\succeq,\preceq)$ and $ (\succeq',\preceq')$
 on $E$ are equivalent, which is denoted by $(E,\succeq,\preceq)\equiv (E,\succeq',\preceq')$. 
  Let $\mathcal{G}'(E,A)$ be the subcategory of $\mathcal{G}(E,A)$. A 
   morphism $\varphi:(E,\succeq,\preceq)\longrightarrow (E,\succeq',\preceq')$ in the category $\mathcal{G}'(E,A)$
    is an anti-dendriform algebra homomorphism $\varphi:(E,\succeq,\preceq)\longrightarrow (E,\succeq',\preceq')$ which stabilizes
    $A$ and co-stabilizes $V$, i.e., the diagram (\ref{Ee}) is commutative. 
    In this case we say that the anti-dendriform algebra structures $(\succeq,\preceq)$ and $ (\succeq',\preceq')$
 on $E$ are cohomologous, which is denoted by $(E,\succeq,\preceq)\approx (E,\succeq',\preceq')$. 
 It is easy to see that $\equiv$ and $\approx$ are equivalence relations on $\mathcal{G}(E,A)$ and $\mathcal{G}'(E,A)$
 respectively. Denote $EXT(E,A)=\mathcal{G}(E,A)/\equiv$ and $EXT'(E,A)=\mathcal{G}'(E,A)/\approx$.
As in the classical extension problem, by classification of anti-dendriform algebra
 structures on $E$ we mean the classification up to an isomorphism of anti-dendriform algebras
 $(E,\succeq,\preceq)\longrightarrow(E,\succeq',\preceq')$ that stabilizes $ A$ and co-stabilizes $V$.
     
 Consider the categorical dual of the ES-problem: the global extension problem. In detail, assume that
  $A$ is an anti-dendriform algebra, $E$ a vector space and $\pi:E\longrightarrow A$ is an epimorphism
    of vector spaces with $V=Ker(\pi)$. Describe and classify the set of all anti-dendriform algebra structures on $E$ such that
     $\pi:E\longrightarrow A$ is a homomorphism of anti-dendriform algebras. If $E$ is an anti-dendriform algebra such that 
     $\pi:E\longrightarrow A$ is a homomorphism of anti-dendriform algebras. Then we have an extension of 
     the anti-dendriform algebra $A$ by $V=Ker(\pi)$ as follows:
     \begin{align}\label{E2}
	0\longrightarrow Ker(\pi)\stackrel{i}{\longrightarrow} E \stackrel{\pi}{\longrightarrow}A\longrightarrow0.
	\end{align}
But the anti-dendriform algebra structure on $ Ker(\pi)$ is not fixed, which depends essentially on the
anti-dendriform algebra structures $(\succeq,\preceq)$ on $E$ which we are looking for. In the
classical extension, which the anti-dendriform algebra structure on $ Ker(\pi)$ is fixed. 
 Denote by $\mathcal{C}_{\pi}(E,A)$ the small category whose objects are all anti-dendriform algebra structures 
   $(\succeq,\preceq)$ on $E$ such that $\pi:E\longrightarrow A$ is a homomorphism of anti-dendriform algebras. A 
   morphism $\varphi:(E,\succeq,\preceq)\longrightarrow (E,\succeq',\preceq')$ in the category $\mathcal{C}_{\pi}(E,A)$
    is an anti-dendriform algebra map $\varphi:(E,\succeq,\preceq)\longrightarrow (E,\succeq',\preceq')$ which stabilizes
    $V$ and co-stabilizes $A$, i.e., the following commutative diagram holds:
    \begin{equation}\label{E} \xymatrix{
  0 \ar[r] & V\ar@{=}[d] \ar[r]^{i} & E\ar[d]_{\varphi} \ar[r]^{\pi} & A \ar@{=}[d] \ar[r] & 0\\
 0 \ar[r] & V \ar[r]^{i} & E \ar[r]^{\pi} & A  \ar[r] & 0.}\end{equation}
As in the classical extension problem, by classification of anti-dendriform algebra
 structures on $E$ we mean the classification up to an isomorphism of anti-dendriform algebras
 $(E,\succeq,\preceq)\longrightarrow(E,\succeq',\preceq')$ that stabilizes $ V=Ker(\pi)$ and co-stabilizes $A$.
  In this case we say that the anti-dendriform algebra structures $(\succeq,\preceq)$ and $ (\succeq',\preceq')$
 on $E$ are cohomologous, which is denoted by $(E,\succeq,\preceq)\approx (E,\succeq',\preceq')$.     
     
A bialgebraic structure on a given algebraic structure is obtained by a 
comultiplication together with compatibility conditions between the
multiplications and comultiplications. 
Lie bialgebras were introduced in the early 1980s by Drinfeld \cite{12}, which
have also been applied in the theory of classical integrable systems and are closely
related to the classical Yang-Baxter equation. In \cite{37,38}, V. Zhelyabin developed Drinfeld’s ideas to
introduce the notion of associative D-bialgebras, which are associative analogue of Lie bialgebras. 
Antisymmetric infinitesimal bialgebras were developed by Aguiar \cite{1}, as the associative analogue
for Lie bialgebras, can be characterized as double constructions of Frobenius algebras \cite{10}. 
Bai and Ni combined the 
Lie bialgebras and (commutative and cocommutative)
infinitesimal bialgebras in a uniform way, which leaded to the notion of a Poisson bialgebra \cite{30}.
A Poisson bialgebra
is characterized as a Manin triple of Poisson algebras which is both a Manin triple of Lie algebras
(with respect to the invariant bilinear form) and a double construction of commutative Frobenius
algebras simultaneously. The method for Poisson bialgebras characterized by Manin triples with respect to the
invariant bilinear forms on both the commutative associative algebras and the Lie algebras is not
available for giving a bialgebra theory for transposed Poisson algebras.
Recently, Bai and Liu investigated the Manin triples 
with respect to the commutative 2-cocycles on Lie algebras, which leaded to the bialgebra theories for 
anti-pre-Lie algebras, transposed Poisson algebras and anti-pre-Lie Poisson algebras \cite{26}.

It is natural to explore the bialgebra structures for anti-dendriform algebras. This is another motivation for writing this paper. Explicitly,
 we develop a notion of an
anti-dendriform D-bialgebra, which is characterized by double construction of associative algebras with respect to the commutative Cone cocycles
and certain matched pairs of anti-dendriform algebras.
The study of coboundary anti-dendriform D-bialgebra leads to the introduction of anti-dendriform Yang-Baxter
equation(AD-YBE). A skew-symmetric solution of the anti-dendriform Yang-Baxter equation naturally gives 
(coboundary) anti-dendriform D-bialgebras.

 The paper is organized as follows. In Section 2, we study the representation of anti-dendriform algebras.  
 In Section 3, we introduce the notion of an extending datum of anti-dendriform algebras and
 study the ES-problem for anti-dendriform algebras. Explicitly,
   using unified products, we show that any anti-dendriform structure on $E$ 
such that $A$ is an anti-dendriform subalgebra is isomorphic to a unified product of $A$ 
by $V$, where $V$ is a complement of $A$ in $E$. With this tool, we construct objects $H^2_{p}(V, A)$ and
 $H^2(V, A)$ to give a theoretical answer for the ES-problem of anti-dendriform algebras.
 In Section 4, we consider a special case of unified products: crossed products $A\sharp V$. 
 Let $\pi:E\longrightarrow A$ be s surjective map with $V=Ker(\pi)$.
 We show that any anti-dendriform algebra
 structure $(\succeq,\preceq)$ on $E$ such that $\pi:E\longrightarrow A$ is a homomorphism of 
 anti-dendriform algebras is cohomologous to a crossed product. Subsequently, the answer to the GE-problem
 is provided in Theorem \ref{GE}. 
 The (non-abelian) extension is studied. We show that any extension of $A$ by $V$ is cohomogous to the following crossed 
 extension:
 \begin{align*}
	0\longrightarrow V\stackrel{i_V}{\longrightarrow} A\sharp V \stackrel{\pi_A}{\longrightarrow}A\longrightarrow0.
	\end{align*}
 We also study the extensibility problem of a pair of automorphisms about a non-abelian extension of anti-dendriform 
 algebras and derive Wells short exact sequences in the context of non-abelian
extensions of anti-dendriform algebras.
 In Section 5, we introduce matched pairs and bicrossed products of anti-dendriform algebras to solve
the factorization problem. 
  In Section 6, we introduce the notion of anti-dendriform D-bialgebras as the bialgebra structures corresponding 
   to double construction of associative algebras with respect to the commutative Cone cocycles. Both of them
are interpreted in terms of certain matched pairs of associative algebras as well 
as the compatible anti-dendriform algebras. The study of coboundary cases leads to the introduction of the AD-YBE, whose
skew-symmetric solutions give coboundary anti-dendriform D-bialgebras. 
The notion of $\mathcal O$-operators of
anti-dendriform algebras is introduced to construct skew-symmetric solutions
of the AD-YBE. We also characterize the relationship between the skew-symmetric solutions of AD-YBE and $\mathcal O$-operators.
   
Throughout the paper, $k$ is a field.  All vector spaces and algebras are over $k$. 
 All algebras are finite-dimensional, although many results still hold in the infinite-dimensional case.
 
\section{Representation of anti-dendriform algebras}

We study the representations of anti-dendriform algebras.

An {\bf anti-dendriform algebra} is a vector space $A$ together with two bilinear maps $\succ,\prec:A\times A\longrightarrow A$ such that
 \begin{equation}\label{A1}x\succ (y\succ z)=-(x\cdot y)\succ z=-x\prec (y\cdot z)=(x\prec y)\prec z,\end{equation} 
 \begin{equation}\label{A2}(x\succ y)\prec z=x\succ (y\prec z),\end{equation} 
  for all $x,y,z\in A$, where $x\cdot x\succ y+x\prec y$. $(A,\cdot)$ is an associative algebra, which is called the associated associative 
  algebra of the anti-dendriform algebra $(A,\succ,\prec)$. Furthermore, $(A,\succ,\prec)$ is called a compatible 
  anti-dendriform algebra structure on $(A,\cdot)$.
  
  When $x\succ y=y\prec x=x\ast y$, $(A,\ast)$ is called an anti-Zinbiel algebra.

 \begin{defi} A {\bf representation (bimodule)} of an anti-dendriform algebra $(A,\succ,\prec)$ is a quintuple
  $(V,l_{\succ}, r_{\succ},l_{\prec},r_{\prec})$, 
 where $V$ is a
vector space and $l_{\succ}, r_{\succ},l_{\prec},r_{\prec} : A \longrightarrow \hbox{End}
(V)$ are four linear maps satisfying the following relations, for all
$x,y\in A$,
\begin{equation}\label{R1}l_{\succ}(x)l_{\succ}(y)=-l_{\succ}(x\cdot y)=-l_{\prec}(x)l_{\cdot}(y)=l_{\prec}(x\prec y),\end{equation}
\begin{equation}\label{R2}r_{\succ}(x\succ y)=-r_{\succ}(y)r_{\cdot}(x)=-r_{\prec}(x\cdot y)=r_{\prec}(y)r_{\prec}(x),\end{equation}
\begin{equation}\label{R3}l_{\succ}(x)r_{\succ}(y)=-r_{\succ}(y)l_{\cdot}(x)=-l_{\prec}(x)r_{\cdot}(y)=r_{\prec}(y)l_{\prec} (x),\end{equation}
\begin{equation}\label{R4}l_{\prec}(x \succ y)=l_{\succ}(x)l_{\prec}(y),\end{equation}
\begin{equation}\label{R5}r_{\prec}(y)r_{\succ}(x)=r_{\succ}(x\prec y),\end{equation}
\begin{equation}\label{R6}r_{\prec}(y)l_{\succ}(x)=l_{\succ}(x)r_{\prec} (y),\end{equation}
where $l_{\cdot}(x)=l_{\succ}(x)+l_{\prec}(x),~r_{\cdot}(x)=r_{\succ}(x)+r_{\prec}(x)$ and $x\cdot y=x\succ y+x\prec y$.
\end{defi}
By Eqs.\eqref{R3} and \eqref{R6}, we get
 \begin{equation}\label{R7}r_{\cdot}(y)l_{\cdot}(x)=l_{\cdot}(x)r_{\cdot} (y).\end{equation}
 
\begin{pro}  Let $(A,\succ,\prec)$ be an anti-dendriform algebra and $V$ a vector space. Assume that
$l_{\succ}, r_{\succ},l_{\prec},r_{\prec} : A \longrightarrow \hbox{End}
(V)$ are four linear maps. Then 
$(V,l_{\succ}, r_{\succ},l_{\prec},r_{\prec})$ is a representation of $(A,\succ,\prec)$ if and only if 
$(A\oplus V,\succeq,\preceq)$ is an anti-dendriform algebra with $(\succeq,\preceq)$ given by
\begin{align*}
    	&(x,a)\succeq (y,b)=(x\succ y,l_{\succ}(a)y+r_{\succ}(b)x),\\
    &(x,a)\preceq(y,b)=(x\prec y,l_{\prec}(a)y+r_{\prec}(b)x),
    \end{align*}
for all $x,y\in A$ and $a,b\in V$. In this case, the anti-dendriform algebra $(A\oplus V,\succeq,\preceq)$ is called
the semi-direct product. Denote it simply by $A\ltimes V$.
\end{pro}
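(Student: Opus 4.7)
The plan is to verify the equivalence by direct comparison. Both the anti-dendriform axioms \eqref{A1}, \eqref{A2} on $E:=A\oplus V$ and the representation axioms \eqref{R1}--\eqref{R6} are linear in each argument, so I would substitute an arbitrary triple $((x,a),(y,b),(z,c))\in E^{\times 3}$ into \eqref{A1} and \eqref{A2} as computed from the given $\succeq,\preceq$, and compare the $A$- and $V$-components on both sides. The $A$-components of \eqref{A1}--\eqref{A2} on $E$ reduce immediately to the corresponding identities for $(A,\succ,\prec)$ and hold automatically; hence the entire content of the equivalence lives in the $V$-components.

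Next, I would expand the $V$-component of each of the two triple-equalities on $E$. By multilinearity in $(a,b,c)$, it suffices to set at most one of $a,b,c$ nonzero at a time and analyze the resulting three sub-cases. In \eqref{A1}, the sub-case involving only the third $V$-entry converts the four-term chain into an identity among the left operators $l_{\succ}, l_{\prec}$ (together with $l_{\cdot}=l_{\succ}+l_{\prec}$) applied to that entry; stripping off the arbitrary vector yields exactly \eqref{R1}. The sub-case involving only the first $V$-entry produces the analogous chain among the right operators, yielding \eqref{R2}, and the sub-case involving only the middle $V$-entry produces the mixed chain \eqref{R3}. The same case analysis applied to \eqref{A2} yields \eqref{R4}, \eqref{R5} and \eqref{R6} respectively. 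Since each implication step is reversible once the arbitrary $V$-vector is stripped off, the forward direction (representation implies anti-dendriform) and the converse (anti-dendriform implies representation) are obtained simultaneously.

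The main obstacle is purely organizational: \eqref{A1} is a chain of four expressions (three simultaneous equalities), and each of the three sub-cases gives a three-term chain of $V$-valued identities, so nine scalar equations have to be matched against \eqref{R1}--\eqref{R3}. The bookkeeping is made transparent by systematically expanding $\cdot=\succ+\prec$ together with $l_{\cdot}=l_{\succ}+l_{\prec}$ and $r_{\cdot}=r_{\succ}+r_{\prec}$ wherever they appear, so that the associative-product contribution on $E$, namely $(x,a)\cdot(y,b)=(x\cdot y,\, l_{\cdot}(\cdot)\cdot+r_{\cdot}(\cdot)\cdot)$, aligns term-by-term with the $l_{\cdot}, r_{\cdot}$ notation used in the representation axioms. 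Once this alignment is set up, the verification is mechanical, and the semi-direct product interpretation follows directly from the definition of $(\succeq,\preceq)$.
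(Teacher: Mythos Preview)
Your proposal is correct: the direct component-wise verification on $E=A\oplus V$, splitting into the three sub-cases according to which single $V$-entry is nonzero, recovers exactly the chains \eqref{R1}--\eqref{R3} from the $V$-component of \eqref{A1} and \eqref{R4}--\eqref{R6} from that of \eqref{A2}, with the $A$-components automatic.

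The paper's own proof is different only in packaging: it does not carry out this computation here but instead cites it as a special case of the unified-product Proposition~\ref{U1}, obtained by setting $\rho_{\succ},\mu_{\succ},\rho_{\prec},\mu_{\prec},\varpi_1,\varpi_2,\succ_V,\prec_V$ all to zero, so that among conditions \eqref{S1}--\eqref{S17} only \eqref{S1} survives. The proof of Proposition~\ref{U1} itself performs precisely your case analysis (triples of the form $((x,0),(y,0),(0,a))$, etc.), so at the level of actual computation the two arguments coincide. Your route is self-contained and avoids a forward reference; the paper's route is more economical since Proposition~\ref{U1} is needed later anyway and the semi-direct product falls out for free.
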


\begin{proof} It is a special case of Proposition \ref{U1}.

\end{proof}
 
Let $A$ and $V$ be vector spaces. For
a linear map $f: A \longrightarrow \hbox{End} (V)$, define a linear
map $f^{*}: A \longrightarrow \hbox{End} (V^{*})$ by $\langle
f^{*}(x)u^{*},v\rangle=\langle u^{*},f(x)v\rangle$ for all $x\in A,
u^{*}\in V^{*}, v\in V$, where $\langle \ , \ \rangle$ is the usual
pairing between $V$ and $V^{*}$.

\begin{pro} \label{zr} Let $(V,l_{\succ}, r_{\succ},l_{\prec},r_{\prec})$ be a
representation of an anti-dendriform algebra $(A,\succ,\prec)$. Then
\begin{enumerate}
	\item $(V,-l_{\succ},-r_{\prec})$ is a representation of the associated
 associative algebra $(A,\cdot)$.
 \item $(V,l_{\succ}+l_{\prec},r_{\succ}+r_{\prec})$ is a representation of the associated
 associative algebra $(A,\cdot)$.
	\item $(V^{*},-(r_{\prec}^{*}+r_{\succ}^{*}),l_{\prec}^{*},r_{\succ}^{*},-(l_{\prec}^{*}+l_{\succ}^{*}))$ is also a
	representation of $(A,\succ,\prec)$. We call it the {\bf dual representation}.
	\item $(V^{*},-r_{\prec}^{*},-l_{\succ}^{*})$ is a
	representation of $(A,\cdot)$.
\item $(V^{*},r_{\prec}^{*}+r_{\succ}^{*},l_{\prec}^{*}+l_{\succ}^{*})$ is a
	representation of $(A,\cdot)$. 
\end{enumerate}
\end{pro}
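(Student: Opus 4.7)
The plan is to verify each of the five statements by direct calculation using the defining relations \eqref{R1}--\eqref{R7} together with the dualization rule for compositions of linear maps: for $f,g:A\to\operatorname{End}(V)$ one has $f^{*}(x)g^{*}(y)=(g(y)f(x))^{*}$, so passing to adjoints reverses the order of composition.

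Part (1) follows at once from the defining relations. The three bimodule axioms $l(x)l(y)=l(x\cdot y)$, $r(y)r(x)=r(x\cdot y)$, and $l(x)r(y)=r(y)l(x)$ for $l=-l_{\succ}$, $r=-r_{\prec}$ reduce, after the signs cancel, to the first equality of \eqref{R1}, the last equality of \eqref{R2}, and \eqref{R6} respectively. Part (2) is the only piece that really calls for a calculation. I would expand $l_{\cdot}(x)l_{\cdot}(y)$ as a sum of four terms and apply \eqref{R4} to the cross term $l_{\succ}(x)l_{\prec}(y)=l_{\prec}(x\succ y)$, \eqref{R1} to $l_{\succ}(x)l_{\succ}(y)=-l_{\succ}(x\cdot y)$, and \eqref{R1} again to $l_{\prec}(x)l_{\cdot}(y)=l_{\succ}(x\cdot y)$; three of the four terms cancel, leaving $l_{\prec}(x\succ y)$, which equals $l_{\cdot}(x\cdot y)$ by writing $x\succ y=x\cdot y-x\prec y$ and using $l_{\prec}(x\prec y)=-l_{\succ}(x\cdot y)$ from \eqref{R1}. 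The symmetric computation using \eqref{R2} and \eqref{R5} gives $r_{\cdot}(y)r_{\cdot}(x)=r_{\cdot}(x\cdot y)$, and the compatibility $l_{\cdot}(x)r_{\cdot}(y)=r_{\cdot}(y)l_{\cdot}(x)$ is precisely \eqref{R7}.

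Parts (4) and (5) then drop out of (1) and (2) by transposition: applying the dualization rule term by term, the three bimodule axioms verified for $(-l_{\succ},-r_{\prec})$ and $(l_{\cdot},r_{\cdot})$ become exactly the three bimodule axioms for $(-r_{\prec}^{*},-l_{\succ}^{*})$ and $(r_{\cdot}^{*},l_{\cdot}^{*})$, the left and right roles being interchanged as dictated by the reversal of composition. For (3) I would check the six relations \eqref{R1}--\eqref{R6} for the primed operators $l'_{\succ}=-r_{\cdot}^{*}$, $r'_{\succ}=l_{\prec}^{*}$, $l'_{\prec}=r_{\succ}^{*}$, $r'_{\prec}=-l_{\cdot}^{*}$. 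Under the dualization rule each primed identity converts into an identity among the original operators that is either one of \eqref{R1}--\eqref{R6} or a short consequence already collected in (2); for instance, $l'_{\succ}(x)l'_{\succ}(y)=-l'_{\succ}(x\cdot y)$ becomes $r_{\cdot}^{*}(x)r_{\cdot}^{*}(y)=r_{\cdot}^{*}(x\cdot y)$, which is the adjoint of the identity proved in (2), while the equality with $l'_{\prec}(x\prec y)$ uses \eqref{R5}. The principal obstacle is bookkeeping --- tracking signs and the left/right swap through the triple equalities of \eqref{R1} and \eqref{R2} --- but no new algebraic input beyond \eqref{R1}--\eqref{R7} is required.
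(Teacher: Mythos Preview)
Your proposal is correct and close in spirit to the paper's proof, but the logical route is organized a bit differently. The paper proves (a), (b), (c) independently from \eqref{R1}--\eqref{R6} and then obtains (d) and (e) by applying (a) and (b) \emph{to the dual representation} constructed in (c): since $(V^{*},-(r_{\prec}^{*}+r_{\succ}^{*}),l_{\prec}^{*},r_{\succ}^{*},-(l_{\prec}^{*}+l_{\succ}^{*}))$ is itself an anti-dendriform representation, parts (a) and (b) applied to it yield $(V^{*},r_{\cdot}^{*},l_{\cdot}^{*})$ and $(V^{*},-r_{\prec}^{*},-l_{\succ}^{*})$ respectively. You instead derive (4) and (5) by directly dualizing the associative bimodules of (1) and (2), using the general fact that $(V,l,r)\mapsto(V^{*},r^{*},l^{*})$ preserves associative bimodules; and you feed the result of (2) back into the verification of (3). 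Both orderings are legitimate and rest on the same identities \eqref{R1}--\eqref{R7}; your route avoids checking (3) before (4)--(5), at the cost of invoking the general dualization lemma explicitly, while the paper's route makes (d)--(e) formal corollaries once the harder statement (c) is in hand.
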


\begin{proof} 
(a) It can be obtained by Eqs. \eqref{R1}-\eqref{R2} and \eqref{R6}.

(b) It follows by Eqs. \eqref{R1}-\eqref{R6}.

(c) By Eqs. \eqref{R1}-\eqref{R2} and \eqref{R4}-\eqref{R5}, for all $x,y\in A,v^{*}\in V^{*}$ and $w\in V$, we have
\begin{align*}&
\langle (r_{\prec}^{*}+r_{\succ}^{*})(x)(r_{\prec}^{*}+r_{\succ}^{*})(y)v^{*},w\rangle
\\=&\langle (v^{*},r_{\prec}(y)r_{\prec}(x)w+r_{\prec}(y)r_{\succ}(x)w+r_{\succ}(y)
r_{\prec}(x)w+r_{\succ}(y)r_{\succ}(x)w\rangle
\\=&\langle (v^{*},r_{\succ}(x\prec y)w\rangle
\\=&\langle (v^{*},r_{\succ}(x\cdot y)w-r_{\succ}(x\succ y)w\rangle
\\=&\langle (v^{*},r_{\succ}(x\cdot y)w+r_{\prec}(x\cdot y)w\rangle
\\=&\langle ((r_{\succ}^{*}+r_{\prec}^{*})((x\cdot y))v^{*},w\rangle,\end{align*}
which indicates that
\begin{equation*}(r_{\prec}^{*}+r_{\succ}^{*})(x)(r_{\prec}^{*}+r_{\succ}^{*})(y)=(r_{\succ}^{*}+r_{\prec}^{*})((x\cdot y),\end{equation*}
that is, Eq.\eqref{R1} holds for $l_{\succ}=-(r_{\prec}^{*}+r_{\succ}^{*})$.
Analogously, Eqs.\eqref{R2}-\eqref{R6} hold for 
$(-(r_{\prec}^{*}+r_{\succ}^{*}),l_{\prec}^{*},r_{\succ}^{*},-(l_{\prec}^{*}+l_{\succ}^{*}))$.

(d) (e) These are obtained directly from (a), (b) and (c).
\end{proof}

\begin{ex} Let $(A,\succ,\prec)$ be an anti-dendriform algebra,
	and $L_{\succ},R_{\succ},L_{\prec},R_{\prec} : A\longrightarrow \hbox{End} (A)$ be the
linear maps defined by $L_{\succ}(x)(y)=R_{\succ}(y)(x)=x\succ y,~~L_{\prec}(x)(y)=R_{\prec}(y)(x)=x\prec y$ 
for all $x,y\in A$. Then 

\begin{enumerate}
\item $ (A,L_{\succ},R_{\succ},L_{\prec},R_{\prec})$ is a representation of $(A,\succ,\prec)$,
which is called the {\bf regular representation} of $(A,\succ,\prec)$. 
Moreover, $(A^{*},-(R_{\prec}^{*}+R_{\succ}^{*}),L_{\prec}^{*},R_{\succ}^{*},-(L_{\prec}^{*}+L_{\succ}^{*})$ 
is the dual representation of $(A,L_{\succ},R_{\succ},L_{\prec},R_{\prec})$. 
\item $ (A,-L_{\succ},-R_{\prec})$ is a representation of the associated
 associative algebra $(A,\cdot)$. $(A^{*},-R_{\prec}^{*},-L_{\succ}^{*})$ 
is the dual representation of $(A,\cdot)$. 
\end{enumerate}
\end{ex}


\section{Extending structures and the ES-problem for anti-dendriform algebras}
In this section, we introduce extending datums and unified products of anti-dendriform algebras. 
Moreover, we give the answer to the ES-problem for anti-dendriform algebras.

\begin{defi}
  		Let $(A,\succ,\prec)$ be an anti-dendriform algebra and $V$ a vector space. 
  An \textbf{extending datum } of $(A,\succ,\prec)$ through $V$
    is a system $\Omega(A,V)=(l_{\succ},r_{\succ},l_{\prec},r_{\prec},\rho_{\succ},\mu_{\succ},\rho_{\prec},\mu_{\prec},
    \varpi_1,\varpi_2,\succ_V,\prec_V)$ 
    consisting of eight linear maps 
  		\begin{align*}
  			l_{\succ},r_{\succ},l_{\prec},r_{\prec}: A \longrightarrow End(V), 
 \ \ \ \rho_{\succ},\mu_{\succ},\rho_{\prec},\mu_{\prec}: V \longrightarrow End(A)
  			\end{align*}
  and four bilinear maps 
  \begin{align*} \varpi_1,\varpi_2: V\times V \longrightarrow A, \ \ \ \succ_V,\prec_V: V \times V \longrightarrow V.
  		\end{align*}
  		Denote by $A\natural V $ the vector space $ A\oplus V$ 
  with the bilinear maps $\succeq,~\preceq: (A\oplus V)\times(A \oplus V) \longrightarrow A \oplus V$ 
  given as follows:
  \begin{align}
    	&(x,a)\succeq (y,b)=(x\succ y+\rho_{\succ}(a)y+\mu_{\succ}(b)x+\varpi_1(a,b),l_{\succ}(a)y+r_{\succ}(b)x+a\succ_Vb),\label{SE1}\\
    &(x,a)\preceq(y,b)=(x\prec y+\rho_{\prec}(a)y+\mu_{\prec}(b)x+\varpi_2(a,b),l_{\prec}(a)y+r_{\prec}(b)x+a\prec_Vb),\label{SE2}
    \end{align}
  for all $x,y\in A$ and $a,b\in V$.
 The object $A\natural V $ is called the \textit{unified product} of $(A,\succ,\prec)$ and $V$ if 
  $(A\oplus V,\succeq,\preceq)$ is an anti-dendriform algebra. 
  In this case, the extending datum $\Omega(A, V) $ is called an extending structure of
   $(A,\succ,\prec)$ through $V$.
  Denote by $\mathcal{A}(A, V)$ the set of all extending structures of $(A,\succ,\prec)$ through $V$.
  	\end{defi}
  
  \begin{pro}\label{U1} Let $(A,\succ,\prec)$ be an anti-dendriform algebra, $V$ be a vector space and $\Omega(A,V)=(l_{\succ},r_{\succ},l_{\prec},r_{\prec},\rho_{\succ},\mu_{\succ},\rho_{\prec},\mu_{\prec},
    \varpi_1,\varpi_2,\succ_V,\prec_V)$ be an \textit{extending datum of
   $(A,\succ,\prec)$
  through $V$}. Then $A\natural V$ is a unified product of $(A,\succ,\prec)$ and $V$ if 
  and only if the following conditions hold for all $x,y\in A$ and $a,b,c\in V$,
  \begin{align} \label{S1} &\text{$(l_{\succ},r_{\succ},l_{\prec},r_{\prec})$ is a representation of $(A,\succ,\prec)$ on $V$,}\end{align}
 \begin{align} \label{S2} x\succ(\mu_{\succ}(a)y)+\mu_{\succ}(l_{\succ}(y)a)x=\mu_{\prec}(a)(x\prec y)=-\mu_{\succ}(a)(x\cdot y)
  =-x\prec(\mu(a)y)-\mu_{\prec}(l_{\cdot}(y)a)x, \end{align}
   \begin{align} \label{S3}&x\succ(\rho_{\succ}(a)y)+\mu_{\succ}(r_{\succ}(y)a)x=(\mu_{\prec}(a)x)\prec y+\rho_{\prec}(l_{\prec}(x)a)y\\
   \notag=&-(\mu(a)x)\succ y-\rho_{\prec}(l_{\cdot}(x)a)y=-x\prec(\rho(a)y)-\mu_{\prec}(r_{\cdot}(y)a)x,\end{align}
   \begin{align} \label{S4}&\rho_{\succ}(a)(x\succ y)=(\rho_{\prec}(a)x)\prec y+\rho_{\prec}(r_{\prec}(x)a)y\\
  \notag=&-\rho_{\prec}(x\cdot y)=-(\rho(a)x)\succ y-\rho_{\succ}(r(x)a)y,\end{align}
   \begin{align} \label{S5} &x\succ \varpi_1(a,b)+\mu_{\succ}(a\succ_Vb)x=\varpi_1(l_{\prec}(x)a,b)+\mu_{\prec}(b)\mu_{\prec}(a)x\\
  \notag=&-\varpi_1(l_{\cdot}(x)a,b)-\mu_{\succ}(b)\mu(a)x
  =-x\prec \varpi(a,b)-\mu_{\prec}(a\cdot_Vb) x,\end{align}
   \begin{align}\label{S6}&l_{\succ}(x) (a\succ_V b)=l_{\prec}(\mu_{\prec}(a)x)b+(l_{\prec}(x)a)\succ_Vb\\
   \notag=&-l_{\succ}(\mu(a)x)b-(l_{\cdot}(x)a)\succ_Vb=-l_{\prec}(x)(a\cdot_V b),\end{align}
   \begin{align} \label{S7}&\rho_{\succ}(a)\mu_{\succ}(b)x+\varpi_1(a,l_{\succ}(x)b)=\mu_{\prec}(b)\rho_{\prec}(a)x+\varpi_2(r_{\prec}(x)a,b)\\
\notag=&-\mu_{\succ}(b)\rho(a)x-\varpi_1(r_{\cdot}(x)a,b) =-\rho_{\prec}(a)\mu(b)x-\varpi_2(a,l_{\cdot}(x)b),\end{align}
   \begin{align}\label{S8}&r_{\succ}(\mu_{\succ}(b)x)a+a\succ_V(l_{\succ}(x)b)=(r_{\prec}(x)a)\prec_Vb+l_{\prec}(\rho_{\prec}(a)x)b
   \\\notag=&-(r_{\cdot}(x)a)\succ_Vb-l_{\succ}(\rho(a)x)b=-a\prec_V(l_{\cdot}(x)b)-r_{\prec}(\mu(b)x)a,\end{align}
    \begin{align}\label{S9}&\rho_{\succ}(a)\rho_{\succ}(b)x+\varpi_1(a,r_{\succ}(x)b)=\varpi_2(a,b)\prec x+\rho_{\prec}(a\prec_Vb)x
   \\\notag=&-\varpi(a,b)\succ x-\rho_{\succ}(a\cdot_Vb)x=-\varpi_2(a,r_{\cdot}(x)b)-\rho_{\prec}(a)\rho(b)x,\end{align}
   \begin{align}\label{S10}&r_{\succ}(\rho_{\succ}(b)x)a+a\succ_V(r_{\succ}(x)b)=r_{\prec}(x)(a\prec_V b)
   \\\notag=&-r_{\succ}(x)a\cdot_V b)=-r_{\prec}(\rho(b)x)a-a\prec_V(r_{\cdot}(x)b),\end{align}
   \begin{align}\label{S11} &\rho_{\succ}(a)\varpi_1(b,c)+\varpi_1(a,b \succ_Vc)=\mu_{\prec}(c)\varpi_2(a,b)+\varpi_2(a\prec_Vb,c)\\
  \notag=&-\rho_{\prec}(a)\varpi(b,c)-\varpi_2(a,b\cdot_Vc)=-\mu_{\succ}(c)\varpi(a,b)-\varpi_{1}(a\cdot_Vb,c),\end{align}
  \begin{align}\label{S12}&r_{\succ}(\varpi_1(b,c))a+a\succ_V(b\succ_Vc)=l_{\prec}(\varpi_2(a,b))c+(a\prec_Vb)\prec_Vc\\
  \notag=&-r_{\prec}(\varpi(b,c))a-a\prec_V(b\cdot _Vc)=-l_{\succ}(\varpi(a,b))c-(a\cdot_Vb)\succ_vc,\end{align}
\begin{align}\label{S13}\mu_{\prec}(a)(x\succ y)=x\succ(\mu_{\prec}(a)y)+\mu_{\prec}(l_{\prec}(y)a)x,\end{align}
\begin{align}\label{S14}(\mu_{\succ}(a)x)\prec y+\rho_{\prec}(l_{\succ}(x)a)y=x\succ(\rho_{\prec}(a)y)+\mu_{\succ}(r_{\prec}(y)a)x,\end{align}
\begin{align}\label{S15}(\rho_{\succ}(a)x)\prec y+\rho_{\prec}(r_{\succ}(x)a)y=\rho_{\succ}(a)(x\prec y),\end{align}
\begin{align}\label{S16}\mu_{\prec}(c)\varpi_1(a,b)+\varpi_2(a\succ_v b,c)=\rho_{\succ}(a)\varpi_2(b,c)+\varpi_1(a,b\prec_Vc),\end{align}
\begin{align}\label{S17} l_{\prec}(\varpi_1(a,b))c+(a\succ_Vb)\prec_Vc=r_{\succ}(\varpi_2(b,c))a+a\succ_V(b\prec_Vc),\end{align}
where $\cdot=\succ+\prec,~\cdot_V=\succ_V+\prec_V,~\varpi(a,b)=\varpi_1(a,b)+\varpi_2(a,b)$ 
and $ l_{\cdot}=l_{\succ}+l_{\prec},~r_{\cdot}=r_{\succ}+r_{\prec},~\rho_{\cdot}=\rho_{\succ}+\rho_{\prec},~\mu_{\cdot}=\mu_{\succ}+\mu_{\prec}$.
  \end{pro}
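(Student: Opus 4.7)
The plan is to verify both directions simultaneously by expanding the four defining identities of an anti-dendriform algebra on triples $X=(x,a)$, $Y=(y,b)$, $Z=(z,c)$ in $A\oplus V$, using the formulas (\ref{SE1}) and (\ref{SE2}), and reading off the component equations. Concretely, for each of
\begin{align*}
X\succeq(Y\succeq Z) &= -(X\cdot Y)\succeq Z,\\
-(X\cdot Y)\succeq Z &= -X\preceq(Y\cdot Z),\\
-X\preceq(Y\cdot Z) &= (X\preceq Y)\preceq Z,\\
(X\succeq Y)\preceq Z &= X\succeq(Y\preceq Z),
\end{align*}
I expand both sides and split the result into an $A$-component and a $V$-component. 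Since the resulting relations must hold identically in the six arguments, the independent constraints are isolated by prescribing the ``type'' $(*,*,*)\in\{A,V\}^3$ of the triple, i.e.\ setting the components in the complementary summand to zero one case at a time.

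The bookkeeping then splits naturally. The pure $AAA$-case recovers the anti-dendriform axioms of $A$ itself, which hold by hypothesis. The mixed cases involving one or two $V$-entries yield the hybrid compatibility conditions (\ref{S2})--(\ref{S10}) together with (\ref{S13})--(\ref{S15}), which govern the interaction between the actions $l_\succ,r_\succ,l_\prec,r_\prec$ and the coactions $\rho_\succ,\mu_\succ,\rho_\prec,\mu_\prec$, as well as the compatibility of $\varpi_1,\varpi_2$ with them; in particular the representation condition (\ref{S1}) emerges as the $V$-component of those triples whose sole $V$-entry is acted on by two $A$-entries. The pure $VVV$-case forces $(V,\succ_V,\prec_V)$ to satisfy the anti-dendriform axioms on $V$ and delivers the cocycle-type relations (\ref{S11}), (\ref{S12}), (\ref{S16}), (\ref{S17}) constraining the $\varpi_i$. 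Conversely, once (\ref{S1})--(\ref{S17}) are all assumed, each of the four axioms is reassembled by summing the contributions from every type.

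The main obstacle is purely combinatorial rather than conceptual: identity (\ref{A1}) alone is a four-term chain of equalities, and each component expansion can generate up to eight monomial terms. To keep the argument manageable I exploit the fact that the chain structure of the bundled conditions (\ref{S2})--(\ref{S12}) mirrors exactly the chain structure of (\ref{A1}): the four expressions in each such condition correspond, in order, to the four expressions $x\succ(y\succ z)$, $-(x\cdot y)\succ z$, $-x\prec(y\cdot z)$, $(x\prec y)\prec z$ in (\ref{A1}). Axiom (\ref{A2}) is simpler and produces the shorter identities (\ref{S13})--(\ref{S17}); beginning with it fixes conventions before handling the longer (\ref{A1})-type bookkeeping, and also serves as a useful sanity check on the expansion of (\ref{SE1}), (\ref{SE2}).
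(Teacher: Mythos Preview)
Your plan is exactly the paper's: write the two anti-dendriform axioms (\ref{A1}) and (\ref{A2}) for generic triples in $A\oplus V$, project onto the $A$- and $V$-components, and sort by type in $\{A,V\}^3$; the paper's proof is nothing more than the itemized table of which type and which component yields which of (\ref{S1})--(\ref{S17}) (together with the representation identities (\ref{R1})--(\ref{R6})).

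One correction to your description: the pure $VVV$-case does \emph{not} force $(V,\succ_V,\prec_V)$ to be an anti-dendriform algebra in its own right. Its $V$-component produces (\ref{S12}) and (\ref{S17}), in which the would-be anti-dendriform identities for $V$ appear only together with correction terms such as $r_\succ(\varpi_1(b,c))a$, $l_\prec(\varpi_2(a,b))c$, $l_\succ(\varpi(a,b))c$, $r_\prec(\varpi(b,c))a$. These vanish precisely when $\varpi_1=\varpi_2=0$, which is the matched-pair specialization treated later in Section~5; in the general unified product $(V,\succ_V,\prec_V)$ need not satisfy (\ref{A1})--(\ref{A2}).
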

  
\begin{proof} The object $A\natural V$ is a unified product if and only if 
  the following conditions hold for all $x,y,z\in A$ and $a,b,c\in V$:
  \begin{align}\label{S18}&(x,a)\succeq( (y,b)\succeq (z,c))=-((x,a)\bullet (y,b))\succeq (z,c)
  \\=&-(x,a)\preceq ((y,b)\bullet (z,c))=((x,a)\preceq (y,b))\preceq (z,c),\end{align} 
 \begin{equation}\label{S19}((x,a)\succeq (y,b))\preceq (z,c)=(x,a)\succeq ((y,b)\preceq (z,c)),\end{equation} 
 where $\bullet=\succeq+\preceq$.
In fact, for the Eq. \eqref{S18}, we consider it by the following cases:

(i) \eqref{S18} holds for the triple $((x,0),(y,0),(0,a))$ if and only if \eqref{S2} and \eqref{R1} hold.

(ii) \eqref{S18} holds for the triple $((x,0),(0,a),(y,0))$ if and only if \eqref{S3} and \eqref{R3} hold.

(iii) \eqref{S18} holds for the triple $((0,a),(x,0),(y,0))$ if and only if \eqref{S4} and \eqref{R2} hold.

(iv) \eqref{S18} holds for the triple $((x,0),(0,a),(0,b))$ if and only if \eqref{S5} and \eqref{S6} hold.

(v) \eqref{S18} holds for the triple $((0,a),(x,0),(0,b))$ if and only if \eqref{S7} and \eqref{S8} hold.

(vi) \eqref{S18} holds for the triple $((0,a),(0,b),(x,0))$ if and only if \eqref{S9} and \eqref{S10} hold.

(vii) \eqref{S18} holds for the triple $((0,a),(0,b),(0,c))$ if and only if \eqref{S11} and \eqref{S12} hold.
  
For the Eq.\eqref{S19}, we consider it by the following cases: 

(i) \eqref{S19} holds for the triple $((x,0),(y,0),(0,a))$ if and only if \eqref{S13} and \eqref{R4} hold.

(ii) \eqref{S19} holds for the triple $((x,0),(0,a),(y,0))$ if and only if \eqref{S14} and \eqref{R5} hold.

(iii) \eqref{S19} holds for the triple $((0,a),(x,0),(y,0))$ if and only if \eqref{S15} and \eqref{R6} hold.

(iv) \eqref{S19} holds for the triple $((0,a),(0,b),(0,c))$ if and only if \eqref{S16} and \eqref{S17} hold.

 By direct calculation, we can verify that the above statements hold.
  \end{proof}
  
  Since $(A\natural V,\succeq,\preceq)$ is an anti-dendriform algebra, $(A\natural V,\bullet)$ is an associative algebra.
  Then
 \begin{rmk} \label{U3}
Let $\Omega(A, V)=(l_{\succ},r_{\succ},l_{\prec},r_{\prec},\rho_{\succ},\mu_{\succ},\rho_{\prec},\mu_{\prec},
   \varpi_1,\varpi_2,\succ_V,\prec_V) $ be an extending structure of
   $(A,\succ,\prec)$ through $V$. Then $\Omega(A, V)=(l_{\succ}+l_{\prec},r_{\succ}+r_{\prec},\rho_{\succ}+\rho_{\prec},\mu_{\succ}+\mu_{\prec},
    \varpi_1+\varpi_2,\succ_V+\prec_V) $ is an extending structure of $(A,\cdot)$ through $V$.
 \end{rmk}

\begin{thm}\label{U2} 
    Let $(A,\succ,\prec)$ be an anti-dendriform algebra, $E$ a vector space containing $A$ 
  as a subspace and $(E,\succeq,\preceq)$ an anti-dendriform algebra such that $A$ 
  is an anti-dendriform subalgebra of $E$. Then there is an \textit{extending structure} $\Omega(A,V)=(l_{\succ},r_{\succ},l_{\prec},r_{\prec},\rho_{\succ},\mu_{\succ},\rho_{\prec},\mu_{\prec},
    \varpi_1,\varpi_2,\succ_V,\prec_V)$ 
  of $A$ through a subspace $V$ of $E$ and an isomorphism of anti-dendriform algebras 
  $(E,\succeq,\preceq) \cong A\natural V$ that stabilizes $A$ and co-stabilizes $V$.
  \end{thm}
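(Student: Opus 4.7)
The plan is to choose any vector space complement $V$ of $A$ in $E$, so that $E = A \oplus V$ as vector spaces, and then \emph{define} the components of an extending datum $\Omega(A,V)$ by projecting the anti-dendriform operations of $E$ onto the summands $A$ and $V$. Since $A$ is an anti-dendriform subalgebra, the operations restricted to $A$ coincide with $(\succ, \prec)$, so the data to be constructed are exactly the mixed pieces and the ``on $V$'' pieces.

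Let $p: E \to A$ and $q: E \to V$ denote the projections associated with the decomposition $E = A \oplus V$. For $x \in A$ and $a, b \in V$, I define
\begin{align*}
\rho_\succ(a)x &:= p(a \succeq x), & l_\succ(a)x &:= q(a \succeq x), \\
\mu_\succ(a)x &:= p(x \succeq a), & r_\succ(a)x &:= q(x \succeq a), \\
\varpi_1(a,b) &:= p(a \succeq b), & a \succ_V b &:= q(a \succeq b),
\end{align*}
and analogously $\rho_\prec, l_\prec, \mu_\prec, r_\prec, \varpi_2, \prec_V$ from $\preceq$. Now define the linear isomorphism $\varphi: E \to A \natural V$ by $\varphi(x + a) = (x, a)$ for $x \in A$, $a \in V$; this is bijective because $E = A \oplus V$.

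By the bilinearity of $\succeq$ on $E$ together with the definitions above,
\begin{align*}
(x+a) \succeq (y+b) = x \succ y + \rho_\succ(a)y + \mu_\succ(b)x + \varpi_1(a,b) + l_\succ(a)y + r_\succ(b)x + a \succ_V b,
\end{align*}
whose decomposition into $A \oplus V$ is exactly what \eqref{SE1} prescribes for $(x,a) \succeq (y,b)$ in $A \natural V$; an identical computation handles $\preceq$ and \eqref{SE2}. Hence $\varphi$ intertwines the bilinear operations of $E$ with those of $A \natural V$. Since $(E, \succeq, \preceq)$ is anti-dendriform, the transported structure on $A \natural V$ is anti-dendriform as well; by Proposition~\ref{U1} this is equivalent to $\Omega(A,V)$ satisfying all the axioms \eqref{S1}--\eqref{S17}, so $\Omega(A,V)$ is genuinely an extending structure. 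Finally, $\varphi(x) = (x,0)$ for $x \in A$ shows that $\varphi$ stabilizes $A$, and $\varphi(a) = (0,a)$ for $a \in V$ shows it co-stabilizes $V$ in the sense of diagram~\eqref{Ee}.

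The only potential pitfall is bookkeeping: one must verify carefully that expanding $(x+a) \succeq (y+b)$ in $E$ and then splitting into $A$- and $V$-components yields precisely the four groups of terms in \eqref{SE1}. There is no conceptual obstacle, since all twelve compatibility axioms \eqref{S1}--\eqref{S17} are automatic consequences of $(E, \succeq, \preceq)$ already being an anti-dendriform algebra, routed through Proposition~\ref{U1}.
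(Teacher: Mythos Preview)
Your argument is correct and follows essentially the same route as the paper: pick a complement $V$, use the two projections to define all twelve pieces of the extending datum, and check that the obvious linear isomorphism $E\cong A\oplus V$ intertwines $(\succeq,\preceq)$ with the unified-product formulas, so that Proposition~\ref{U1} forces $\Omega(A,V)$ to be an extending structure. The only cosmetic discrepancy is that your labeling $l_\succ(a)x:=q(a\succeq x)$, $r_\succ(a)x:=q(x\succeq a)$ swaps the roles of $l_\succ$ and $r_\succ$ relative to the paper's declared types $l_\succ,r_\succ:A\to\mathrm{End}(V)$ (compare your formulas with the paper's \eqref{S20}); this is harmless for the proof but worth aligning with the paper's conventions.
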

  
  \begin{proof} Clearly,
  there is a linear map $p : E \longrightarrow A$ such that $p(x)=x$ for all $x \in A$. 
  Then $V=\mathrm{ker}(p)$ is a subspace of $E$, which is a complement of $A$ in $E$. Define 
 the extending datum $\Omega(A,V)$ of $(A,\succ,\prec)$ through $V$ as follows:
 \begin{equation}\label{S20}
 	l_{\succ}(x)a=x\succeq a-p(x\succeq a),~r_{\succ}(x)a=a\succeq x-p(a\succeq x), \end{equation}
 \begin{equation}\label{S21}l_{\prec}(x)a=x\preceq a-p(x\preceq a),~r_{\prec}(x)a=a\preceq x-p(a\preceq x),~~\rho_{\succ}(a)x=p(a\succeq x), \end{equation}
 	 \begin{equation}\label{S22}\mu_{\succ}(a)x=p(x\succeq a),\ \ \rho_{\prec}(a)x=p(a\preceq x), 
 \ \ \mu_{\prec}(a)x=p(a\preceq x), \ \ \varpi_1(a,b)=p(a \succeq b) \end{equation}
 \begin{equation}\label{S23}\varpi_2(a,b)=p(a \preceq b),~~ a\succ_Vb=a\succeq b-p(a\succeq b),~~a\prec_Vb=a\preceq b-p(a\preceq b)\end{equation}
 for all $x,y\in A,~a,b\in V$. Next, we prove that
 the extending datum $\Omega(A, V) $ is an extending structure of $(A,\succ,\prec)$ through $V$.
 Define a linear isomorphism
 \begin{equation}\label{S24}\varphi:A\natural V\longrightarrow (E,\succeq,\preceq),~~~\varphi(x,a)=x+a,~ ~\forall~x\in A,~a\in V,\end{equation}
 whose inverse is  
  \begin{equation}\label{S25}\varphi^{-1}:(E,\succeq,\preceq)\longrightarrow A\natural V, ~~~~\varphi^{-1}(e)=(p(e),e-p(e)),~~\forall~e\in E.\end{equation}
  We shall verify that the anti-dendriform algebra structure $(\succeq,\preceq)$ that can be defined on $A\oplus V$ such that 
  $\varphi$ is an isomorphism of anti-dendriform algebras coincides with the one given by Eqs.\eqref{SE1}-\eqref{SE2}.
 Indeed, using Eqs.\eqref{S20}-\eqref{S23}, we get
  \begin{equation}\label{S26}
 	p(x\succeq y+x\succeq b+a\succeq y+a\succeq b)=x\succ y+\mu_{\succ}(b)x+\rho_{\succ}(a)y+\varpi_1(a,b),
 \end{equation}
  \begin{align}\label{S27}
 	&x\succeq y+x\succeq b+a\succeq y+a\succeq b-p(x\succeq y+x\succeq b+a\succeq y+a\succeq b)\\
\notag=&l_{\succ}(x)b+r_{\succ}(y)a+a\succ_Vb
 \end{align}
 for all $x,y\in A$ and $a,b\in V$.
Based on the Eqs. \eqref{S24}-\eqref{S27}, we have
 \begin{align*}
 	&(x,a)\succeq (y,b)=\varphi^{-1}(\varphi(x,a)\succeq \varphi(y,b))
 \\=&\varphi^{-1}((x+a)\succeq (y+b))=\varphi^{-1}(x\succeq y+x\succeq b+a\succeq y+a\succeq b)
 \\=&(x\succ y+\mu_{\succ}(b)x+\rho_{\succ}(a)y+\varpi_1(a,b),l_{\succ}(x)b+r_{\succ}(y)a+a\succ_Vb).
 \end{align*}
 Analogously, 
 \begin{equation*}(x,a)\preceq (y,b)=(x\prec y+\mu_{\prec}(b)x+\rho_{\prec}(a)y+\varpi_2(a,b),l_{\prec}(x)b+r_{\prec}(y)a+a\prec_Vb).\end{equation*}
In addition, it is easy to check that $\varphi$ stabilizes $A$ and co-stabilizes $V$.
The proof is finished.
  	 \end{proof}
  	 
 By Theorem \ref{U2}, any anti-dendriform algebra structure on a vector space $E$ containing $A$ 
 as an anti-dendriform subalgebra is isomorphic to a unified product of $A$ 
 through a given complement $V$ of $A$ in $E$. Hence the classification 
 of all anti-dendriform algebra structures on $E$ that contains $A$ as an 
 anti-dendriform subalgebra is equivalent to the classification of all 
 unified products $A\natural V$, associated to all extending structures $\Omega(A,V)$
  for a given complement $V$ of $A$ in $E$. 
  	
  In the sequel, we will classify all the unified products $A\natural V$.
  	 \begin{lem}\label{63}
  	 	Let $(A,\succ,\prec)$ be an anti-dendriform algebra, $V$ a vector space and $\Omega(A,V)=(l_{\succ},r_{\succ},l_{\prec},r_{\prec},\rho_{\succ},\mu_{\succ},\rho_{\prec},\mu_{\prec},
    \varpi_1,\varpi_2,\succ_V,\prec_V)$, 
    $\Omega(A,V)=(l_{\succ}',r_{\succ}',l_{\prec}',r_{\prec}',\rho_{\succ}',\mu_{\succ}',\rho_{\prec}',\mu_{\prec}',
    \varpi'_{1},\varpi'_{2},\succ'_V,\prec'_V)$
   be two extending structures of $(A,\succ,\prec)$ through $V$, 
   and $A\natural V,~A \natural' V$ are their associated unified products respectively.
   Denote by $\mathcal{H}$ the set of all homomorphisms of 
    anti-dendriform algebras $\psi: A\natural V\longrightarrow A \natural' V$ 
    which stabilizes $A$ in the following diagram:
     \begin{equation} \label {T1} \xymatrix{
  0 \ar[r] & A\ar@{=}[d] \ar[r]^{i} & A\natural V\ar[d]_{\psi} \ar[r]^{\pi_V} & V \ar@{=}[d] \ar[r] & 0\\
 0 \ar[r] & A \ar[r]^{i'} & A \natural' V \ar[r]^{\pi'_{V}} & V  \ar[r] & 0 .}\end{equation}
 and denote by $\mathcal{P}$ the set of 
    pairs $(\zeta,\eta)$, where $\zeta: V \longrightarrow A$,
     $\eta: V \longrightarrow V$ are linear maps and they satisfy the following conditions,
  	 	\begin{align}
  	 	&\eta(l_{\succ}(x)a)=l'_{\succ}(x)\eta(a),\ \ \ \eta(r_{\succ}(x)a)=r'_{\succ}(x)\eta(a),\label{h1}\\
  &\eta(l_{\prec}(x)a)=l'_{\prec}(x)\eta(a),\ \ \ \eta(r_{\prec}(x)a)=r'_{\prec}(x)\eta(a),\label{h2}\\
  	 	&\zeta(l_{\succ}(x)a)=x\succ \zeta(a)-\mu_{\succ}(a)x+\mu'_{\succ}(\eta(a))x,\label{h3}\\
  &\zeta(r_{\succ}(x)a)=\zeta(a)\succ x-\rho_{\succ}(a)x+\rho'_{\succ}(\eta(a))x,\label{h4}\\
  &\zeta(l_{\prec}(x)a)=x\prec \zeta(a)-\mu_{\prec}(x)a+\mu'_{\prec}(\eta(x))a,\label{h5}\\
    &\zeta(r_{\prec}(x)a)=\zeta(a)\prec x-\rho_{\prec}(a)x+\rho'_{\prec}(\eta(a))x,\label{h6}\\
  	 	&\eta (a\succ_Vb)=\eta(a)\succ'_V\eta(b)+l'_{\succ}(\zeta(a))\eta(b)+r'_{\succ}(\zeta(b))\eta(a),\label{h7}\\
  	 	&\zeta(a\succ_Vb)+\varpi_1(a,b)=\zeta(a)\succ\zeta(b)+\rho'_{\succ}(\eta(a))\zeta(b)+\mu'_{\succ}(\eta(b))\zeta(a)+\varpi_{1}'(\eta(a),\eta(b)),\label{h8}\\
  	 	&\eta (a\prec_Vb)=\eta(a)\prec'_V\eta(b)+l'_{\prec}(\zeta(a))\eta(b)+r'_{\prec}(\zeta(b))\eta(a),\label{h9}\\
  	 	&\zeta (a\prec_Vb)+\varpi_2(a,b)=\zeta(a)\prec\zeta(b)+\rho'_{\prec}(\eta(a))\zeta(b)+\mu'_{\prec}(\eta(b))\zeta(a)+\varpi_{2}'(\eta(a),\eta(b)).\label{h10}
  	 	\end{align}
  	 	 for any $x \in A$ and $a,b \in V$. Then there exists a bijection
	\begin{equation}\label{h11}\Gamma:\mathcal{P}\longrightarrow  \mathcal{H},~\Gamma (\zeta,\eta)=\psi_{(\zeta,\eta)},
\ \ \ \psi_{(\zeta,\eta)}(x,a)=(x+\zeta(a),\eta(a)),~\forall~x \in A, a\in V.\end{equation}
 Moreover,
the homomorphism $\psi=\psi_{(\zeta,\eta)}$ is an isomorphism if and only if $\eta$ is a bijection
 and $\psi=\psi_{(\zeta,\eta)}$ co-stabilizes $V$ if and only if $\eta=id_V$. 	 
 \end{lem}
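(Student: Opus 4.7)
The plan is to first parametrize all linear maps $\psi\colon A\natural V\to A\natural' V$ fitting into the left square of diagram \eqref{T1}, and then impose the anti-dendriform homomorphism conditions, matching them term-by-term with \eqref{h1}-\eqref{h10}. Since $A\natural V=A\oplus V$ as a vector space and the stabilization $\psi\circ i=i'$ forces $\psi(x,0)=(x,0)$ for all $x\in A$, any such $\psi$ is determined by its restriction to $\{0\}\oplus V$. Writing this restriction as $(0,a)\mapsto(\zeta(a),\eta(a))$ for unique linear maps $\zeta\colon V\to A$ and $\eta\colon V\to V$ gives $\psi=\psi_{(\zeta,\eta)}$ as in \eqref{h11}; conversely, every pair $(\zeta,\eta)$ produces an $A$-stabilizing linear $\psi_{(\zeta,\eta)}$. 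Thus $\Gamma$ is a bijection at the purely linear level, and the task reduces to identifying $\mathcal{P}$ as the subset of $\mathrm{Hom}(V,A)\times\mathrm{End}(V)$ on which $\psi_{(\zeta,\eta)}$ is additionally multiplicative for both $\succeq$ and $\preceq$.

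To carry out that identification, I would evaluate each of the equalities $\psi(u\succeq v)=\psi(u)\succeq'\psi(v)$ and $\psi(u\preceq v)=\psi(u)\preceq'\psi(v)$ on the four basic pair types $((x,0),(y,0))$, $((x,0),(0,a))$, $((0,a),(x,0))$ and $((0,a),(0,b))$, then extend by bilinearity. The first type is automatic because $\psi_{(\zeta,\eta)}$ is the identity on $A\oplus 0$ and the restrictions of $\succeq,\succeq'$ (resp.\ $\preceq,\preceq'$) to $A\times A$ both coincide with $\succ$ (resp.\ $\prec$) by \eqref{SE1}-\eqref{SE2}. For each of the remaining pair types, expanding via \eqref{SE1}-\eqref{SE2} and separating $A$- and $V$-components produces two equations per operation; a direct comparison shows that the $V$-components from $((x,0),(0,a))$ and $((0,a),(x,0))$ reproduce \eqref{h1}-\eqref{h2}, the corresponding $A$-components yield \eqref{h3}-\eqref{h6}, and the pair $((0,a),(0,b))$ contributes \eqref{h7}-\eqref{h10}. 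Since $(\zeta,\eta)$ can be recovered from $\psi_{(\zeta,\eta)}(0,a)$, $\Gamma$ restricts to a bijection between $\mathcal{P}$ and $\mathcal{H}$.

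For the addenda, suppose $\psi_{(\zeta,\eta)}(x,a)=(x+\zeta(a),\eta(a))=(0,0)$; then $\eta(a)=0$, so injectivity of $\eta$ forces $a=0$ and then $x=0$, while any nonzero vector in $\ker\eta$ yields a nontrivial kernel element of $\psi_{(\zeta,\eta)}$. Surjectivity of $\psi_{(\zeta,\eta)}$ is equivalent to surjectivity of $\eta$, since the $A$-component of a preimage of any $(y,b)$ can be corrected by subtracting $\zeta$ applied to a preimage of $b$. Hence $\psi_{(\zeta,\eta)}$ is an isomorphism if and only if $\eta$ is bijective. Co-stabilization of $V$ is commutativity of the right square of \eqref{T1}, i.e.\ $\pi'_V\circ\psi_{(\zeta,\eta)}=\pi_V$; evaluated at $(x,a)$ this reads $\eta(a)=a$, and is therefore equivalent to $\eta=\mathrm{id}_V$. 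The only real obstacle is the bookkeeping in the second paragraph: matching component equations to the ten labeled identities requires care but is otherwise mechanical once \eqref{SE1}-\eqref{SE2} are unpacked.
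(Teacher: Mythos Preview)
Your proposal is correct and follows essentially the same approach as the paper: parametrize $A$-stabilizing linear maps by pairs $(\zeta,\eta)$, then identify the homomorphism condition with \eqref{h1}--\eqref{h10} by evaluating \eqref{h12} on the four basic pair types using \eqref{SE1}--\eqref{SE2}, and finally read off the isomorphism and co-stabilization criteria from the form of $\psi_{(\zeta,\eta)}$. The only cosmetic difference is that the paper writes out the inverse $\psi_{(\zeta,\eta)}^{-1}(x,a)=(x-\zeta\eta^{-1}(a),\eta^{-1}(a))$ explicitly, whereas you argue injectivity and surjectivity of $\psi$ separately in terms of those of $\eta$.
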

 
\begin{proof}
First, we verify that $\Gamma$ is well-defined. For all $(\zeta,\eta)\in \mathcal{P}$ and $\psi=\psi_{(\zeta,\eta)}$, we have
\begin{equation*}\psi i(x)=\psi (x,0)=(x,0)=i'(x),~\forall~ x\in A,\end{equation*}
which means that $\psi$ stabilizes $A$. 
$\psi$ is a homomorphism of anti-dendriform algebras if and only if the following equation holds:
\begin{equation}\label{h12}\psi((x,a)\succeq (y,b))=\psi(x,a)\succeq' \psi(y,b),\ \ \ \psi((x,a)\preceq (y,b))=\psi(x,a)\preceq' \psi(y,b)\end{equation}
  for all $x,y \in A$ and $ a,b\in V$.
Using Eqs. \eqref{SE1}-\eqref{SE2} and \eqref{h11}, by direct computations, we get that
Eq. \eqref{h12} holds if and only if Eqs.\eqref{h1}-\eqref{h10} hold. Thus, $\Gamma$ is well-defined.

Second, we prove that $\Gamma$ is a bijection. It is obvious that $\Gamma$ is injective. 
For all $\psi\in  \mathcal{P}$, define linear maps
$\zeta_\psi:V\longrightarrow A$ and $\eta_\psi:V\longrightarrow V$ respectively by
\begin{equation}\zeta_\psi(a)=\pi_{A}\psi (0,a),~~\eta_\psi(a)=\pi'_{V}\psi (0,a),~\forall~a\in V.\end{equation}
Then \begin{align*}&\Gamma(\zeta_\psi,\eta_\psi)(x,a)=\psi_{(\zeta_\psi,\eta_\psi)}(x,a)\\=&(x+\pi_{A}\psi(0,a),\pi_{V}^{'}\psi(0,a))
\\=&\psi(x,0)+i'\pi_{A}\psi(0,a)+i_{V}\pi_{V}^{'}\psi(0,a)\\=&\psi(x,0)+(i'\pi_{A}+i_{V}\pi_{V}^{'})\psi(0,a)\\=&\psi(x,0)+\psi(0,a)
\\=&\psi(x,a),\end{align*}
which implies that $\Gamma$ is surjective. In all, $\Gamma$ is a bijection.
Suppose that $\eta:V\longrightarrow V$ is bijective. Then $\psi=\psi_{(\zeta,\eta)}$ is
 an isomorphism of anti-dendriform algebras
 with the inverse defined by
 $\psi^{-1}=\psi_{(\zeta,\eta)}^{-1}(x,a)=(x-\zeta\eta^{-1}(a),\eta^{-1}(a))$ for all $x\in A$ and $a\in V$.
On the other hand, assume that $\psi=\psi_{(\zeta,\eta)}$ is a bijection. Obviously, $\eta$ is surjective. 
Assume that $\eta(a)=0$, then
$\psi_{(\zeta,\eta)}(-\zeta(a),a)=(\zeta(a)-\zeta(a),\eta(a))=(0,0)$.
Since $\psi_{(\zeta,\eta)}$ is bijective, $a=0$. Thus, $\eta$ is injective. Therefore, $\eta$ is bijective.
Finally, it is obvious that $\psi=\psi_{(\zeta,\eta)}$ co-stabilizes $V$ if and only if $\eta=id_V$. 
 \end{proof}
  
\begin{defi}\label{01}
  	  Let $(A,\succ,\prec)$ be an anti-dendriform algebra, $V$ a vector space and $\Omega(A,V)=(l_{\succ},r_{\succ},l_{\prec},r_{\prec},\rho_{\succ},\mu_{\succ},\rho_{\prec},\mu_{\prec},
    \varpi_1,\varpi_2,\succ_V,\prec_V)$, 
    $\Omega(A,V)=(l_{\succ}',r_{\succ}',l_{\prec}',r_{\prec}',\rho_{\succ}',\mu_{\succ}',\rho_{\prec}',\mu_{\prec}',
    \varpi_{1}',\varpi_{2}',\succ'_V,\prec'_V)$
   be two extending structures of $(A,\succ,\prec)$ through $V$.
 If there is a pair $(\zeta,\eta)$ of linear maps, 
 where $\eta:V\longrightarrow V$ is a bijection and $\zeta:V\longrightarrow A$, satisfying 
  Eqs. \eqref{h1}-\eqref{h10}, then
  $\Omega(A,V)$ and $\Omega'(A,V)$ are called {\bf equivalent} and denote it by 
  $\Omega(A, V) \equiv \Omega'(A, V)$.
  Moreover, if $\eta=id_{V}$, $\Omega(A, V)$ and $ \Omega'(A, V)$ are called {\bf cohomologous}, which is 
denoted by $\Omega(A, V) \approx \Omega'(A, V)$.
  	\end{defi}
  
Now, we are ready to provide an answer for the ES-problem of anti-dendriform algebra by the following Theorem.
  
  	\begin{thm}\label{13}
  		Let $(A,\succ,\prec)$ be an anti-dendriform algebra, $E$ a vector space containing $A$ as a subspace and $V$ a complement of $A$ in $E$. 
  		\begin{enumerate}
  			\item Denote  $H^2_{p}(V, A)= \mathcal{A}(A, V)/ \equiv$. The map
  		\begin{align*}
  			\Phi:H^2_{p}(V, A) \longrightarrow EXT(E,A), \quad \overline{\Omega(A, V)} \longmapsto \overline{A \natural  V}
  		\end{align*}
  		is a bijection, where $\overline{\Omega(A, V)}$ and $\overline{A \natural  V}$ 
  are the equivalent class of $\Omega(A, V)$ and $A \natural  V$ under $\equiv$ respectively. 
  	\item Denote $H^2(V, A)=\mathcal{A}(A, V) / \approx$. The map
  		\begin{align*} 
  			\Psi:H^2(V,A) \longrightarrow EXT'(E,A), \quad [\Omega(A, V)] \longmapsto [A \natural  V]
  		\end{align*}
  		is a bijection, where $[\Omega(A, V)]$ and $[A \natural  V]$ 
  are the equivalent classes of $\Omega(A, V)$ and $A \natural  V$ under $\approx$ respectively,
  where $EXT(E,A)=\mathcal{G}(E,A)/\equiv$ and $EXT'(E,A)=\mathcal{G}'(E,A)/\approx$ are introduced in the Introduction.

  \end{enumerate}
  	\end{thm}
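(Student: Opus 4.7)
The plan is to reduce both parts of the theorem to the combination of Theorem \ref{U2} and Lemma \ref{63}, which together encode all the content: Theorem \ref{U2} says that unified products exhaust (up to the prescribed equivalence) the anti-dendriform structures on $E$ containing $A$ as a subalgebra, while Lemma \ref{63} translates morphisms of unified products into pairs $(\zeta,\eta)$ of linear maps satisfying \eqref{h1}--\eqref{h10}. Surjectivity of $\Phi$ and $\Psi$ is immediate from Theorem \ref{U2}: given any anti-dendriform structure $(E,\succeq,\preceq)$ having $A$ as a subalgebra, one chooses a linear retraction $p\colon E\to A$, sets $V=\mathrm{ker}(p)$, and constructs via \eqref{S20}--\eqref{S23} an extending structure $\Omega(A,V)$ together with an isomorphism $(E,\succeq,\preceq)\cong A\natural V$ that stabilizes $A$ and co-stabilizes $V$. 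Hence the class of $(E,\succeq,\preceq)$ in $EXT(E,A)$, respectively in $EXT'(E,A)$, is exactly $\Phi(\overline{\Omega(A,V)})$, respectively $\Psi([\Omega(A,V)])$.

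For well-definedness, suppose $\Omega(A,V)\equiv\Omega'(A,V)$ and pick a witnessing pair $(\zeta,\eta)$ with $\eta\colon V\to V$ bijective satisfying \eqref{h1}--\eqref{h10}. By the bijection $\Gamma$ of Lemma \ref{63}, $\psi_{(\zeta,\eta)}\colon A\natural V\to A\natural' V$ is an anti-dendriform algebra homomorphism that stabilizes $A$, and the final clause of Lemma \ref{63} guarantees it is an isomorphism because $\eta$ is bijective. Thus $A\natural V\equiv A\natural' V$ in $\mathcal{G}(E,A)$, so $\Phi$ is well defined. If moreover $\eta=\mathrm{id}_V$, the same lemma ensures that $\psi_{(\zeta,\eta)}$ also co-stabilizes $V$, so $A\natural V\approx A\natural' V$ in $\mathcal{G}'(E,A)$, giving the well-definedness of $\Psi$.

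Injectivity runs Lemma \ref{63} in the opposite direction. Assume $\overline{A\natural V}=\overline{A\natural' V}$ in $EXT(E,A)$ and fix an anti-dendriform isomorphism $\psi\colon A\natural V\to A\natural' V$ that stabilizes $A$. By the surjectivity of $\Gamma$, $\psi=\psi_{(\zeta,\eta)}$ for a unique pair $(\zeta,\eta)$ whose components satisfy \eqref{h1}--\eqref{h10}; the final clause of Lemma \ref{63} forces $\eta$ to be a bijection since $\psi$ is, which is precisely the data of an equivalence $\Omega(A,V)\equiv\Omega'(A,V)$. Hence $\Phi$ is injective. The argument for $\Psi$ is identical: the extra requirement that $\psi$ co-stabilize $V$ forces $\eta=\mathrm{id}_V$, yielding $\Omega(A,V)\approx\Omega'(A,V)$.

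No new obstacle is expected: the coordinate-change computations verifying that $\psi_{(\zeta,\eta)}$ respects $\succeq$ and $\preceq$ if and only if \eqref{h1}--\eqref{h10} hold were already performed inside Lemma \ref{63}, and the existence part of the correspondence was settled in Theorem \ref{U2}. The only routine bookkeeping needed is to check that the extending structure built from $(E,\succeq,\preceq)$ in Theorem \ref{U2} via a different retraction $p'$ produces a $\approx$-cohomologous (in particular $\equiv$-equivalent) extending structure; this is already subsumed by the bijectivity of $\Phi$ and $\Psi$ just established, so the two statements follow.
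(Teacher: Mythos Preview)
Your proof is correct and follows essentially the same approach as the paper, which simply cites Proposition \ref{U1}, Theorem \ref{U2}, and Lemma \ref{63} in a one-line proof. You have spelled out in detail how Theorem \ref{U2} gives surjectivity and how the bijection $\Gamma$ of Lemma \ref{63} (together with its final clause characterizing when $\psi_{(\zeta,\eta)}$ is an isomorphism or co-stabilizes $V$) handles well-definedness and injectivity, which is exactly what the paper leaves to the reader.
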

  	\begin{proof}
The statements can be verified by Proposition \ref{U1}, Theorem \ref{U2} and  Lemma \ref{63} .

  	 	\end{proof}

  \section{Crossed products, the GE-problem and non-abelian extensions}
 In the section, we first introduce the notion of 
  crossed products for anti-dendriform algebras, which
   is a special case of the unified products. Moreover, we give an answer to the GE-problem, which is a categorical dual
   of the ES-problem. Finally, we explore the inducibility problem of pairs of automorphisms associated with 
    non-abelian extensions of anti-dendriform algebras
and derive the Wells sequences under the context of anti-dendriform algebras.
  
  \begin{defi}
  		Let $(A,\succ,\prec)$ be an anti-dendriform algebra and $V$ a vector space. 
  A \textbf{pre-crossed datum of $(A,\succ,\prec)$ through $V$}
    is a system $C(A,V)=(l_{\succ},r_{\succ},l_{\prec},r_{\prec},\omega_1,\omega_2,\succ_V,\prec_V)$ 
    consisting of four linear maps 
  		\begin{align*}
  			l_{\succ},r_{\succ},l_{\prec},r_{\prec}: A \longrightarrow End(V), 
  			\end{align*}
  and four bilinear maps 
  \begin{align*} \omega_1,\omega_2: A\times A \longrightarrow V, \ \  \ \succ_V,\prec_V: V \times V \longrightarrow V.
  		\end{align*}
  		Denote by $A\sharp V $ the vector space $ A\oplus V$ 
  with the bilinear maps $\succeq,~\preceq: (A\oplus V)\times(A \oplus V) \longrightarrow A \oplus V$ 
  given as follows:
  \begin{align}
    	&(x,a)\succeq (y,b)=(x\succ y,\omega_1(x,y)+l_{\succ}(x)b+r_{\succ}(y)a+a\succ_{V}b),\label{A3}\\
    &(x,a)\preceq(y,b)=(x\prec y,\omega_2(x,y)+l_{\prec}(x)b+r_{\prec}(y)a+a\prec_{V}b),\label{A4}
    \end{align}
  for all $x,y\in A$ and $a,b\in V$.
 The object $A\sharp V $ is called the \textit{crossed product} of $(A,\succ,\prec)$ if 
  $(A\oplus V,\succeq,\preceq)$ is an anti-dendriform algebra. 
  In this case, the pre-crossed datum $C(A, V) $ is called a crossed system of
   $(A,\succ,\prec)$ through $V$. $\omega_1$ and $\omega_2$ are called cocycle of $C(A, V) $.
  Denote by $GC(A, V)$ the set of all crossed systems of $(A,\succ,\prec)$ through $V$.
  	\end{defi}
  
  \begin{pro}\label{CU1} Let $(A,\succ,\prec)$ be an anti-dendriform algebra, $V$ be a vector space and 
  $C(A,V)=(l_{\succ},r_{\succ},l_{\prec},r_{\prec},\omega_1,\omega_2,\succ_{V},\prec_{V})$ be a pre-crossed datum 
   of $(A,\succ,\prec)$
  through $V$. Then $A\sharp V$ is a crossed product of $(A,\succ,\prec)$ and $V$ if 
  and only if the following conditions hold for all $x,y,z\in A$ and $a,b\in V$,
 \begin{align}\label{C1}&l_{\succ}(x)\omega_{1}(y,z)a+\omega_1(x,y\succ z)=-r_{\succ}(z)\omega(x,y)-\omega_{1}(x\cdot y,z)
   \\\notag=&-l_{\prec}(x)\omega(y,z)-\omega_{2}(x,y\cdot z)=r_{\prec}(z)\omega_2(x,y)+\omega_{2}(x\prec y, z),\end{align}
 \begin{align} \label{C2} l_{\succ}(x)l_{\succ}(y)a=l_{\prec}(x\prec y)a+\omega_{2}(x,y)\prec_{V}a=
 -l_{\succ}(x\cdot y)a-\omega(x,y)\succ_{V}a=-l_{\prec}(x)l_{\cdot}(y)a, \end{align}
   \begin{align} \label{C3}l_{\succ}(x)r_{\succ}(y)a=r_{\prec}(y)l_{\prec}(x)a=
 -r_{\succ}(y)l_{\cdot} (x)a=-l_{\prec}(x)r_{\cdot}(y)a,\end{align}
   \begin{align} \label{C4}r_{\succ}(x\succ y)a+a\succ_{V}\omega_1(x,y)=r_{\prec}(y)r_{\prec} (x)a
   =-r_{\prec}(x\cdot y)a-a\succ_{V}\omega(x,y)=-r_{\succ}(y)r_{\cdot} (x)a,\end{align}
   \begin{align} \label{C5} l_{\succ}(x)( a\succ_{V}b)=(l_{\prec}(x)a)\prec_{V}b=-(l_{\cdot}(x)a)\succ_{V}b=-l_{\prec}(x)(a\cdot_{V}b),\end{align}
   \begin{align}\label{C6}a\succ_{V}(l_{\succ}(x)b)=(r_{\prec}(x)a)\prec_{V}b=-(r_{\cdot}(x)a)\succ_{V}b=-a\prec_{V}(l_{\cdot}(x)b)
   ,\end{align}
   \begin{align} \label{C7}a\succ_{V}(r_{\succ}(x)b)=r_{\prec}(x)(a\prec_{V}b)=-r_{\succ}(x)(a\cdot_{V}b)=-a\prec_{V}(r_{\cdot}(x)b),\end{align}
    \begin{align}\label{C8}r_{\prec}(z)\omega_1(x,y)+\omega_2(x\succ y,z)=l_{\succ}(x)\omega_2(y,z)+\omega_1(x,y\prec z),\end{align}
    \begin{align}\label{C9}l_{\prec}(x\succ y)a+\omega_1(x,y)\prec_{V}a=l_{\succ}(x)l_{\prec}(y)a, 
    \ \ \ r_{\prec}(y)l_{\succ} (x)a=l_{\succ}(x)r_{\prec}(y)a,\end{align}
    \begin{align}\label{C10}r_{\prec}(y)r_{\succ} (x)a=r_{\succ}(x\prec y)a+a\succ_{V}\omega_2(x,y), \ \ \ 
  (l_{\succ}(x)a)\prec_{V}b=l_{\succ}(x)(a\prec_{V}b),\end{align}
    \begin{align}\label{C11} (r_{\succ}(x)a)\prec_{V}b=a\succ_{V}(l_{\prec}(x)b), \ \ \  r_{\prec}(x)(a\succ_{V}b)=a\succ_{V}(r_{\prec}(x)b),\end{align}
      \begin{align} \label{C12} \text{$(V,\succ_{V},\prec_{V})$ is an anti-dendriform algebra.}\end{align}
where $l_{\cdot}=l_{\succ}+l_{\prec},~~r_{\cdot}=r_{\succ}+r_{\prec},~~\omega=\omega_1+\omega_2$
 and $\cdot_{V}=\succ_{V}+\prec_{V},\cdot=\succ+\prec$.
  \end{pro}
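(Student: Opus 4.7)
The plan is to mirror the proof of Proposition \ref{U1}, verifying the two defining identities \eqref{A1} and \eqref{A2} of an anti-dendriform algebra for $(A\sharp V,\succeq,\preceq)$ by a systematic case analysis. By bilinearity it suffices to check these identities on triples $((x_1,a_1),(x_2,a_2),(x_3,a_3))$ in which each slot is ``pure'', meaning lying either in $A\oplus 0$ or in $0\oplus V$; this gives $2^3 = 8$ sub-cases for each identity. The crucial simplification is that the $A$-component of \eqref{A3}-\eqref{A4} depends only on $\succ$ and $\prec$ on $A$, so the first-component identities collapse either to the anti-dendriform axioms on $A$ (which are assumed) or to the trivial identity $0=0$. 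All substantive content therefore lives in the $V$-component, and it is exactly this content that should be repackaged into \eqref{C1}-\eqref{C12}.

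I would next enumerate the cases and read off which condition each produces. The $(A,A,A)$-case, while trivial in the first component, forces in the $V$-component the cocycle identity \eqref{C1} from the quadruple \eqref{A1} and the mixed cocycle identity \eqref{C8} from \eqref{A2}. The three cases with exactly one $V$-entry, namely $(V,A,A)$, $(A,V,A)$, $(A,A,V)$, yield the conditions \eqref{C2}-\eqref{C4} from \eqref{A1} and \eqref{C9}-\eqref{C10} from \eqref{A2}; these express that $(l_\succ,r_\succ,l_\prec,r_\prec)$ fails to be a strict representation only by a ``twist'' coming from $\omega_1,\omega_2$. The three cases with exactly two $V$-entries, $(A,V,V)$, $(V,A,V)$, $(V,V,A)$, give \eqref{C5}-\eqref{C7} together with \eqref{C11}, describing the compatibility of $\succ_V,\prec_V$ with the four action maps. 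Finally, the $(V,V,V)$-case reproduces precisely the anti-dendriform axioms for $(V,\succ_V,\prec_V)$, which is condition \eqref{C12}. The reverse implication is immediate from the same enumeration.

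The main obstacle is purely bookkeeping. Identity \eqref{A1} is a quadruple equality contributing four scalar identities per triple, so the eight triples for \eqref{A1} combined with the eight for \eqref{A2} produce on the order of fifty separate scalar identities that must be extracted and repackaged into the twelve compact conditions of the statement. The most delicate step is the $(A,A,A)$-case of \eqref{A1}, where the second-component identities couple $\omega_1,\omega_2$ through $\omega=\omega_1+\omega_2$ and the action maps through $l_\cdot,r_\cdot$; disentangling the pieces to obtain the compact quadruple form \eqref{C1} without merging distinct operators requires care. Once the case-to-condition dictionary is pinned down, the remaining verifications are mechanical and can be dispatched, exactly as at the end of the proof of Proposition \ref{U1}, with the summary remark that the equivalence holds by direct calculation.
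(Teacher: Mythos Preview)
Your proposal is correct and is exactly the approach the paper takes: the paper's own proof of this proposition is the single sentence ``The proof follows the same argument as Proposition~\ref{U1},'' and your case analysis on pure triples mirrors that argument precisely. One small bookkeeping correction: the paper's labeling packages conditions a bit unevenly, so that the second identity in \eqref{C10} actually arises from the two-$V$ case $(A,V,V)$ under axiom \eqref{A2} rather than from a one-$V$ case, but this does not affect the validity of your plan.
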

  
\begin{proof}  The proof follows the same argument as Proposition \ref{U1}.
  
  \end{proof}

\begin{rmk}
Let $(A,\succ,\prec)$ be an anti-dendriform algebra and $V$ a vector space. Suppose that
 $\Omega(A,V)=(l_{\succ},r_{\succ},l_{\prec},r_{\prec},\rho_{\succ},\mu_{\succ},\rho_{\prec},\mu_{\prec},
    \varpi_1,\varpi_2,\succ_V,\prec_V)$
is an extending datum of $(A,\succ,\prec)$ through $V$. When
 $l_{\succ},r_{\succ},l_{\prec},r_{\prec}$ are trivial maps, 
 then $\Omega(A,V)=(\rho_{\succ},\mu_{\succ},\rho_{\prec},\mu_{\prec},
    \varpi_1,\varpi_2,\succ_V,\prec_V)$ is also called pre-crossed datum of $(A,\succ,\prec)$ through $V$,
 where the anti-dendriform algebra structure $(\succeq,\preceq)$ on $A\oplus V$ is given by
 \begin{align*}
 & (x,a)\succeq (y,b)=(x\succ y+\rho_{\succ}(a)y+\mu_{\succ}(b)x+\varpi_1(a,b),a\succ_V b)),\\
    & (x,a)\preceq(y,b)=(x\prec y+\rho_{\prec}(a)y+\mu_{\prec}(b)x+\varpi_2(a,b),a\prec_V b)),~~\forall~~x,y \in A,a,b \in V.
\end{align*}
 In this case, $\Omega(A,V)=(\rho_{\succ},\mu_{\succ},\rho_{\prec},\mu_{\prec}, \varpi_1,\varpi_2,\succ_V,\prec_V)$ is 
also called a {\bf crossed system } of the anti-dendriform algebra $(A,\succ,\prec)$ through $V$,
and the associated unified product $A\natural V$ is called the {\bf crossed product}. One can find the crossed product for Poisson algebras
in the same sense in \cite{7}. 

\end{rmk}

\begin{thm}\label{CU2} 
    Let $(A,\succ,\prec)$ be an anti-dendriform algebra, $E$ a vector space and $\pi:E\longrightarrow A$ a sujective map
    with $V=Ker(\pi)$. Then any anti-dendriform algebra structure $(\succeq,\preceq)$ 
    which can be given on $E$ such that $\pi:E\longrightarrow A$ is a homomorphism of anti-dendriform algebras is isomorphic
     to a crossed product $A\sharp V$. Furthermore, the isomorphism of anti-dendriform algebras 
  $(E,\succeq,\preceq) \cong A\sharp V$ can be chosen such that it stabilizes $V$ and co-stabilizes $A$.
  \end{thm}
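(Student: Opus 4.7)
The plan is to mimic the proof of Theorem \ref{U2}, but with the roles of the subalgebra and the kernel interchanged. The key tool is a linear section of the surjection $\pi$.

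First, since $\pi: E \longrightarrow A$ is a surjective linear map of vector spaces, there exists a linear section $s: A \longrightarrow E$ with $\pi \circ s = \mathrm{id}_A$. This gives a direct sum decomposition $E = s(A) \oplus V$ as vector spaces. Because $\pi$ is a homomorphism of anti-dendriform algebras and $V = \mathrm{ker}(\pi)$, we have $V \succeq V \subseteq V$, $V \succeq s(A) \subseteq V$, $s(A) \succeq V \subseteq V$, and symmetrically for $\preceq$, while $s(x) \succeq s(y)$ projects to $x \succ y$ under $\pi$ and therefore differs from $s(x \succ y)$ by an element of $V$.

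Second, I would define the pre-crossed datum $C(A,V) = (l_{\succ}, r_{\succ}, l_{\prec}, r_{\prec}, \omega_1, \omega_2, \succ_V, \prec_V)$ by
\begin{align*}
 &l_{\succ}(x)a = s(x) \succeq a,\qquad r_{\succ}(y)a = a \succeq s(y),\\
 &l_{\prec}(x)a = s(x) \preceq a,\qquad r_{\prec}(y)a = a \preceq s(y),\\
 &\omega_1(x,y) = s(x) \succeq s(y) - s(x \succ y),\qquad \omega_2(x,y) = s(x) \preceq s(y) - s(x \prec y),\\
 &a \succ_V b = a \succeq b,\qquad a \prec_V b = a \preceq b,
\end{align*}
for all $x,y \in A$ and $a,b \in V$. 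By the observations above, every right-hand side lies in the appropriate space. I would then verify that $C(A,V)$ is a crossed system by checking the axioms \eqref{C1}--\eqref{C12} of Proposition \ref{CU1}; each one follows by expanding the corresponding anti-dendriform identity in $(E, \succeq, \preceq)$ on triples drawn from $s(A) \sqcup V$ and separating the $s(A)$- and $V$-components using $\pi$ and $\mathrm{id}_E - s \circ \pi$. In particular, \eqref{C12} is automatic since $(V, \succeq|_V, \preceq|_V)$ inherits the anti-dendriform identities from $E$.

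Third, I would define the candidate isomorphism
\[ \varphi: A \sharp V \longrightarrow (E, \succeq, \preceq),\qquad \varphi(x,a) = s(x) + a, \]
whose inverse is $\varphi^{-1}(e) = (\pi(e),\, e - s\pi(e))$. A direct computation
\[ \varphi(x,a) \succeq \varphi(y,b) = s(x) \succeq s(y) + s(x) \succeq b + a \succeq s(y) + a \succeq b \]
regroups exactly into the crossed product formula \eqref{A3}, and analogously for \eqref{A4}, so $\varphi$ is a homomorphism of anti-dendriform algebras, hence an isomorphism. Finally, $\varphi(0,a) = a$ shows that $\varphi$ stabilizes $V$, and $\pi \varphi(x,a) = x = \pi_A(x,a)$ shows that it co-stabilizes $A$.

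The main obstacle is purely bookkeeping: producing the long list of identities \eqref{C1}--\eqref{C12} from the anti-dendriform axioms in $E$. This is entirely parallel to the decomposition carried out in Theorem \ref{U2}, so rather than redoing the calculations I would invoke that parallel and present only the definitions of the datum and the isomorphism, remarking that the verification is a case-by-case specialization of the axioms in $(E,\succeq,\preceq)$ to triples in $s(A) \cup V$.
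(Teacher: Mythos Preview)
Your proposal is correct and follows essentially the same approach as the paper: choose a linear section $s$ of $\pi$, define the crossed datum by the same formulas, and build the isomorphism $\varphi(x,a)=s(x)+a$ with inverse $\varphi^{-1}(e)=(\pi(e),e-s\pi(e))$. The only minor difference is that the paper bypasses the direct verification of \eqref{C1}--\eqref{C12} entirely by transporting the structure of $E$ through $\varphi^{-1}$ and observing that it coincides with the crossed product formulas \eqref{A3}--\eqref{A4}; since $E$ is already an anti-dendriform algebra this automatically forces the datum to be a crossed system, which is slightly more economical than your plan of checking the axioms case by case.
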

  \begin{proof} Let the linear map $s:A\longrightarrow E$ be a section of $\pi$, such that $\pi s=I_A$.
  Define linear maps
\begin{align*}l_{\succ},r_{\succ},l_{\prec},r_{\prec} : A \longrightarrow End(V)\end{align*}
  and bilinear maps 
  \begin{align*} \omega_1,\omega_2: A\times A \longrightarrow V\end{align*}
  respectively by
  \begin{equation}\label{cm3}l_{\succ}(x)a=s(x)\succ_{E} a,\ \ \ r_{\succ}(x)a=a\succ_{E} s(x),\end{equation} 
  \begin{equation}\label{cm4}l_{\prec}(x)a=s(x)\prec_{E} a,\ \ \ r_{\prec}(x)a=a\prec_{E} s(x),\end{equation} 
  \begin{equation}\label{cm5}\omega_1(x,y)=s(x)\succ_{E} s(y)-s(x\succ y),\ \ \omega_2(x,y)=s(x)\prec_{E} s(y)-s(x\prec y),\end{equation} 
  for all $a,b\in V$ and $x,y\in A$. Then  
  \begin{equation}\label{cm6}\varphi:A\oplus V\longrightarrow E,\ \ \varphi(x,a)=a+s(x),\ \forall~x\in A,~a\in V\end{equation}  
  is a linear isomorphism with the inverse  
  \begin{equation}\label{cm7}\varphi^{-1}:E\longrightarrow A\oplus V,\ \ \varphi^{-1}(e)=(\pi(e),e-s\pi(e)),\ \forall~e\in E.\end{equation} 
 We need to state that the anti-dendriform structure $(\succeq,\preceq)$ that can be given on $A\oplus V$ such that 
  $\varphi$ is an isomorphism of anti-dendriform algebras identifies with the one defined by Eqs. \eqref{A3}-\eqref{A4}.
   Indeed, since $\pi$ is a homomorphism of anti-dendriform algebras, for all $x,y\in A$ and $a,b\in V$, we have
 \begin{align}\label{cm8}&\pi(s(x)\succ_{E} s(y)+s(x)\succ_{E}b+a\succ_{E} b+a\succ_{E} s(y))
\\  \notag=& \pi s(x)\succ \pi s(y)+\pi s(x)\succ \pi(b)+\pi(a)\succ \pi(b)+\pi(a)\succ_{E} \pi s(y)
 \\  \notag=&x\succ y
 \end{align}
 and
  \begin{equation}\label{cm9}s\pi(s(x)\succ_{E} s(y)+s(x)\succ_{E}b+a\succ_{E} b+a\succ_{E} s(y))=s(x\succ y).\end{equation}
 By Eqs. \eqref{cm3}-\eqref{cm9}, we obtain
 \begin{align*}
 	&(x,a)\succeq (y,b)=\varphi^{-1}(\varphi(x,a)\succ_{E} \varphi(y,b))
 \\=&\varphi^{-1}((a+s(x))\succ_{E} (b+s(y)))\\=&\varphi^{-1}(a\succ_{E} s(y)+s(x)\succ_{E}s(y)+s(x)\succ_{E} b+a\succ_{E} b)
 \\=&(x\succ y,l_{\succ}(x)b+r_{\succ}(y)a+\omega_1(x,y)+a\succ_V b).
 \end{align*}
 Analogously, 
 \begin{equation*}(x,a)\preceq (y,b)=(x\prec y,l_{\prec}(x)b+r_{\prec}(y)a+\omega_2(x,y)+a\prec_V b).\end{equation*}
In addition, it is easy to check that the following diagram is commutative:
    \begin{equation*} \xymatrix{
  0 \ar[r] & V\ar@{=}[d] \ar[r]^{i_V} & A\sharp V\ar[d]_{\varphi} \ar[r]^{\pi_A} & A \ar@{=}[d] \ar[r] & 0\\
 0 \ar[r] & V \ar[r]^{i} & E \ar[r]^{\pi} & A  \ar[r] & 0 .}\end{equation*}
  We complete the proof. 
 
  	 \end{proof}
  	 
 By Theorem \ref{CU2}, any anti-dendriform algebra structure $(\succeq,\preceq)$ on a vector space $E$ such that
 $\pi:E\longrightarrow A$ is a homomorphism of
anti-dendriform algebras is isomorphic to a crossed product of $A$ 
 through $V=Ker(\pi)$. Hence the GE-problem reduces to the classification of all 
 crossed products $A\sharp V$, associated to all crossed system of $(A,\succ,\prec)$ by $V$. 
  	
  In the sequel, we will classify all crossed products $A\sharp V$.
  	 
\begin{defi}\label{01}
  	  Let $(A,\succ,\prec)$ be an anti-dendriform algebra and $V$ a vector space. Suppose that
   $C(A,V)=(l_{\succ},r_{\succ},l_{\prec},r_{\prec},
    \omega_1,\omega_2,\succ_V,\prec_V)$, 
    $C'(A,V)=(l_{\succ}',r_{\succ}',l_{\prec}',r_{\prec}',\omega_{1}',\omega_{2}',\succ^{'}_V,\prec^{'}_V)$
   are two crossed systems of $(A,\succ,\prec)$ through $V$.
 If there is a linear map $\zeta:A\longrightarrow V$, satisfying the following conditions:
\begin{align}
  &l_{\prec}(x)a=l'_{\prec}(x)a+\zeta(x)\prec^{'}_Va,~~l_{\succ}(x)a=l'_{\succ}(x)a+\zeta(x)\succ^{'}_Va,\label{N1}\\
  	 	&r_{\prec}(x)a=r'_{\prec}(x)a+a\prec^{'}_V\zeta(x),~~r_{\succ}(x)a=r'_{\succ}(x)a+a\succ^{'}_V\zeta(x),\label{N2}\\
  &\omega_1(x,y)+\zeta(x\succ y) =\omega_1^{'}(x,y)+l^{'}_{\succ}(x)\zeta(y)+r^{'}_{\succ}(y)\zeta(x)+\zeta(x)\succ^{'}_V\zeta(y)
  ,\label{N3}\\
  &\omega_2(x,y)+\zeta(x\prec y) =\omega_2^{'}(x,y)+l^{'}_{\prec}(x)\zeta(y)+r^{'}_{\prec}(y)\zeta(x)+\zeta(x)\prec^{'}_V\zeta(y)
  ,\label{N4}\\
  &a\succ_Vb=a\succ^{'}_Vb,~~a\prec_Vb=a\prec^{'}_Vb,\label{N5}
  	 	\end{align}
  	 	 for any $x \in A$ and $a,b \in V$. Then
  $C(A,V)$ and $C'(A,V)$ are called {\bf cohomologous}, which is 
denoted by $C(A, V) \approx C'(A, V)$.
  	\end{defi}
  
Now, we are ready to provide an answer for the GE-problem of anti-dendriform algebras by the following Theorem.
  
  	\begin{thm}\label{GE}
  		Let $(A,\succ,\prec)$ be an anti-dendriform algebra, $E$ a vector space and
   $\pi:E\longrightarrow A$ an epimorphism
    of vector spaces with $V=Ker(\pi)$. Then $\approx $ is an equivalence relation on the set $GC(A,V)$ of all
    crossed systems of $(A,\succ,\prec)$ by $V$. Denote $GH^{2}(A,V)=GC(A,V)/\approx$, then the map
$\Theta:GH^{2}(A,V) \longrightarrow  \mathcal{C}_{\pi}(E,A)/\approx$ is a bijection, where $\Theta$ assigns 
the equivalent class of $C(A, V)$ to the equivalent class of $A\sharp V$ and $\mathcal{C}_{\pi}(E,A)$ appeared in the Introduction. 
\end{thm}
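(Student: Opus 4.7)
The plan is to follow the standard three-part strategy used for GE-problems of this type: first verify that $\approx$ is an equivalence relation on $GC(A,V)$, then use Theorem \ref{CU2} to obtain surjectivity of $\Theta$, and finally characterize morphisms between crossed products fitting into diagram \eqref{E} to obtain injectivity of $\Theta$ and simultaneous well-definedness.

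For reflexivity of $\approx$, take $\zeta=0$; Eqs.~\eqref{N1}--\eqref{N5} then reduce to equalities. For symmetry, if $\zeta$ witnesses $C(A,V)\approx C'(A,V)$ then a direct substitution shows that $-\zeta$ witnesses $C'(A,V)\approx C(A,V)$ (Eq.~\eqref{N5} forces $\succ_V=\succ'_V$, $\prec_V=\prec'_V$, so the two $V$-structures coincide and the remaining equations are linear in $\zeta$). For transitivity, if $\zeta_1$ witnesses $C\approx C'$ and $\zeta_2$ witnesses $C'\approx C''$, then $\zeta_1+\zeta_2$ witnesses $C\approx C''$; here one uses that $\succ_V=\succ'_V=\succ''_V$ and similarly for $\prec_V$, so all the mixed terms line up additively.

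To define $\Theta$ and establish surjectivity: given a class $[E,\succeq_E,\preceq_E]\in\mathcal{C}_\pi(E,A)/\approx$, Theorem \ref{CU2} produces a section $s:A\to E$ of $\pi$ and data $(l_\succ,r_\succ,l_\prec,r_\prec,\omega_1,\omega_2,\succ_V,\prec_V)$ via Eqs.~\eqref{cm3}--\eqref{cm5} such that the induced isomorphism $\varphi:A\sharp V\to E$ of Eq.~\eqref{cm6} stabilizes $V$ and co-stabilizes $A$. By Proposition \ref{CU1} this is a crossed system, so $\Theta$ hits every class. Well-definedness of $\Theta$ is the forward half of the injectivity argument below.

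The technical heart is the injectivity of $\Theta$. I would prove the following key lemma analogous to Lemma \ref{63}: for two crossed systems $C(A,V)$ and $C'(A,V)$, the linear maps $\varphi:A\sharp V\to A\sharp' V$ that stabilize $V$ and co-stabilize $A$ (i.e.\ satisfy the analogue of diagram \eqref{E}) are in bijection with linear maps $\zeta:A\to V$, via $\varphi_\zeta(x,a)=(x,\,a+\zeta(x))$; moreover $\varphi_\zeta$ is automatically an isomorphism of vector spaces, and it is a homomorphism of anti-dendriform algebras if and only if Eqs.~\eqref{N1}--\eqref{N5} hold. The bijectivity part is immediate from the commutative diagram (any such $\varphi$ must satisfy $\pi_A\varphi(x,0)=x$ and $\varphi(0,a)=(0,a)$, forcing $\varphi(x,0)=(x,\zeta(x))$ for some $\zeta:A\to V$). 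Then applying $\varphi_\zeta$ to Eqs.~\eqref{A3}--\eqref{A4} and comparing components in $A$ and $V$ yields exactly \eqref{N1}--\eqref{N5}: the $A$-components give $x\succ y=x\succ y$ trivially, while the $V$-components separate into terms involving $l_\succ b,r_\succ a$ (giving \eqref{N1}--\eqref{N2}), the cocycle part (giving \eqref{N3}--\eqref{N4}), and the $V$-multiplication (giving \eqref{N5}). Granting this lemma, $\Theta$ is a bijection.

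The main obstacle is the bookkeeping in the last lemma: carefully expanding $\varphi_\zeta((x,a)\succeq(y,b))$ versus $\varphi_\zeta(x,a)\succeq'\varphi_\zeta(y,b)$ in $A\sharp' V$ and matching the eight resulting $V$-valued terms against Eqs.~\eqref{N1}--\eqref{N4} — but this is a routine component-by-component verification once the generic form of $\varphi$ is pinned down, and is entirely parallel to the proof of Lemma \ref{63} with the simplification that here $\eta=\mathrm{id}_V$ and $\zeta$ is reversed in direction ($A\to V$ rather than $V\to A$).
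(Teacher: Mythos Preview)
Your proposal is correct and follows exactly the approach the paper intends: the paper's own proof is a one-line reference to Proposition~\ref{CU1} and Theorem~\ref{CU2}, and what you have written is precisely the standard unpacking of that reference --- verification that $\approx$ is an equivalence relation, surjectivity via Theorem~\ref{CU2}, and well-definedness/injectivity via the analogue of Lemma~\ref{63} characterizing morphisms $A\sharp V\to A\sharp' V$ in $\mathcal{C}_\pi(E,A)$ by linear maps $\zeta:A\to V$ satisfying Eqs.~\eqref{N1}--\eqref{N5}. Your treatment of symmetry and transitivity (using that Eq.~\eqref{N5} forces the $V$-structures to coincide, and using Eqs.~\eqref{N1}--\eqref{N2} to convert $l'_\succ,r'_\succ$ to $l''_\succ,r''_\succ$ in the transitivity step) is exactly the computation the paper suppresses.
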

  	\begin{proof}
The conclusion can be verified by Proposition \ref{CU1} and Theorem \ref{CU2} .
  	 	\end{proof}  

Calculating $GH^{2}(A,V)$ is a highly nontrivial problem. We connect it with the classical extension problem.

Let $A$ and $B$ be two anti-dendriform algebras.
 A {\bf non-abelian extension} of $A$ by $B$ is an anti-dendriform algebra $E$ equipped with a 
 short exact sequence of anti-dendriform algebras:
	\begin{align*}
		\mathcal{E}:0\longrightarrow B\stackrel{i}{\longrightarrow} E\stackrel{p}{\longrightarrow}A\longrightarrow0.
	\end{align*}
When $B$ is abelian, the extension is called an {\bf abelian extension} of $A$ by $B$. A {\bf section} of $p$ is a linear map $s: A
\rightarrow E $ such that $ps = I_{A}$.

Let $ E_1$ and $E_2$
be two non-abelian extensions of $A$ by $B$. They are said to be 
{\bf cohomologous} if there is an isomorphism of anti-dendriform algebras
$\varphi:E_1\longrightarrow E_2$ such that
the following commutative diagram holds:
 \begin{equation}\label{E1} \xymatrix{
  0 \ar[r] & B\ar@{=}[d] \ar[r]^{i_1} & E_1\ar[d]_{\varphi} \ar[r]^{p_1} & A \ar@{=}[d] \ar[r] & 0\\
 0 \ar[r] & B \ar[r]^{i_2} & E_2 \ar[r]^{p_2} & A  \ar[r] & 0
 .}\end{equation}
It is natural to describe all anti-dendriform algebras $E$ which are extensions 
of $A$ by $B$. This is the classical extension problem. 
 
\begin{thm} \label{Es}
Let $(A,\succ_A,\prec_A)$ and $(V,\succ_{V},\prec_{V})$ be two anti-dendriform algebras.
Assume that
 \begin{equation*}0\longrightarrow V\stackrel{i}{\longrightarrow} E\stackrel{p}{\longrightarrow} A\longrightarrow0\end{equation*} 
  is
an extension of $(V,\succ_{V},\prec_{V})$ by $(A,\succ_A,\prec_A)$ with a section $s$ of $p$. Then any 
extension of $A$ by $V$ is cohomologous to a crossed product extension:
  \begin{align}\label{E2}
	0\longrightarrow V\stackrel{i}{\longrightarrow} A\sharp V \stackrel{\pi}{\longrightarrow}A\longrightarrow0.
	\end{align}
\end{thm}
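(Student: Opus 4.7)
The plan is to invoke Theorem \ref{CU2} essentially verbatim and then check that the resulting isomorphism fits into a morphism-of-extensions diagram. Given the linear section $s:A\to E$ (so $ps=I_A$ and $s(0)=0$), I define linear maps $l_\succ,r_\succ,l_\prec,r_\prec:A\to\mathrm{End}(V)$ and bilinear maps $\omega_1,\omega_2:A\times A\to V$ by the formulas \eqref{cm3}--\eqref{cm5} from the proof of Theorem \ref{CU2}, namely $l_\succ(x)a=s(x)\succ_E a$, $r_\succ(x)a=a\succ_E s(x)$ (and analogously for $\prec$), together with $\omega_1(x,y)=s(x)\succ_E s(y)-s(x\succ_A y)$ and $\omega_2(x,y)=s(x)\prec_E s(y)-s(x\prec_A y)$. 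Since $V=\ker p$ is an ideal of $E$ by exactness, all of these quantities indeed lie in $V$.

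For the operations on $V$ entering the crossed product I take the prescribed $(\succ_V,\prec_V)$. Because the inclusion $i:V\hookrightarrow E$ is a homomorphism of anti-dendriform algebras as part of the extension data, the restrictions of $\succ_E,\prec_E$ to $V$ coincide with $\succ_V,\prec_V$, so the datum $C(A,V)=(l_\succ,r_\succ,l_\prec,r_\prec,\omega_1,\omega_2,\succ_V,\prec_V)$ is exactly the one implicit in Theorem \ref{CU2}. By that theorem the compatibilities \eqref{C1}--\eqref{C12} of Proposition \ref{CU1} hold automatically, hence $C(A,V)$ is a genuine crossed system, and the map $\varphi:A\sharp V\to E$, $\varphi(x,a)=s(x)+a$, is an isomorphism of anti-dendriform algebras.

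It remains to show that $\varphi$ makes the cohomology diagram \eqref{E1} commute with $E_1=A\sharp V$, $E_2=E$, $i_1=i_V$, $p_1=\pi_A$, $i_2=i$, $p_2=p$. The left square follows from $\varphi(i_V(a))=\varphi(0,a)=s(0)+a=a=i(a)$, and the right square from $p(\varphi(x,a))=ps(x)+p(a)=x+0=x=\pi_A(x,a)$. Therefore the given extension is cohomologous to the crossed product extension \eqref{E2}, as required.

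The only conceptually nontrivial point, and the one I would state explicitly, is the identification of the pre-specified anti-dendriform structure on $V$ with the operations $\succ_V,\prec_V$ appearing in the crossed product; this is automatic from $V$ being an anti-dendriform subalgebra of $E$ via $i$. All of the heavier algebraic verification is already packaged inside Theorem \ref{CU2}, so no further computation is required.
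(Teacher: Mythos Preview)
Your proof is correct and follows exactly the same approach as the paper, which simply cites Theorem~\ref{CU2}; you have merely unpacked that citation by recalling the construction of the crossed system from the section $s$ and spelling out why the isomorphism $\varphi$ makes the extension diagram commute (the latter is already contained in the proof of Theorem~\ref{CU2}). The one extra remark you make about identifying the given anti-dendriform structure on $V$ with the restricted one is a welcome clarification but does not change the argument.
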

\begin{proof}
It can be obtained by Theorem \ref{CU2}.
\end{proof}

Theorem \ref{CU2}, Theorem \ref{GE} and Theorem \ref{Es} 
indicate that computing the classifying object $ \mathcal{C}_{\pi}(E,A)$ reduces to the classification
of all crossed extension of (\ref{E2}).

Let $C(A,V)=(l_{\succ},r_{\succ},l_{\prec},r_{\prec},\omega_1,\omega_2,\succ_V,\prec_V)$ be a crossed system of
$(A,\succ_{A},\prec_{A})$ through $V$. For a fixed anti-dendriform algebra $(V,\succ_{V},\prec_{V})$,
let $C_{l} (A,V)=(l_{\succ},r_{\succ},l_{\prec},r_{\prec},\omega_1,\omega_2)$
 satisfying Eqs. \eqref{C1}-\eqref{C11},
  which is called the 
{\bf local crossed system} of $(A,\succ_{A},\prec_{A})$ by $(V,\succ_{V},\prec_{V})$ or
{\bf non-abelian 2-cocycle } of $(A,\succ_{A},\prec_{A})$ with values in $(V,\succ_{V},\prec_{V})$.
Obviously, $(A\sharp V,\succeq,\preceq)$ is an anti-dendriform algebra with $(\succeq,\preceq)$ given by
Eqs. \eqref{A3}-\eqref{A4}.

Non-abelian 2-cocycles $C_{l} (A,V)=(l_{\succ},r_{\succ},l_{\prec},r_{\prec},\omega_1,\omega_{2})$ and 
$C_{l} ^{'}(A,V)=(l'_{\succ},r'_{\succ},l'_{\prec},r'_{\prec},\omega'_1,\omega'_{2})$ 
are called {\bf cohomologous}, denoted by 
$C_{l} (A,V)\approx C_{l}^{'}$
$ (A,V)$, if there is a linear map $\zeta:A \longrightarrow V$ satisfying Eqs. \eqref{N1}-\eqref{N4}.
Denote by $Z^{2}_{nab}(A,V)$ the set of all non-abelian 2-cocycles $C_{l} (A,V)$ of $(A,\succ_{A},\prec_{A})$ with values 
in $(V,\succ_{V},\prec_{V})$. The quotient of $Z^{2}_{nab}(A,V)$ by the above
equivalence relation is denoted by $H^{2}_{nab}(A,V)$, which is called the non-abelian second cohomology group
of $A$ in $V$. It is easy to check that $H_{nab}^{2}(A,V)=H^{2}(A,V)$ when
$A$ is an abelian anti-dendriform algebra, where $H^{2}(A,V)$ is the second cohomology group
of $A$ in $V$. 

Analogous to Theorem \ref{GE}, we have

\begin{thm} \label{101}
Let $(A,\succ_{A},\prec_{A})$ be an anti-dendriform algebra.
\begin{enumerate}
	\item If $(V,\succ_{V},\prec_{V})$ is a fixed anti-dendriform algebra. The map
	\begin{equation*}\Phi:H^{2}_{nab}(A,V)\longrightarrow   \mathcal{C}_{\pi}(E,A)/\approx,\ \ \ \Phi([Z^{2}_{nab}(A,V)])=[A\sharp V]\end{equation*}
	is a bijection, where $[Z^{2}_{nab}(A,V)]$ is the equivalent class of $Z^{2}_{nab}(V,A)$
	and $[A\sharp V]$ is the equivalent class of the crossed extension defined by (\ref{E2}).
	\item Let $E$ a vector space containing $A$ as an anti-dendriform ideal and $V$ a vector subspace of $E$.
	Then 
	\begin{equation*} GH^{2}(A,V)=\bigsqcup_{(V,\succ_{V},\prec_{V})}H^{2}_{nab}(A,V).\end{equation*}
\end{enumerate}
\end{thm}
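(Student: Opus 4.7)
The plan is to derive Theorem~\ref{101} from Theorem~\ref{GE} by stratifying the set $GC(A,V)$ of crossed systems according to the induced anti-dendriform structure on the underlying vector space $V$.

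For part (1), fix $(V,\succ_V,\prec_V)$ and consider only those crossed systems $C(A,V)=(l_\succ,r_\succ,l_\prec,r_\prec,\omega_1,\omega_2,\succ_V,\prec_V)$ whose last two components coincide with the fixed structure. By Proposition~\ref{CU1}, condition \eqref{C12} holds automatically, so such crossed systems are in natural bijection with the non-abelian $2$-cocycles $C_l(A,V)=(l_\succ,r_\succ,l_\prec,r_\prec,\omega_1,\omega_2)\in Z^{2}_{nab}(A,V)$ defined just before the theorem. The cohomology relation from Definition~\ref{01}, restricted to this stratum, satisfies \eqref{N5} trivially and reduces to \eqref{N1}-\eqref{N4}, which is exactly the definition of cohomologous non-abelian $2$-cocycles. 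Thus well-definedness of $\Phi$ is inherited directly from Theorem~\ref{GE}.

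For surjectivity, I would invoke Theorem~\ref{Es}: any extension of $A$ by $(V,\succ_V,\prec_V)$ is cohomologous to a crossed product extension \eqref{E2}, whose associated crossed system is obtained from a section $s$ of $p$ via \eqref{cm3}-\eqref{cm5} and carries the prescribed $(\succ_V,\prec_V)$ as its $V$-part. For injectivity, if two crossed extensions $A\sharp V$ and $A\sharp' V$ are cohomologous via some $\varphi$ in the sense of \eqref{E1} (stabilizing $V$, co-stabilizing $A$), then $\varphi$ necessarily has the form $\varphi(x,a)=(x,a+\zeta(x))$ for a unique linear map $\zeta:A\longrightarrow V$, since it restricts to the identity on $V$ and projects to the identity on $A$. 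Translating the homomorphism condition $\varphi((x,a)\succeq(y,b))=\varphi(x,a)\succeq'\varphi(y,b)$ together with its $\preceq$-counterpart into component equations yields precisely \eqref{N1}-\eqref{N4}, exhibiting the two non-abelian $2$-cocycles as cohomologous.

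For part (2), the key observation is that the equivalence relation $\approx$ on $GC(A,V)$ in Definition~\ref{01} includes condition \eqref{N5}, which forces $\succ_V=\succ'_V$ and $\prec_V=\prec'_V$. Hence $\approx$ never identifies crossed systems with different induced anti-dendriform structures on the fixed vector space $V$, and the quotient $GH^{2}(A,V)=GC(A,V)/\approx$ decomposes as the disjoint union stated in the theorem, with each stratum being $H^{2}_{nab}(A,V)$ by part~(1). The main technical obstacle is the injectivity step in (1), but this parallels Lemma~\ref{63} specialized to the crossed product setting with $\eta=id_V$, so it reduces to diagrammatic bookkeeping rather than new content.
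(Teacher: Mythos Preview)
Your proposal is correct and follows exactly the approach the paper indicates: the paper offers no explicit proof for Theorem~\ref{101}, prefacing it only with ``Analogous to Theorem~\ref{GE}, we have''. Your stratification of $GC(A,V)$ by the induced anti-dendriform structure on $V$, the reduction of Definition~\ref{01} to \eqref{N1}--\eqref{N4} on each stratum, and the observation that \eqref{N5} keeps different $V$-structures in separate $\approx$-classes are precisely the details needed to make that analogy rigorous.
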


In general, it is not easy to calculate $GH^{2}(V,A)$. When $A$ and $V$ are abelian anti-dendriform algebras, 
we denote by $A_0$ and $V_{0}$ respectively.
Then $GH^{2}(A_0,V_0)=H^{2}_{nab}(A_0,V_0)$.

\begin{ex}
Let $k_0$ be an abelian anti-dendriform algebra of dimension $n$ with basis $E_{n+1}$
and
$k_0^{n}$ an n-dimensional abelian anti-dendriform algebra with basis $\{E_1,E_2,\cdot\cdot\cdot,E_n \}$. Suppose that
 $C_{l} (k_0,k_0^{n})=(l_{\succ},r_{\succ},l_{\prec},r_{\prec},\omega_1,\omega_2)$ 
is the local system of $k_0$ through $k_0^{n}$ and 
$(k_0\sharp k_0^{n},\succeq,\preceq)$ is the corresponding 
crossed product. Assume that
\begin{align*}
		&E_{n+1}\succeq E_{i}:=l_{\succ}(E_{n+1})E_i=\sum_{j = 1}^n a_{ji}E_{j}, \ \ \ 
E_{n+1}\preceq E_{i}:=l_{\prec}(E_{n+1})E_i=\sum_{j = 1}^n c_{ji}E_{j},
\\
&E_{i}\succeq E_{n+1}:=r_{\succ}(E_{n+1})E_i=\sum_{j = 1}^n b_{ji}E_{j},
\ \ \ E_{i}\preceq E_{n+1}:=r_{\prec}(E_{n+1})E_i=\sum_{j = 1}^n d_{ji}E_{j},\\
&E_{n+1}\succeq E_{n+1}:=\omega_1(E_{n+1},E_{n+1})=\sum_{j = 1}^n \theta_{0j}E_{j}, \ \ \
		E_{n+1}\preceq E_{n+1}:=\omega_2(E_{n+1},E_{n+1})=\sum_{j = 1}^n \epsilon_{0j}E_{j},
	\end{align*}	
Combining Eqs.~(\ref{C1})-(\ref{C11}), we have
\begin{align*}
	&A^2=-C(A+C)=0,\ \ \ AB=DC=-B(A+C)=-C(B+D)B,\\&
D^2=-B(D+B)=0, \ \  \ AC=0, \ \  \ DB=0,\ \ \ AD=DA,\\
	&A\theta_0=-B(\theta_{0}+\epsilon_0)=-C(\theta_{0}+\epsilon_0)=-D\epsilon_{0},\ \ \ D\theta_0=A\epsilon_0.
\end{align*}
where $A=(a_{ij})_{n\times n},~B=(b_{ij})_{n\times n},~C=(c_{ij})_{n\times n},~D=(d_{ij})_{n\times n}\in M_{n}(k)$ and
$\theta_{0}=(\theta_{0j})_{n\times 1},\epsilon_{0}=(\epsilon_{0j})_{n\times 1}\in k^{n}$ are matrices over the field $k$.

Two six-tuples $(A,B,C,D,\theta_{0},\epsilon_{0})$ and $(A',B',C',D',\theta'_{0},\epsilon'_{0})$ are cohomologous,
denoted by $(A,B,C,D,\theta_{0},\epsilon_{0})\approx(A',B',C',D',\theta'_{0},\epsilon'_{0})$,
if and only if $A=A'$, $B=B'$, $C=C'$, $D=D'$ and there exists $w\in k^{n}$ 
such that $\theta_{0}-\theta'_{0}=(A+B)w,~\epsilon_{0}-\epsilon'_{0}=(C+D)w$.
Let $\mathfrak{C}(n)$ be the set of all six-tuples $(A,B,C,D,\theta_{0},\epsilon_{0})$. 
Then, $GH^{2}(k_{0},k^{n}_{0})\cong \mathfrak{C}(n)/\approx$. We explain it in detail in the following.

Assume that $Z^{2}_{nab}(k_{0},k^{n}_{0})$ is the set of all local systems of
 $k^{n}_{0}$ through $k_{0}$. 
 Denote by $\mathcal{L}$ the set of all six-tuples $(\alpha,\beta,\gamma,\lambda,\upsilon_{0},\theta_{0})$,
 where $\alpha,\beta,\gamma,\lambda:k^{n}\longrightarrow k^{n}$ are three linear maps 
and 
 $\theta_{0},\epsilon_{0}\in k^{n}$ are vectors, satisfying the compatibility conditions: 
 \begin{align*}
	&\alpha^2=-\gamma(\alpha+\gamma)=0,\ \ \ \alpha\beta=\lambda\gamma=-\beta(\alpha+\gamma)=-\gamma(\beta+\lambda)\beta,\\&
\lambda^2=-\beta(\lambda+\beta)=0, \ \  \ \alpha\gamma=0, \ \  \ \lambda\beta=0,\ \ \ \alpha\lambda=\lambda\alpha,\\
	&\alpha(\theta_0)=-\beta(\theta_{0}+\epsilon_0)=-\gamma(\theta_{0}+\epsilon_0)=-\lambda\epsilon_{0},\ \ \ \lambda\theta_0=\alpha\epsilon_0.
\end{align*}
It is easy to check that there is a bijection between the sets $Z^{2}_{nab}(k_{0},k^{n}_{0})$ and $\mathcal{L}$.
Thus, we can obtain a local system $(l_{\succ},r_{\succ},l_{\prec},r_{\prec},\omega_1,\omega_2)$ corresponding to 
$(\alpha,\beta,\gamma,\lambda,\theta_{0},\epsilon_{0})$ as follows:
	\begin{align*}
		&l_{\succ}(x) a:=x\alpha(a), \ \ \ l_{\prec}(x)a:=x\gamma(a),\\
		&r_{\succ}(x) a:=x\beta(a), \ \ \ r_{\prec}(x)a:=x\lambda(a), 
\\&\omega_1(x,y):=xy\theta_{0}, \ \ \ \omega_2(x,y):=xy\epsilon_{0},
	\end{align*} 
	for all $x,y\in k$ and $a\in k^{n}$. Assume that $\{e_{1},e_{2},\cdots,e_{n}\}$ is the canonical basis of $k^{n}$.
Let $A,B,C,D$ be the matrices associated to $\alpha,\beta,\gamma$ and $\lambda$ with respect to this basis respectively.
 If we take the vectors $E_{1}=(e_{1},0),\cdots,E_{n}=(e_{n},0)$ and $E_{n+1}=(0,1)$ as a basis in $k^{n}\times k$, then the anti-dendriform algebra $k^{n+1}_{(A,B,C,D,\theta_{0},\epsilon_{0})}$ is just the crossed product $k_{0}\sharp k_{0}^{n}$ with the structures given by 
  \begin{align*}
		&(x,a)\succeq (y,b)=(0,l_{\succ}(x)b+r_{\succ}(y)a+\omega_1(x,y)),\\
		&(x,a)\preceq (y,b)=(0,l_{\prec}(x)b+r_{\prec}(y)a+\omega_2(x,y)),
	\end{align*}
  for all $a,b\in k_{0}^{n}$ and $x,y\in k_{0}$. Therefore, $GH^{2}(k_{0},k^{n}_{0})\cong \mathfrak{C}(n)/\approx$. 
\end{ex}
 
\subsection{Inducibility of pairs of anti-dendriform algebra automorphisms}
In this section, we study inducibility of pairs of anti-dendriform algebra
automorphisms and characterize them by equivalent conditions.

  Let $(A,\succ_A,\prec_A)$ and $(B,\succ_B,\prec_B)$ be
two anti-dendriform algebras. Assume that
$\mathcal{E}:0\longrightarrow B\stackrel{i}{\longrightarrow} E\stackrel{p}{\longrightarrow} A\longrightarrow0$ is
a non-abelian extension of $(A,\succ_A,\prec_A)$ by $(B,\succ_B,\prec_B)$ with a section $s$ of $p$.
Define linear maps $ l_{\succ,s},r_{\succ,s},l_{\prec,s},r_{\prec,s}:A\longrightarrow \hbox{End}(V)$
and bilinear maps $\omega_{1,s},\omega_{2,s}:A\times A\longrightarrow V$
respectively by
\begin{equation}\label{cm14}l_{\succ,s}(x)a=s(x)\succ_E a,\ \ \ r_{\succ,s}(x)a=a\succ_E s(x),\end{equation}
\begin{equation}\label{cm15}l_{\prec,s}(x)a=s(x)\prec_E a,\ \ \ r_{\prec,s}(x)a=a\prec_E s(x),\end{equation}
\begin{equation}\label{cm16}\omega_{1,s}(x,y)=s(x)\succ_E  s(y)-s(x\succ y), \ \ \ \omega_{2,s}(x,y)=s(x)\prec_E s(y)-s(x\prec y)\end{equation}
for all $x,y\in A,a\in B.$ According to the proof of Theorem \ref{CU2}, we have

\begin{pro} With the above notations, $(l_{\succ,s},r_{\succ,s},l_{\prec,s},r_{\prec,s},\omega_{1,s},\omega_{2,s})$ is
a non-abelian 2-cocycle of $(A,\succ_A,\prec_A)$ by $(B,\succ_B,\prec_B)$. We call it the
non-abelian 2-cocycle corresponding to the non-abelian extension $\mathcal{E}$ induced by $s$.
Furthermore, there is an anti-dendriform algebra structure $(\succ_{\omega_{1,s}},\prec_{\omega_{2,s}})$ on $A\oplus B$, 
denote this anti-dendriform algebra by $(A\sharp_{\omega_s}B,\succ_{\omega_{1,s}},\prec_{\omega_{2,s}})$.
 Denote $(l_{\succ,s},r_{\succ,s},l_{\prec,s},r_{\prec,s},\omega_{1,s},\omega_{2,s})$
  simply by  $(l_{\succ},l_{\succ},l_{\prec},r_{\prec},\omega_{1},\omega_{2})$ 
  and denote $(A\sharp_{\omega_s}B,\succ_{\omega_{1,s}},\prec_{\omega_{2,s}})$ 
  by $(A\sharp_{\omega}B,\succ_{\omega_{1}},\prec_{\omega_{2}})$
   when the section used is clear in the context.\end{pro}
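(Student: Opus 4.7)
The plan is to recognize this proposition as essentially a refinement of Theorem~\ref{CU2} specialized to the situation of a non-abelian extension, where the ``kernel'' already carries a prescribed anti-dendriform structure. First I would fix the section $s:A\to E$ of $p$ and, following the template in the proof of Theorem~\ref{CU2}, define the linear isomorphism
\begin{equation*}
\varphi:A\oplus B\longrightarrow E,\qquad \varphi(x,a)=s(x)+i(a),
\end{equation*}
whose inverse is $\varphi^{-1}(e)=(p(e),\,e-s(p(e)))$, where I identify $B$ with $i(B)=\ker(p)\subseteq E$. The idea is then to transfer the anti-dendriform structure on $E$ through $\varphi$ to $A\oplus B$ and read off the resulting operations in terms of $l_{\succ,s},r_{\succ,s},l_{\prec,s},r_{\prec,s},\omega_{1,s},\omega_{2,s}$ and $\succ_B,\prec_B$.

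Next I would carry out the routine unfolding: using that $p$ is a homomorphism of anti-dendriform algebras, one checks exactly as in Eqs.~\eqref{cm8}--\eqref{cm9} that for $x,y\in A$ and $a,b\in B$, the element $s(x)\succ_E s(y)+s(x)\succ_E i(b)+i(a)\succ_E s(y)+i(a)\succ_E i(b)$ maps under $p$ to $x\succ_A y$, with the component of the image in $\ker(p)$ being precisely $\omega_{1,s}(x,y)+l_{\succ,s}(x)b+r_{\succ,s}(y)a+a\succ_B b$ (using that $i$ is a monomorphism of anti-dendriform algebras, so the internal product $i(a)\succ_E i(b)$ identifies with $a\succ_B b$). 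An analogous computation handles $\preceq$. Thus the pulled-back structure on $A\oplus B$ is exactly the one given by Eqs.~\eqref{A3}--\eqref{A4}, with $\succ_V=\succ_B$ and $\prec_V=\prec_B$. This produces the promised anti-dendriform algebra $(A\sharp_\omega B,\succ_{\omega_1},\prec_{\omega_2})$, and $\varphi$ is by construction an anti-dendriform algebra isomorphism onto $E$.

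Finally, to verify the 2-cocycle condition it suffices to apply Proposition~\ref{CU1} to the pre-crossed datum $C(A,B)=(l_{\succ,s},r_{\succ,s},l_{\prec,s},r_{\prec,s},\omega_{1,s},\omega_{2,s},\succ_B,\prec_B)$. Since $(A\sharp_\omega B,\succ_{\omega_1},\prec_{\omega_2})\cong (E,\succ_E,\prec_E)$ is already an honest anti-dendriform algebra, Proposition~\ref{CU1} forces Eqs.~\eqref{C1}--\eqref{C12} to hold. Condition \eqref{C12} is automatic because $(B,\succ_B,\prec_B)$ is given to be an anti-dendriform algebra, so the remaining conditions \eqref{C1}--\eqref{C11} are precisely the defining equations of a non-abelian $2$-cocycle of $A$ with values in $B$, as required.

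The only non-bookkeeping point, and the one I would be most careful about, is the identification $i(a)\succ_E i(b)=a\succ_B b$ (and similarly for $\prec$): it is this step that ensures the ``internal'' operation on the kernel does not contribute an extra cocycle-like term and that the anti-dendriform structure induced on $B\subset E$ agrees with the prescribed $(\succ_B,\prec_B)$. Once this is noted, the rest of the argument is a direct specialization of Theorem~\ref{CU2} and Proposition~\ref{CU1}, and no further computation beyond that already carried out in Section~4 is needed.
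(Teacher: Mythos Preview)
Your proof is correct and follows exactly the same approach as the paper, which simply writes ``According to the proof of Theorem~\ref{CU2}, we have'' as its entire argument. You have merely unpacked this one-line reference in detail, so there is nothing to add.
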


\begin{lem}\label{CM} Let $(l_{\succ},r_{\succ},l_{\prec},r_{\prec},
\omega_1,\omega_2)$ and $(l'_{\succ},r'_{\succ},l'_{\prec},r'_{\prec},
\omega'_1,\omega'_2)$ be the non-abelian 2-cocycles
 corresponding to the non-abelian extension
  $\mathcal{E}:0\longrightarrow B\stackrel{i}{\longrightarrow} E\stackrel{p}{\longrightarrow}A\longrightarrow0$
induced by the sections $s,s'$ of $p$.
 Then $(l_{\succ},r_{\succ},l_{\prec},r_{\prec},
\omega_1,\omega_2)$ and $(l'_{\succ},r'_{\succ},l'_{\prec},r'_{\prec},
\omega'_1,\omega'_2)$
 are cohomologous. In other words, the equivalent classes of non-abelian 2-cocycles corresponding to
 a non-abelian extension
induced by sections are independent on the choice of sections. Moreover, $A\sharp_{\omega}B$ and $A\sharp_{\omega'}B$
  are isomorphic.\end{lem}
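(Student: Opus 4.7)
The plan is to exhibit an explicit $\zeta:A\longrightarrow B$ implementing the cohomology and then invoke Theorem~\ref{CU2} (or equivalently Lemma~\ref{63}) to get the isomorphism of the associated crossed products.

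First, since $ps=ps'=I_A$, the difference $s(x)-s'(x)$ lies in $\ker p=i(B)$ for every $x\in A$, so we may define a linear map $\zeta:A\longrightarrow B$ by $i\zeta(x):=s(x)-s'(x)$; equivalently $s(x)=s'(x)+\zeta(x)$ under the identification $B\cong i(B)\subset E$. My claim is that this $\zeta$ witnesses that $(l_{\succ},r_{\succ},l_{\prec},r_{\prec},\omega_{1},\omega_{2})$ and $(l'_{\succ},r'_{\succ},l'_{\prec},r'_{\prec},\omega'_{1},\omega'_{2})$ are cohomologous in the sense of Definition~\ref{01}; note that \eqref{N5} is automatic because both non-abelian 2-cocycles record the same anti-dendriform structure $(\succ_{B},\prec_{B})$ on $B$ (namely the restriction of $(\succ_{E},\preceq_{E})$ to the subalgebra $i(B)$).

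The verification of \eqref{N1}--\eqref{N2} follows by substituting $s(x)=s'(x)+\zeta(x)$ into \eqref{cm14}--\eqref{cm15}; for instance
\begin{equation*}
l_{\succ}(x)a=s(x)\succ_{E}a=s'(x)\succ_{E}a+\zeta(x)\succ_{E}a=l'_{\succ}(x)a+\zeta(x)\succ_{B}a,
\end{equation*}
where the last equality uses that $B$ is an anti-dendriform subalgebra of $E$. The remaining three relations for $r_{\succ},l_{\prec},r_{\prec}$ are identical in form. For \eqref{N3}, expand $\omega_{1}(x,y)=s(x)\succ_{E}s(y)-s(x\succ y)$ using bilinearity of $\succ_{E}$ on $(s'(x)+\zeta(x))\succ_{E}(s'(y)+\zeta(y))$; the four resulting terms are precisely $\omega'_{1}(x,y)+s(x\succ y)$, $l'_{\succ}(x)\zeta(y)$, $r'_{\succ}(y)\zeta(x)$, and $\zeta(x)\succ_{B}\zeta(y)$, and subtracting $s(x\succ y)=s'(x\succ y)+\zeta(x\succ y)$ rearranges to exactly \eqref{N3}. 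The derivation of \eqref{N4} is word-for-word identical with $\succ$ replaced by $\prec$. Hence the two cocycles are cohomologous.

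For the last assertion, the cohomology of the cocycles together with $\eta=\mathrm{id}_{V}$ produces, via Lemma~\ref{63} (specialized to the crossed-product setting), the linear isomorphism
\begin{equation*}
\varphi:A\sharp_{\omega}B\longrightarrow A\sharp_{\omega'}B,\qquad \varphi(x,a)=(x,a+\zeta(x)),
\end{equation*}
which is an anti-dendriform algebra homomorphism stabilizing $B$ and co-stabilizing $A$; alternatively, Theorem~\ref{CU2} asserts that both crossed products are isomorphic to $(E,\succ_{E},\prec_{E})$ via the sections $s$ and $s'$ respectively, and composing one isomorphism with the inverse of the other gives $\varphi$. The main obstacle is purely bookkeeping—tracking the symmetry between the four module maps and the two cocycles and confirming that $B$'s structure is unchanged across sections—but no genuine difficulty arises because every computation reduces to bilinearity of $\succ_{E},\prec_{E}$ and the subalgebra property of $B$.
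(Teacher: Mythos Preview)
Your argument is correct and follows essentially the same approach as the paper: define $\zeta(x)=s(x)-s'(x)$ and verify \eqref{N1}--\eqref{N4} by expanding bilinearly, which is exactly what the paper does (with $\zeta$ called $\varphi$ there). One small caveat: your appeal to Lemma~\ref{63} is not quite on target, since that lemma treats the ES-setting where $A$ is the subalgebra and $\zeta:V\to A$, whereas here the roles are dual ($B$ is the kernel and $\zeta:A\to B$); the paper instead writes out the isomorphism $\phi(x,a)=(x,\zeta(x)+a)$ and verifies the homomorphism property directly using \eqref{N1}--\eqref{N4}, but your alternative route through Theorem~\ref{CU2} (both crossed products are isomorphic to $E$, hence to each other) is valid and arguably cleaner.
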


\begin{proof}
 Assume that $(l_{\succ},r_{\succ},l_{\prec},r_{\prec},
\omega_1,\omega_2)$, $(l'_{\succ},r'_{\succ},l'_{\prec},r'_{\prec},\omega'_1,\omega'_2)$ are non-abelian
2-cocycles induced by sections $s$ and $s'$ respectively.
 Define a linear map $\varphi:A
\rightarrow B$ by $\varphi(x)=s(x)-s'(x)$. Since
$p\varphi(x)=ps(x)-ps'(x)=0$, $\varphi$ is well-defined. By
Eqs.~{\rm (\ref{cm14})}-{\rm (\ref{cm16})},
\begin{align*}&w_1(x,y)-w^{'}{_1}(x,y)=s(x)\succ_{E}s(y)-s(x\succ_A y)-s'(x)\succ_{E}s'(y)+s'(x\succ_A y)
\\=&s(x)\succ_{E}s(y)-s'(x)\succ_{E}s(y)+s'(x)\succ_{E}s(y)-s'(x)\succ_{E}s'(y)-s(x\succ_A y)+s'(x\succ_A y)
\\=&\varphi(x)\succ_{E}s(y)+s'(x)\succ_{E}\varphi(y)-\varphi(x\succ_A y)
\\=&\varphi(x)\succ_{E}\varphi(y)+\varphi(x)\succ_{E}s'(y)+l^{'}_{\succ}(x)\varphi(y)-\varphi(x\succ_A y)
\\=&\varphi(x)\succ_{E}\varphi(y)+r^{'}_{\succ}(y)\varphi(x)+l^{'}_{\succ}(x)\varphi(y)-\varphi(x\succ_A y),\end{align*} 
which yields that
 Eq.~(\ref{N3}) holds. By the same token, Eqs.~(\ref{N1})-(\ref{N2}) and (\ref{N4}) hold. Thus, $(l_{\succ},r_{\succ},l_{\prec},r_{\prec},
\omega_1,\omega_2)$ and $(l'_{\succ},r'_{\succ},l'_{\prec},r'_{\prec},
\omega'_1,\omega'_2)$ are cohomologous via the linear map $\varphi$.
Define a linear isomorphism
\begin{equation*}\phi:A\sharp_{\omega}B\longrightarrow A\sharp_{\omega'}B,\ \ \ \phi(x,a)=(x,\varphi(x)+a),~~\forall~x\in A,a\in B\end{equation*} 
with the inverse 
\begin{equation*}\phi^{-1}:A\sharp_{\omega'}B\longrightarrow A\sharp_{\omega}B,\ \ \ \phi(x,a)=(x,a-\varphi(x)),~~\forall~x\in A,a\in B\end{equation*} 
For all $x,y\in A,a,b\in V$, we have
\begin{align*}&\phi((x,a)\succ_{\omega_1}(y,b))
\\=&\phi(x\succ_A y,\omega_1(x,y)+l_{\succ}(x)b+r_{\succ}(y)a+a\succ_B b)
\\=&(x\succ_A y,\omega_1(x,y)+l_{\succ}(x)b+r_{\succ}(y)a+a\succ_B b+\varphi(x\succ_A y))
,\end{align*}
and
\begin{align*}&\phi(x,a)\succ_{\omega_{1}'}\phi(y,b)
\\=&(x,\varphi(x)+a)\succ_{\omega_{1}'}(y,\varphi(y)+b)
\\=&(x\succ_A y,\omega'_1(x,y)+l'_{\succ}(x)b+r'_{\succ}(y)a+l'_{\succ}(x)\varphi(y)+r'_{\succ}(y)\varphi(x)
+a\succ_B\varphi(y)+\varphi(x)\succ_Bb\\&+\varphi(x)\succ_B\varphi(y)
+a\succ_B b+\varphi(x\succ_A y))
\end{align*}
Thanks to Eqs.~(\ref{h4})-(\ref{h7}), we get that
$$\phi((x,a)\succ_{\omega_1}(y,b))=\phi(x,a)\succ_{\omega_{1}'}\phi(y,b).$$
Analogously, $\phi((x,a)\prec_{\omega_1}(y,b))=\phi(x,a)\prec_{\omega_{1}'}\phi(y,b).$
Hence, $\phi$ is an isomorphism of anti-dendriform
algebras. 
This finishes the proof.
\end{proof}

Let $(A,\succ_A,\prec_A)$ and $(B,\succ_B,\prec_B)$ be
two anti-dendriform algebras. Assume that
$\mathcal{E}:0\longrightarrow B\stackrel{i}{\longrightarrow} E\stackrel{p}{\longrightarrow} A\longrightarrow0$ is
a non-abelian extension of $(A,\succ_A,\prec_A)$ by $(B,\succ_B,\prec_B)$ with a section $s$ of $p$.
Analogous to the case of Lie algebras\cite{100}, we can define a group homomorphism
$$K:\mathrm{Aut}_{B} (E)\longrightarrow \mathrm{Aut}
(A)\times \mathrm{Aut} (B),~~K(\gamma)=(p\gamma s,\gamma|_{B}),$$ where
$$\mathrm{Aut}_{B}
(E)=\{\gamma\in \mathrm{Aut} (E)\mid \gamma
(B)=B\}.$$

\begin{defi} A pair of automorphisms $(\alpha,\beta)\in \mathrm{Aut} (A)\times \mathrm{Aut} (B)$
is said to be inducible (extensible) with respect to a non-abelian extension
 $$\mathcal{E}:0\longrightarrow B\stackrel{i}{\longrightarrow} E\stackrel{p}{\longrightarrow}A\longrightarrow0$$
if $(\alpha,\beta)$ is an image of $K$, or equivalently,
there is an automorphism $\gamma\in \mathrm{Aut}_{B}
(E)$ such that $i\beta=\gamma i,~p\gamma=\alpha p$, that is, the following commutative diagram holds:
\[\xymatrix@C=20pt@R=20pt{0\ar[r]&B\ar[d]_\beta\ar[r]^i&E\ar[d]_\gamma\ar[r]^p&A
\ar[d]_\alpha\ar[r]&0\\0\ar[r]&B\ar[r]^i&E\ar[r]^p&A\ar[r]&0.}\]
\end{defi}
It is natural to ask: when  a pair of automorphisms $(\alpha,\beta)\in \mathrm{Aut} (A)\times \mathrm{Aut} (B)$
is inducible? We discuss this problem in the following.

\begin{thm} \label{EC} Let $0\longrightarrow B\stackrel{i}{\longrightarrow}
E\stackrel{p}{\longrightarrow}A\longrightarrow0$ be a non-abelian extension of $A$ by
$B$ with a section $s$ of $p$ and
$(l_{\succ},r_{\succ},l_{\prec},r_{\prec},\omega_1,\omega_2)$ the corresponding non-abelian 2-cocycle
induced by $s$. A pair $(\alpha,\beta)\in \mathrm{Aut}(A)\times \mathrm{Aut}(B)$ is inducible if and only if there is a
linear map $\varphi:A\longrightarrow B$
satisfying the following conditions:
\begin{equation}\label{Iam1}
     \beta(l_{\succ}(x)a)-l_{\succ}(\alpha(x))\beta(a)=\varphi(x)\succ_B\beta(a),\ \ \
     \beta(r_{\succ}(x)a)-r_{\succ}(\alpha(x))\beta(a)=\beta(a)\succ_B \varphi(x),\end{equation}
\begin{equation}\label{Iam2}
     \beta(l_{\prec}(x)a)-l_{\prec}(\alpha(x))\beta(a)=\varphi(x)\prec_B \beta(a),\  \ \
     \beta(r_{\prec}(x)a)-r_{\prec}(\alpha(x))\beta(a)=\beta(a)\prec_B \varphi(x),
\end{equation}
\begin{equation}\label{Iam3}
     \beta\omega_1(x,y)-\omega_1(\alpha(x),\alpha(y))=\varphi(x)\succ_{B}\varphi(y)-\varphi(x\succ_A y)+l_{\succ}(\alpha(x))\varphi(y)+r_{\succ}(\alpha(y))\varphi(x),
\end{equation}
\begin{equation}\label{Iam4}
     \beta\omega_2(x,y)-\omega_2(\alpha(x),\alpha(y))=\varphi(x)\prec_{B}\varphi(y)-\varphi(x\prec_A y)+l_{\prec}(\alpha(x))\varphi(y)+r_{\prec}(\alpha(y))\varphi(x),
\end{equation}
for all $x,y\in A$ and $a\in B$.
\end{thm}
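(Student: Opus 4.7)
The plan is to identify $E$ with the crossed product $A\sharp_{\omega}B$ via Theorem~\ref{CU2} and argue on both sides using the section $s$. In this model any $\gamma\in\mathrm{Aut}_{B}(E)$ satisfying $p\gamma=\alpha p$ and $\gamma i=i\beta$ is forced to take the form $\gamma(x,a)=(\alpha(x),\varphi(x)+\beta(a))$ for a uniquely determined linear map $\varphi:A\longrightarrow B$, and the theorem asserts that requiring $\gamma$ to be a homomorphism of anti-dendriform algebras translates exactly into \eqref{Iam1}--\eqref{Iam4}.

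For the forward direction, assume such a $\gamma$ exists and set $\varphi(x):=\gamma s(x)-s\alpha(x)$. Applying $p$ gives $p\varphi(x)=\alpha ps(x)-\alpha(x)=0$, so $\varphi$ is valued in $B=\ker(p)$ and is well defined. To obtain \eqref{Iam1} for $l_{\succ}$, apply $\gamma$ to the identity $l_{\succ}(x)a=s(x)\succ_{E}a$ (note $l_{\succ}(x)a\in B$): since $\gamma$ preserves $\succ_{E}$ and $\gamma|_{B}=\beta$, one obtains $\beta(l_{\succ}(x)a)=(s\alpha(x)+\varphi(x))\succ_{E}\beta(a)=l_{\succ}(\alpha(x))\beta(a)+\varphi(x)\succ_{B}\beta(a)$, which is \eqref{Iam1}; the analogous computation with $r_{\succ},l_{\prec},r_{\prec}$ gives the remaining parts of \eqref{Iam1} and \eqref{Iam2}. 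For \eqref{Iam3} apply $\gamma$ to $s(x)\succ_{E}s(y)=s(x\succ_{A}y)+\omega_{1}(x,y)$, using $\gamma s(z)=s\alpha(z)+\varphi(z)$, $\gamma|_{B}=\beta$ and $s\alpha(x\succ_{A}y)=s(\alpha(x)\succ_{A}\alpha(y))$ (since $\alpha$ is a homomorphism); matching the $B$-components produces \eqref{Iam3}, and \eqref{Iam4} follows by the same argument with $\prec$ in place of $\succ$.

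Conversely, given $\varphi$ satisfying \eqref{Iam1}--\eqref{Iam4}, define $\gamma:A\sharp_{\omega}B\longrightarrow A\sharp_{\omega}B$ by $\gamma(x,a)=(\alpha(x),\varphi(x)+\beta(a))$. Bijectivity of $\gamma$ comes from that of $\alpha$ and $\beta$, and the conditions $p\gamma=\alpha p$ and $\gamma i=i\beta$ are immediate from the formula. It remains to verify that $\gamma$ respects $\succeq_{\omega_{1}}$ and $\preceq_{\omega_{2}}$; expanding both $\gamma((x,a)\succeq_{\omega_{1}}(y,b))$ and $\gamma(x,a)\succeq_{\omega_{1}}\gamma(y,b)$ via \eqref{A3}, the $A$-component equality reduces to $\alpha(x\succ_{A}y)=\alpha(x)\succ_{A}\alpha(y)$, while the $B$-component splits into contributions from the $l_{\succ}$ action, the $r_{\succ}$ action and the cocycle $\omega_{1}$, matched respectively by the two halves of \eqref{Iam1} and by \eqref{Iam3}. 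The analogous verification for $\preceq_{\omega_{2}}$ is carried out via \eqref{A4}, \eqref{Iam2} and \eqref{Iam4}. Finally, transporting $\gamma$ across the isomorphism $A\sharp_{\omega}B\cong E$ of Theorem~\ref{CU2} produces the desired automorphism of $E$ in $\mathrm{Aut}_{B}(E)$.

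The main obstacle is the bookkeeping in the converse direction: each side of the homomorphism condition splits into several distinct term types (cocycle, two one-sided actions, $A$-product, $B$-product) and one must check all of them match simultaneously. Conceptually, \eqref{Iam3}--\eqref{Iam4} encode the defect between applying $\gamma$ to a cocycle and applying the cocycle to the $\gamma$-images, while \eqref{Iam1}--\eqref{Iam2} encode the failure of $\beta$ to be equivariant under the $\alpha$-twisted $A$-actions; the four equations together are exactly enough to guarantee that $\gamma$ is an anti-dendriform homomorphism.
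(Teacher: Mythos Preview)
Your proposal is correct and follows essentially the same approach as the paper: the forward direction defines $\varphi=\gamma s-s\alpha$ and reads off \eqref{Iam1}--\eqref{Iam4} from the homomorphism property of $\gamma$, while the converse builds $\gamma$ from $(\alpha,\beta,\varphi)$ and checks it is an automorphism. The only cosmetic difference is that the paper carries out the converse directly in $E$ via the decomposition $\hat w=a+s(x)$ and $\gamma(a+s(x))=\beta(a)+\varphi(x)+s\alpha(x)$, whereas you work in the crossed product $A\sharp_{\omega}B$ and transport through the isomorphism of Theorem~\ref{CU2}; under that isomorphism the two constructions coincide.
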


\begin{proof} Assume that $(\alpha,\beta)\in \mathrm{Aut}(A)\times \mathrm{Aut}(B)$ is inducible, that is, there is an
automorphism $\gamma\in \mathrm{Aut}_{B}(E)$ such
that $\gamma i=i\beta$ and $p\gamma =\alpha p$. Due to $s$ being a section of $p$,
for all
$x\in A$,
$$p(s\alpha-\gamma s)(x)=\alpha(x)-\alpha(x)=0,$$
which implies that $(s\alpha-\gamma s)(x)\in \mathrm{ker}p=B$.
So we can define a linear map $\varphi:A\longrightarrow
B$ by
\begin{equation*}\varphi(x)=(\gamma s-s\alpha)(x),~~\forall~x\in A.\end{equation*}
 Using Eqs.~(\ref{cm15}) and (\ref{cm16}), for $x,y\in A, a\in B$, we get
\begin{align*}&
    \beta\omega_2(x,y)-\omega_2(\alpha(x),\alpha(y))
      \\=&\beta(s(x)\prec_{E}s(y)-s(x\prec_{A}y))-s(\alpha(x))\prec_{E}s(\alpha(y))+s(\alpha(x)\prec_{A}\alpha(y))
      \\=&\gamma s(x)\prec_{E}\gamma s(y)-\gamma s(x\prec_{A}y))-\gamma s(\alpha(x)\prec_{E}\alpha(y))+s\alpha(x\prec_{A}y)
      \\=&\varphi(x)\prec_{E}\gamma s(y)+s\alpha(x)\prec_{E}\varphi(y)-\varphi(x\prec_{A}y)
      \\=&\varphi(x)\prec_{E}\varphi(y)+\varphi(x)\prec_{E}s\alpha(y)
      +s\alpha(x)\prec_{E}\varphi(y)-\varphi(x\prec_{A}y)
      \\=&\varphi(x)\prec_{E}\varphi(y)+r_{\prec}(\alpha(y))\varphi(x)
      +l_{\prec}(\alpha(x))\varphi(y)-\varphi(x\prec_{A}y),
\end{align*}
which indicates that Eq.~(\ref{Iam4}) holds. Take the same procedure, we can prove that Eqs.~(\ref{Iam1})-(\ref{Iam3}) hold.

Conversely, suppose that $(\alpha,\beta)\in \mathrm{Aut}(A)\times
\mathrm{Aut}(B)$ and there is a linear map $\varphi:A\longrightarrow B$ 
satisfying Eqs. (\ref{Iam1})-(\ref{Iam4}). Since $s$ is a section of $p$,
all $\hat{w}\in E$ can be written as
$\hat{w}=a+s(x)$ for some $a\in B,x\in A.$
Define a linear map $\gamma:E\longrightarrow
E$ by
\begin{equation}\label{Iam5}\gamma(\hat{w})=\gamma(a+s(x))=\beta(a)+\varphi(x)+s\alpha(x).\end{equation}
It is easy to check that $i\beta=\gamma i,~p\gamma=\alpha p$ and $\gamma(B)=B$.
In the sequel, firstly, we prove that $\gamma$ is bijective.
Indeed if $\gamma(\hat{w})=0,$ we have $s\alpha(x)=0$ and
$\beta(a)-\varphi(x)=0$. In view of $s$ and $\alpha$ being
injective, we get $x=0$, which follows that $a=0$. Thus,
$\hat{w}=a+s(x)=0$, that is $\gamma$ is injective. For any
$\hat{w}=a+s(x)\in E$,
\begin{equation*}\gamma(\beta^{-1}(a)-\beta^{-1}\varphi\alpha^{-1}(x)+s\alpha^{-1}(x))=a+s(x)=\hat{w},\end{equation*}
which means that $\gamma$ is surjective. In all, $\gamma$ is
bijective.

Secondly, we show that $\gamma$ is a homomorphism of the anti-dendriform algebra $E$. 
Indeed, for all $\hat{w}_i=a_i+s(x_i)\in E~(i=1,2)$,
\begin{align*}&
      \gamma(\hat{w}_1)\succ_{E}\gamma(\hat{w}_2)
\\=&(\beta(a_1)+\varphi(x_1)+s\alpha(x_1))\succ_{E}(\beta(a_2)+\varphi(x_2)+s\alpha(x_2))
\\=&\beta(a_1)\succ_{E}\beta(a_2)+\beta(a_1)\succ_{E}\varphi(x_2)
+\beta(a_1)\succ_{E}s\alpha(x_2)
+\varphi(x_1)\succ_{E}\beta(a_2)+\varphi(x_1)\succ_{E}\varphi(x_2)\\&+\varphi(x_1)\succ_{E}s\alpha(x_2)
+s\alpha(x_1)\succ_{E}\beta(a_2)+
s\alpha(x_1)\succ_{E}\varphi(x_2)
+s\alpha(x_1)\succ_{E}s\alpha(x_2)
\\=&\beta(a_1\succ_{B} a_2)+\beta(a_1)\succ_{B}\varphi(x_2)
+r_{\succ}(\alpha(x_2))\beta(a_1)
+\varphi(x_1)\succ_{B}\beta(a_2)+\varphi(x_1)\succ_{B}\varphi(x_2)\\&+r_{\succ}(\alpha(x_2))
\varphi(x_1)+l_{\succ}(\alpha(x_1))\beta(a_2)+l_{\succ}(\alpha(x_1))\varphi(x_2)
+\omega_1(\alpha(x_1),\alpha(x_2))+s(\alpha(x_1)\succ_A \alpha(x_2))
\end{align*}
and
\begin{align*}&
      \gamma(\hat{w}_1\succ_{E}\hat{w}_2)
      \\=&\gamma((a_1+s(x_1))\succ_{E}(a_2
+s(x_2))
\\=&\gamma(a_1\succ_{E}a_2+a_1\hat{\circ}s(x_2)+s(x_1)\succ_{E}a_2
+s(x_1)\succ_{E}s(x_2))
\\=&\gamma(a_1\succ_{E}a_2+ l_{\succ}(x_1)a_2+\mu_{\succ}(x_2)
a_1 + \omega(x_1,x_2) +s(x_1\succ_{A} x_2))
\\=&\beta(a_1\succ_{B} a_2)+ \beta(l_{\succ}(x_1)a_2+\mu_{\succ}(x_2)
a_1) +\beta\omega_1(x_1,x_2)+\varphi(x_1\succ_{A} x_2)+s\alpha(x_1\succ_{A} x_2).
\end{align*}
By Eqs.~(\ref{Iam1})-(\ref{Iam4}), we have
$$ \gamma(\hat{w}_1\succ_{E}\hat{w}_2)=\gamma(\hat{w}_1)\succ_{E}\gamma(\hat{w}_2),$$
which implies that $\gamma$ is a homomorphism of anti-dendriform algebras.
 This proof is finished.
\end{proof}

Let $(A,\succ_A,\prec_A)$ and $(B,\succ_B,\prec_B)$ be
two anti-dendriform algebras. Assume that
$\mathcal{E}:0\longrightarrow B\stackrel{i}{\longrightarrow} E\stackrel{p}{\longrightarrow} A\longrightarrow0$ is
a non-abelian extension of $(A,\succ_A,\prec_A)$ by $(B,\succ_B,\prec_B)$ with a section $s$ of $p$ and
 $ (l_{\succ},r_{\succ},l_{\prec},r_{\prec},\omega_{1},\omega_{2})$ be the 
 corresponding non-abelian 2-cocycle induced by $s$.

 For all $(\alpha,\beta)\in \mathrm{Aut}(A
)\times \mathrm{Aut}(B)$, define a bilinear map $\omega_{(1,\alpha,\beta)},\omega_{(2,\alpha,\beta)}:A\times
A\rightarrow B$ and linear maps 
$l_{(\succ,\alpha,\beta)},r_{(\succ,\alpha,\beta)},l_{(\prec,\alpha,\beta)},r_{(\prec,\alpha,\beta)}:A
\rightarrow End(B)$
 respectively by
 \begin{equation}\label{Inc1}l_{(\succ,\alpha,\beta)}(x)a=\beta l_{\succ}(\alpha^{-1}(x))\beta^{-1}(a), \ \ \ 
 r_{(\succ,\alpha,\beta)}(x)a=\beta r_{\succ}(\alpha^{-1}(x))\beta^{-1}(a),\end{equation}
 \begin{equation}\label{Inc2}l_{(\prec,\alpha,\beta)}(x)a=\beta l_{\prec}(\alpha^{-1}(x))\beta^{-1}(a), \ \ \
  r_{(\prec,\alpha,\beta)}(x)a=\beta r_{\prec}(\alpha^{-1}(x))\beta^{-1}(a),\end{equation}
  \begin{equation}\label{Inc3}\omega_{(1,\alpha,\beta)}(x,y)=\beta\omega_1(\alpha^{-1}(x),\alpha^{-1}(y)), \ \ \ \omega_{(2,\alpha,\beta)}(x,y)=\beta\omega_2(\alpha^{-1}(x),\alpha^{-1}(y))\end{equation}
for all $x,y\in A,a\in V.$

\begin{pro} With the above notations,
$(l_{(\succ,\alpha,\beta)},r_{(\succ,\alpha,\beta)},l_{(\prec,\alpha,\beta)},r_{(\prec,\alpha,\beta)},
\omega_{(1,\alpha,\beta)},\omega_{(2,\alpha,\beta)})$
is a non-abelian 2-cocycle on $(A,\succ_A,\prec_A)$ with values in $(B,\succ_B,\prec_B)$. \end{pro}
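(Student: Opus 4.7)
The plan is to verify directly that the sextuple
$(l_{(\succ,\alpha,\beta)},r_{(\succ,\alpha,\beta)},l_{(\prec,\alpha,\beta)},r_{(\prec,\alpha,\beta)},\omega_{(1,\alpha,\beta)},\omega_{(2,\alpha,\beta)})$
satisfies each of the non-abelian $2$-cocycle conditions \eqref{C1}--\eqref{C11} by conjugating the corresponding identity for the original $2$-cocycle $(l_\succ,r_\succ,l_\prec,r_\prec,\omega_1,\omega_2)$ via the pair $(\alpha,\beta)$. The key observation is that every piece of the twisted datum is of the form $\beta\,(\text{something in }(l_\succ,r_\succ,l_\prec,r_\prec,\omega_1,\omega_2))\,\beta^{-1}$ with $\alpha^{-1}$ inserted on each argument in $A$; since $\alpha\in\mathrm{Aut}(A)$ and $\beta\in\mathrm{Aut}(B)$, we may freely use $\alpha(x\succ_A y)=\alpha(x)\succ_A\alpha(y)$, $\alpha(x\prec_A y)=\alpha(x)\prec_A\alpha(y)$, together with the analogous rules for $\beta$ and the trivial cancellation $\beta\beta^{-1}=\mathrm{id}_B$.

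To illustrate the mechanism, let us verify \eqref{C2}. By \eqref{Inc1}--\eqref{Inc3}, for all $x,y\in A$ and $a\in B$,
\begin{align*}
l_{(\succ,\alpha,\beta)}(x)\,l_{(\succ,\alpha,\beta)}(y)\,a
=\beta\bigl(l_\succ(\alpha^{-1}(x))\,l_\succ(\alpha^{-1}(y))\,\beta^{-1}(a)\bigr).
\end{align*}
Applying the original condition \eqref{C2} to the element $\beta^{-1}(a)\in B$, then applying $\beta$ and using that $\alpha$ preserves $\prec_A$ while $\beta$ preserves $\prec_B$, we obtain
\begin{align*}
l_{(\succ,\alpha,\beta)}(x)\,l_{(\succ,\alpha,\beta)}(y)\,a
&=\beta\,l_\prec\bigl(\alpha^{-1}(x\prec_A y)\bigr)\beta^{-1}(a)+\beta\omega_2(\alpha^{-1}(x),\alpha^{-1}(y))\prec_B a\\
&=l_{(\prec,\alpha,\beta)}(x\prec_A y)\,a+\omega_{(2,\alpha,\beta)}(x,y)\prec_B a,
\end{align*}
and the remaining two equalities packed into \eqref{C2} are obtained from their untwisted counterparts by the same recipe.

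The remaining conditions \eqref{C1} and \eqref{C3}--\eqref{C11} are each verified by the identical template: substitute the twisted definitions \eqref{Inc1}--\eqref{Inc3}, cancel the interior $\beta^{-1}\beta$ factors whenever two twisted operators compose, apply the corresponding untwisted identity inside the outermost $\beta$, and finally collapse every product of the form $\alpha^{-1}(x)\cdot_A\alpha^{-1}(y)$ into $\alpha^{-1}(x\cdot_A y)$ using that $\alpha$ is an anti-dendriform automorphism. There is no conceptual obstacle; the only thing to watch is the bookkeeping in the fourfold strings of equalities in \eqref{C1}--\eqref{C7}, each of whose four branches must be conjugated separately, but each branch is handled by exactly the same move. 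The anti-dendriform structure $(\succ_B,\prec_B)$ on $B$ is untouched by the twist, so no additional compatibility with $(V,\succ_V,\prec_V)$ has to be re-checked.
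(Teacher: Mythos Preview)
Your direct verification is correct: each of \eqref{C1}--\eqref{C11} for the twisted data is obtained from the corresponding identity for $(l_\succ,r_\succ,l_\prec,r_\prec,\omega_1,\omega_2)$ evaluated at $\alpha^{-1}(x),\alpha^{-1}(y),\beta^{-1}(a)$, then hit on the outside by $\beta$; the interior $\beta^{-1}\beta$ factors cancel, and $\alpha,\beta$ being anti-dendriform automorphisms lets you collapse $\alpha^{-1}(x)\succ_A\alpha^{-1}(y)=\alpha^{-1}(x\succ_A y)$ and $\beta(u\succ_B v)=\beta(u)\succ_B\beta(v)$, etc. Your sample computation for \eqref{C2} is accurate, and the template indeed carries over unchanged to the remaining identities.

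The paper takes a different, more structural route. Rather than check the eleven axioms one by one, it defines the linear isomorphism $\varphi:A\oplus B\to A\sharp B$, $\varphi(x,a)=(\alpha^{-1}(x),\beta(a))$, and computes that the \emph{pullback} anti-dendriform structure $(x,a)\succ_{(\alpha,\beta)}(y,b):=\varphi^{-1}\bigl(\varphi(x,a)\succ\varphi(y,b)\bigr)$ on $A\oplus B$ is exactly the crossed-product structure attached to the twisted sextuple. Since $A\sharp B$ is already an anti-dendriform algebra and $\varphi$ is a bijection, the pullback is automatically an anti-dendriform algebra; by Proposition~\ref{CU1} this forces the twisted sextuple to be a non-abelian $2$-cocycle. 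The paper's argument therefore replaces eleven separate verifications by one short transport-of-structure calculation. Your approach is more elementary and entirely self-contained, while the paper's buys brevity at the cost of invoking the equivalence in Proposition~\ref{CU1}.
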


\begin{proof} Instead of
proving the Eqs.\eqref{C1}-\eqref{C11} hold, which requires a long
and laborious computation. We give a simple proof. Since $(l_{\succ},r_{\succ},l_{\prec},r_{\prec},\omega_{1},\omega_{2})$ 
is a non-abelian 2-cocycle of $(A,\succ_A,\prec_A)$ with values in $(B,\succ_B,\prec_B)$, $(A\sharp B,\succ,\prec)$ 
is an anti-dendriform
algebra with $(\succ,\prec)$ defined as follows:
\begin{align}\label{Inc4}
 &(x,a)\succ (y,b)=(x\succ_A y,a\succ_V b+l_{\succ}(x)b+r_{\succ}(y)a+\omega_1(x, y)),\\
    \label{Inc5} & (x,a)\prec (y,b)=(x\prec_A y,a\prec_V b+l_{\prec}(x)b+r_{\prec}(y)a+\omega_2(x, y)),\ \ \forall~~x,y \in A,a,b \in B.
\end{align}
Define a linear isomorphism
 \begin{equation}\label{Inc6}\varphi: A\sharp_{(\alpha,\beta)} B \longrightarrow A\sharp B,
 \ \ \ \varphi(x,a)=(\alpha^{-1}(x),\beta(a)),~\forall~a\in B,~x\in A\end{equation}  
  with the inverse  
  \begin{equation}\label{Inc7}\varphi^{-1}:A\sharp B \longrightarrow A\sharp_{(\alpha,\beta)} B,\ \ \ 
  \varphi^{-1}(x,a)=(\alpha(x),\beta^{-1}(a)),~\forall~a\in B,~x\in A.\end{equation} 
In order to verify that $(l_{(\succ,\alpha,\beta)},r_{(\succ,\alpha,\beta)},l_{(\prec,\alpha,\beta)},r_{(\prec,\alpha,\beta)},
\omega_{(1,\alpha,\beta)},\omega_{(2,\alpha,\beta)})$
is a non-abelian 2-cocycle on $(A,\succ_A,\prec_A)$ with values in $(B,\succ_B,\prec_B)$, 
To verify the conclusion,
we only need to prove that the anti-dendriform algebra
structure on $(\succ_{\alpha,\beta},\prec_{\alpha,\beta})$ on $A\oplus B$, 
denoted the anti-dendriform algebra simply by $A\sharp_{(\alpha,\beta)} B$, such that $\varphi$ is an isomorphism
of anti-dendriform algebras is given by:
 \begin{align*}&(x,a)\succ_{(\alpha,\beta)} (y,b)
=(x\succ_A y,a\succ_V b+l_{(\succ,\alpha,\beta)}(x)b
+r_{(\succ,\alpha,\beta)}(y)a+\omega_{(1,\alpha,\beta)}(x,y)),\\
&(x,a)\prec_{(\alpha,\beta)} (y,b)
=(x\prec_A y,a\prec_V b+l_{(\prec,\alpha,\beta)}(x)b
+r_{(\prec,\alpha,\beta)}(y)a+\omega_{(2,\alpha,\beta)}(x,y))
\end{align*}
 for all $a,b\in B$ and $x,y\in A$.
Indeed, by Eqs. \eqref{Inc1}-\eqref{Inc7}, we have 
\begin{align*}&(x,a)\succ_{(\alpha,\beta)} (y,b)=\varphi^{-1} \varphi((x,a)\succ'(y,b))
\\=&\varphi^{-1}( \varphi(x,a)\succ \varphi(y,b))
\\=&\varphi^{-1}( (\alpha^{-1}(x),\beta(a))\succ (\alpha^{-1}(y),\beta(b)))
\\=&\varphi^{-1}(\alpha^{-1}(x)\succ_A \alpha^{-1}(y),\beta(a)\succ_V \beta(b)+l_{\succ}(\alpha^{-1}(x))\beta(b)
+r_{\succ}(\alpha^{-1}(y))\beta(a)+\omega_1(\alpha^{-1}(x), \alpha^{-1}(y)))
\\=&(x\succ_A y,a\succ_V b+\beta^{-1}(l_{\succ}(\alpha^{-1}(x))\beta(b))
+\beta^{-1}(r_{\succ}(\alpha^{-1}(y))\beta(a))+\beta^{-1}\omega_1(\alpha^{-1}(x), \alpha^{-1}(y)))
\\=&(x\succ_A y,a\succ_V b+l_{(\succ,\alpha,\beta)}(x)b
+r_{(\succ,\alpha,\beta)}(y)a+\omega_{(1,\alpha,\beta)}(x,y)),
\end{align*}
for all $a,b\in B$ and $x,y\in A$. 
By the same token, 
\begin{align*}(x,a)\prec_{(\alpha,\beta)} (y,b)
=(x\prec_A y,a\prec_V b+l_{(\prec,\alpha,\beta)}(x)b
+r_{(\prec,\alpha,\beta)}(y)a+\omega_{(2,\alpha,\beta)}(x,y)).
\end{align*}
 The proof is finished.
\end{proof}

\begin{thm}\label{CM2}Let $0\longrightarrow B\stackrel{i}{\longrightarrow}
E\stackrel{p}{\longrightarrow} A
\longrightarrow0$ be a non-abelian extension of an anti-dendriform algebra
 $(A,\succ_A,\prec_A)$ by $(B,\succ_B,\prec_B)$ with a section $s$ of $p$ and 
 $(l_{\succ},r_{\succ},l_{\prec},r_{\prec},\omega_1,\omega_2)$ be the
corresponding non-abelian 2-cocycle induced by $s$. A pair $(\alpha,\beta)\in \mathrm{Aut}(A
)\times \mathrm{Aut}(B)$ is inducible if and only if 
 $(l_{\succ},r_{\succ},l_{\prec},r_{\prec},\omega_1,\omega_2)$ and 
$(l_{(\succ,\alpha,\beta)},r_{(\succ,\alpha,\beta)},l_{(\prec,\alpha,\beta)},r_{(\prec,\alpha,\beta)},
\omega_{(1,\alpha,\beta)},\omega_{(2,\alpha,\beta)})$
are equivalent non-abelian 2-cocycles.\end{thm}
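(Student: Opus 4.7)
The strategy is to derive the statement directly from Theorem~\ref{EC} by setting up a natural bijection between the inducibility datum $\varphi$ and the cohomology datum $\zeta$. By Theorem~\ref{EC}, the pair $(\alpha,\beta)$ is inducible if and only if there is a linear map $\varphi:A\longrightarrow B$ satisfying Eqs.~\eqref{Iam1}-\eqref{Iam4}. On the other hand, with $V=B$ and both crossed systems sharing the fixed anti-dendriform structure $(\succ_B,\prec_B)$ on $B$, the two cocycles are cohomologous (in the sense of Definition~\ref{01}) if and only if there is a linear map $\zeta:A\longrightarrow B$ satisfying Eqs.~\eqref{N1}-\eqref{N4}; the condition \eqref{N5} is automatic since neither cocycle modifies $\succ_B$ or $\prec_B$. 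Hence it is enough to match the two solution sets.

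For the forward direction, assume $\varphi$ satisfies \eqref{Iam1}-\eqref{Iam4} and set $\zeta:=\varphi\circ\alpha^{-1}:A\longrightarrow B$. Substituting $x\mapsto\alpha^{-1}(x)$ and $a\mapsto\beta^{-1}(a)$ in the first identity of \eqref{Iam1} and invoking the definition \eqref{Inc1} of $l_{(\succ,\alpha,\beta)}$ yields
\[l_{(\succ,\alpha,\beta)}(x)a-l_{\succ}(x)a=\zeta(x)\succ_{B}a,\]
which is exactly \eqref{N1} for the pair consisting of the twisted cocycle (unprimed) and the original one (primed). The analogous substitutions in the remaining three bimodule identities of \eqref{Iam1}-\eqref{Iam2} produce the relations of \eqref{N1}-\eqref{N2} for $r_{\succ},l_{\prec},r_{\prec}$. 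For the cocycle part, performing the substitution $x\mapsto\alpha^{-1}(x),\ y\mapsto\alpha^{-1}(y)$ in \eqref{Iam3} and using \eqref{Inc3} gives
\[\omega_{(1,\alpha,\beta)}(x,y)-\omega_{1}(x,y)=\zeta(x)\succ_{B}\zeta(y)-\zeta(x\succ_{A}y)+l_{\succ}(x)\zeta(y)+r_{\succ}(y)\zeta(x),\]
which is a rearrangement of \eqref{N3}; the identical manipulation applied to \eqref{Iam4} recovers \eqref{N4}.

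For the converse, start from $\zeta$ satisfying \eqref{N1}-\eqref{N4} and set $\varphi:=\zeta\circ\alpha:A\longrightarrow B$. Reversing the above substitutions (i.e.\ sending $x\mapsto\alpha(x)$ and $a\mapsto\beta(a)$) converts each of \eqref{N1}-\eqref{N4} back into the corresponding equation of \eqref{Iam1}-\eqref{Iam4}, so Theorem~\ref{EC} furnishes an automorphism $\gamma\in\mathrm{Aut}_{B}(E)$ inducing the pair $(\alpha,\beta)$.

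The main obstacle is bookkeeping rather than conceptual: one must track the placement of $\alpha^{\pm1}$ and $\beta^{\pm1}$ carefully and verify that the six identities contained in \eqref{Iam1}-\eqref{Iam4} biject pair-by-pair with those in \eqref{N1}-\eqref{N4} under the correspondence $\varphi\leftrightarrow\zeta\circ\alpha$. Once this dictionary is established via \eqref{Inc1}-\eqref{Inc3}, each verification is a direct substitution, and the remaining work is routine.
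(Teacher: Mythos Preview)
Your proposal is correct and follows essentially the same approach as the paper's proof: both invoke Theorem~\ref{EC} and translate the conditions \eqref{Iam1}--\eqref{Iam4} into the cohomology relations \eqref{N1}--\eqref{N4} via the substitution $\zeta=\varphi\circ\alpha^{-1}$ (the paper writes this as ``cohomologous via the linear map $\varphi\alpha^{-1}$'' after introducing $x=\alpha(x_0)$, $y=\alpha(y_0)$). Your version is slightly more explicit about the converse direction, which the paper dismisses with ``can be checked analogously,'' but the argument is the same in substance.
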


\begin{proof}
Assume that $(\alpha,\beta)\in \mathrm{Aut}(A)\times \mathrm{Aut}(B)$
is inducible, then by Theorem \ref{EC}, there is a linear map
$\varphi:A\rightarrow B$ satisfying
Eqs.~(\ref{Iam1})-(\ref{Iam4}). For all $x,y\in A$, there are $x_0,y_0\in A$ such that $x=\alpha(x_0),~y=\alpha(y_0)$. Thus,
by Eqs.~(\ref{Iam3}) and (\ref{Inc3}), we obtain
\begin{align*}&
\omega_{(1,\alpha,\beta)}(x,y)-\omega_{1}(x,y)
\\=& \beta\omega_1(\alpha^{-1}(x),\alpha^{-1}(y))-\omega_{1}(x,y)
\\=&\beta\omega_1(x_0,y_0))-\omega_{1}(\alpha(x_0),\alpha(y_0))
\\=&\varphi(x_0)\succ_{B}\varphi(y_0)-\varphi(x_0\succ_A y_0)+l_{\succ}(\alpha(x_0))\varphi(y_0)+r_{\succ}(\alpha(y_0))\varphi(x_0).
 \end{align*}
By the same token, we obtain
 \begin{equation*}\omega_{(2,\alpha,\beta)}(x,y)-\omega_{2}(x,y)
=\varphi(x_0)\succ_{B}\varphi(y_0)-\varphi(x_0\succ_A y_0)+l_{\prec}(\alpha(x_0))\varphi(y_0)+r_{\prec}(\alpha(y_0))\varphi(x_0),
 \end{equation*}
   \begin{equation*}
     \beta(l_{\succ}(x_0)a)-l_{\succ}(\alpha(x_0))\beta(a)=\varphi(x_0)\succ_A \beta(a),\ \ \
     \beta(r_{\succ}(x_0)a)-r_{\succ}(\alpha(x_0))\beta(a)=\beta(a)\succ_A \varphi(x_0),\end{equation*}
\begin{equation*}
     \beta(l_{\prec}(x_0)a)-l_{\prec}(\alpha(x_0))\beta(a)=\varphi(x_0)\prec_A \beta(a),\  \ \
     \beta(r_{\prec}(x_0)a)-r_{\prec}(\alpha(x_0))\beta(a)=\beta(a)\prec_A \varphi(x_0).
\end{equation*}
  Thus, $(l_{\succ},r_{\succ},l_{\prec},r_{\prec},\omega_1,\omega_2)$ and 
$(l_{(\succ,\alpha,\beta)},r_{(\succ,\alpha,\beta)},l_{(\prec,\alpha,\beta)},r_{(\prec,\alpha,\beta)},
\omega_{(1,\alpha,\beta)},\omega_{(2,\alpha,\beta)})$ are cohomologous via the linear map
$\varphi\alpha^{-1}:A\rightarrow B$.

The converse part can be checked analogously.
\end{proof}

\subsection{Wells exact sequences for anti-dendriform algebras}
In this section, we consider the Wells map associated with non-abelian extensions of anti-dendriform algebras.
Then we interpret the results gained in Section 4.1 in terms of the Wells map.

Let $\mathcal{E}:0\longrightarrow B\stackrel{i}{\longrightarrow}
E\stackrel{p}{\longrightarrow} A\longrightarrow0$
be a non-abelian extension of $(A,\succ_A,\prec_A)$ by $(B,\succ_B,\prec_B)$ together with a section $s$ of $p$.
Assume that $(l_{\succ},r_{\succ},l_{\prec},r_{\prec},\omega_1,\omega_2)$ is the corresponding non-abelian 2-cocycle induced by $s$.
Define a linear map $W:\mathrm{Aut}(A)\times \mathrm{Aut}(B)\longrightarrow H^2_{nab}(A,B)$ by
\begin{equation}\label{W1}
W(\alpha,\beta)=[(l_{(\succ,\alpha,\beta)},r_{(\succ,\alpha,\beta)},l_{(\prec,\alpha,\beta)},r_{(\prec,\alpha,\beta)},
\omega_{(1,\alpha,\beta)},\omega_{(2,\alpha,\beta)})-(l_{\succ},r_{\succ},l_{\prec},r_{\prec},\omega_1,\omega_2)].
\end{equation}
The map $W$ is called the Wells map. Based on Lemma \ref{CM}, one can easily check that
the Wells map $W$ does not depend on the choice of sections.

\begin{thm} \label{Wm3} Let
$\mathcal{E}:0\longrightarrow B\stackrel{i}{\longrightarrow} E\stackrel{p}{\longrightarrow}A\longrightarrow0$
be a non-abelian extension of
$(A,\succ_A,\prec_A)$ by $(B,\succ_B,\prec_B)$ together with a section $s$ of $p$.
There is an exact sequence:
$$1\longrightarrow \mathrm{Aut}_{B}^{A}(E)\stackrel{T}{\longrightarrow} \mathrm{Aut}_{B}(E)\stackrel{K}{\longrightarrow}\mathrm{Aut}(A)\times \mathrm{Aut}(B)\stackrel{W}{\longrightarrow} H^2_{nab}(A,B),$$
where $\mathrm{Aut}_{B}^{A}(E)=\{\gamma \in \mathrm{Aut}(E)| K(\gamma)=(I_{A},I_{B}) \}$.\end{thm}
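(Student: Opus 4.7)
The plan is to verify exactness at each of the three middle nodes separately, using the results already established (especially Theorem \ref{CM2} and Lemma \ref{CM}). Before starting, I would first confirm that the Wells map $W$ is well defined as a map into $H^2_{nab}(A,B)$: by Lemma \ref{CM} the equivalence class of the non-abelian $2$-cocycle corresponding to $\mathcal{E}$ is independent of the chosen section, and a short computation using Eqs.~\eqref{Inc1}--\eqref{Inc3} shows that the twisted tuple $(l_{(\succ,\alpha,\beta)},r_{(\succ,\alpha,\beta)},l_{(\prec,\alpha,\beta)},r_{(\prec,\alpha,\beta)},\omega_{(1,\alpha,\beta)},\omega_{(2,\alpha,\beta)})$ also transforms by a coboundary when $s$ is changed, so the value of $W(\alpha,\beta)$ depends only on $\mathcal{E}$.

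Exactness at $\mathrm{Aut}_{B}^{A}(E)$ is the statement that $T$ is injective. Since $T$ is just the natural inclusion $\mathrm{Aut}_{B}^{A}(E)\hookrightarrow \mathrm{Aut}_{B}(E)$, this is immediate. Exactness at $\mathrm{Aut}_{B}(E)$ amounts to the identity $\ker K = \mathrm{Im}\,T$. By definition $\mathrm{Im}\,T = \mathrm{Aut}_{B}^{A}(E) = \{\gamma\in \mathrm{Aut}(E)\mid K(\gamma) = (I_{A},I_{B})\} = \ker K$, so nothing needs to be proved beyond unwinding the definitions.

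The only real content lies at the third position: I must show $\ker W = \mathrm{Im}\,K$. Given $(\alpha,\beta)\in \mathrm{Im}\,K$, there is some $\gamma\in \mathrm{Aut}_{B}(E)$ with $K(\gamma) = (\alpha,\beta)$, i.e.\ $(\alpha,\beta)$ is inducible. By Theorem \ref{CM2}, the cocycles $(l_{\succ},r_{\succ},l_{\prec},r_{\prec},\omega_{1},\omega_{2})$ and $(l_{(\succ,\alpha,\beta)},r_{(\succ,\alpha,\beta)},l_{(\prec,\alpha,\beta)},r_{(\prec,\alpha,\beta)},\omega_{(1,\alpha,\beta)},\omega_{(2,\alpha,\beta)})$ are cohomologous, whence $W(\alpha,\beta) = 0$ by \eqref{W1}. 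Conversely, if $W(\alpha,\beta) = 0$, then the two cocycles are cohomologous, so Theorem \ref{CM2} (the converse direction) produces an automorphism $\gamma\in \mathrm{Aut}_{B}(E)$ with $K(\gamma) = (\alpha,\beta)$, placing $(\alpha,\beta)\in \mathrm{Im}\,K$.

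The main obstacle is nothing conceptually new, but the bookkeeping of Theorem \ref{CM2}: one must be careful that the cohomologous datum used in the definition of $W$ (which compares the twisted cocycle with the original) lines up with the cohomologous datum furnished by Theorem \ref{CM2} (which is expressed in terms of a linear map $\varphi\alpha^{-1}\colon A\longrightarrow B$). Once one checks that the linear map relating the two cocycles in $W(\alpha,\beta) = 0$ can be reparameterized as $\varphi\alpha^{-1}$ for some $\varphi\colon A\longrightarrow B$ satisfying Eqs.~\eqref{Iam1}--\eqref{Iam4}, Theorem \ref{EC} supplies the required $\gamma$ via formula \eqref{Iam5}, completing exactness.
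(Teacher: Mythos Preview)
Your proposal is correct and follows the same route as the paper, which simply records ``It follows from Theorem~\ref{CM2}.'' You have spelled out the details the paper leaves implicit: exactness at the first two nodes is purely definitional, and exactness at $\mathrm{Aut}(A)\times\mathrm{Aut}(B)$ is exactly the biconditional of Theorem~\ref{CM2}. The extra bookkeeping you flag in your last paragraph is already absorbed into the proof of Theorem~\ref{CM2} (the cohomologous datum there is precisely $\varphi\alpha^{-1}$), so once you invoke that theorem no further reparameterization is needed.
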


\begin{proof} It follows from Theorem \ref{CM2}. 
\end{proof}
Let
$\mathcal{E}:0\longrightarrow B\stackrel{i}{\longrightarrow} E
\stackrel{p}{\longrightarrow}A\longrightarrow0$
be a non-abelian extension of
$(A,\succ_A,\prec_A)$ by $(B,\succ_B,\prec_B)$ together with a section $s$ of $p$. Assume that
$(l_{\succ},r_{\succ},l_{\prec},r_{\prec},\omega_1,\omega_2)$ is a non-abelian 2-cocycle induced by $s$. Denote
\begin{align}\label{W5} Z_{nab}^{1}(A,B)=&\{\varphi:A\rightarrow B|\varphi(x\succ_A y)-\varphi(x)\succ_B\varphi(y)
=l_{\succ}(x)\varphi(y)+r_{\succ}(y)\varphi(x),
\\ \notag&\varphi(x\prec_A y)-\varphi(x)\prec_B\varphi(y)=l_{\prec}(x)\varphi(y)+r_{\prec}(y)\varphi(x),
\\ \notag&\varphi(x)\succ_B a=a\succ_B \varphi(x)=\varphi(x)\prec_B a=a\prec_B \varphi(x)=0,~~\forall~x,y\in A,a\in B\}.\end{align}
It is easy to check that $Z_{nab}^{1}(A,B)$ is an abelian group. The abelian group $Z_{nab}^{1}(A,B)$ is called a non-abelian 1-cocycle.

\begin{pro}\label{Wm4} With the above notations, we have

  (i) The linear map $S:\mathrm{Ker} K\longrightarrow Z_{nab}^{1}(A,B)$ given by
  \begin{equation}\label{W6}S(\gamma)(x)=\varphi_{\gamma}(x)=\gamma s(x)-s(x),~\forall~~\gamma\in \mathrm{Ker} K,~x \in A\end{equation} is
a homomorphism of groups.

(ii) $S$ is an isomorphism, that is, $\mathrm{Ker }K\simeq Z_{nab}^{1}(A,B)$ as groups.\end{pro}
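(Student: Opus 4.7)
The plan is to establish (i) by first showing that for each $\gamma\in\mathrm{Ker}\,K$ the map $\varphi_\gamma:=\gamma s-s$ actually lands in $Z^{1}_{nab}(A,B)$, and then that $S$ converts composition of automorphisms into addition in $Z^{1}_{nab}(A,B)$. That $\varphi_\gamma$ takes values in $B=\mathrm{Ker}\,p$ is immediate from $p\gamma s=ps=I_{A}$. The two cocycle identities in (\ref{W5}) should emerge by applying the homomorphism $\gamma$ to the defining relation $s(x)\succ_{E}s(y)=\omega_1(x,y)+s(x\succ_{A}y)$: since $\gamma|_{B}=I_{B}$ fixes $\omega_1(x,y)$, expanding $\gamma s(x)\succ_{E}\gamma s(y)=(s(x)+\varphi_\gamma(x))\succ_{E}(s(y)+\varphi_\gamma(y))$ via (\ref{cm14}) produces precisely the required identity for $\varphi_\gamma(x\succ_{A}y)$, and the $\prec_{A}$ identity is handled the same way from $\omega_2$. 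The four vanishing conditions come from applying $\gamma$ to elements of the form $s(x)\succ_{E}a=l_\succ(x)a$, which already lie in $B$ and are therefore fixed by $\gamma$; comparing with $(s(x)+\varphi_\gamma(x))\succ_{E}a$ forces $\varphi_\gamma(x)\succ_{B}a=0$, and the other three cases are symmetric.

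The homomorphism property is then a one-line computation:
\begin{equation*}
S(\gamma_1\gamma_2)(x)=\gamma_1\gamma_2 s(x)-s(x)=\gamma_1(s(x)+\varphi_{\gamma_2}(x))-s(x)=\varphi_{\gamma_1}(x)+\varphi_{\gamma_2}(x),
\end{equation*}
where the middle equality uses $\gamma_1|_{B}=I_{B}$ on $\varphi_{\gamma_2}(x)\in B$. Note that the vanishing conditions built into $Z^{1}_{nab}(A,B)$ are exactly what guarantee that the pointwise sum of two such $1$-cocycles is again a $1$-cocycle, so the target really is an abelian group.

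For (ii), injectivity is immediate: if $S(\gamma)=0$ then $\gamma s=s$, and combined with $\gamma|_{B}=I_{B}$ and the internal direct sum $E=B\oplus s(A)$ this forces $\gamma=I_{E}$. For surjectivity, given $\varphi\in Z^{1}_{nab}(A,B)$, I define
\begin{equation*}
\gamma_\varphi:E\longrightarrow E,\qquad \gamma_\varphi(a+s(x))=a+\varphi(x)+s(x),
\end{equation*}
with set-theoretic inverse $a+s(x)\mapsto a-\varphi(x)+s(x)$. The identities $\gamma_\varphi|_{B}=I_{B}$ and $p\gamma_\varphi s=I_{A}$ are then automatic, so the only point to check is that $\gamma_\varphi$ respects $\succ_{E}$ and $\prec_{E}$. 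Expanding $\gamma_\varphi(a_1+s(x_1))\succ_{E}\gamma_\varphi(a_2+s(x_2))$ via (\ref{cm14}) produces nine bilinear terms; the four vanishing conditions of (\ref{W5}) annihilate the cross terms $a_i\succ_{B}\varphi(x_j)$ and $\varphi(x_i)\succ_{B}a_j$, while the remaining imbalance relative to $\gamma_\varphi\bigl((a_1+s(x_1))\succ_{E}(a_2+s(x_2))\bigr)$ collapses precisely to the $\succ$-cocycle identity $\varphi(x_1\succ_{A}x_2)=\varphi(x_1)\succ_{B}\varphi(x_2)+l_\succ(x_1)\varphi(x_2)+r_\succ(x_2)\varphi(x_1)$. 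The $\prec_{E}$ verification is parallel.

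I expect the main obstacle to be this last bookkeeping step in surjectivity: one must keep straight which of the nine products is absorbed by a vanishing condition, which reduces via $l_\succ,r_\succ,l_\prec,r_\prec$, and which picks up the non-abelian cocycle $\omega_1$ or $\omega_2$. Once both cocycle identities for $\varphi$ are invoked, the two sides match, and no new ideas beyond the defining axioms of $Z^{1}_{nab}(A,B)$ are required.
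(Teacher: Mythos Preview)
Your proposal is correct and follows essentially the same approach as the paper's proof: both verify $\varphi_\gamma\in Z^{1}_{nab}(A,B)$ by exploiting $\gamma|_B=I_B$ together with the homomorphism property of $\gamma$, establish the group homomorphism via the same one-line telescoping computation, and construct the inverse $\gamma_\varphi(a+s(x))=a+\varphi(x)+s(x)$ for surjectivity. You are in fact somewhat more explicit than the paper on two points: you spell out why the vanishing conditions $\varphi_\gamma(x)\succ_B a=0$ etc.\ hold (the paper absorbs this into ``analogously''), and you outline the nine-term bookkeeping for the homomorphism check in surjectivity, whereas the paper simply refers back to the converse part of Theorem~\ref{EC}.
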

\begin{proof} (i) By Eqs.~(\ref{cm14})-(\ref{cm16}) and (\ref{W6}), for all $x,y\in A$, we have,
\begin{align*}&\varphi_{\gamma}(x\succ_A y)-\varphi_{\gamma}(x)\succ_B\varphi_{\gamma}(y)
-l_{\succ}(x)\varphi_{\gamma}(y)-r_{\succ}(y)\varphi_{\gamma}(x)\\=&
\gamma s(x\succ_A y)-s(x\succ_A y)-(\gamma s(x)-s(x))\succ_E (\gamma s(y)-s(y))
-s(x)\succ_E\varphi_{\gamma}(y)-\varphi_{\gamma}(x)\succ_{E}s(y)
\\=&
\gamma s(x\succ_A y)-s(x\succ_A y)-\gamma s(x)\succ_{E}\gamma s(y)+ s(x)\succ_{E} s(y)
\\=&
\omega(x, y)-\gamma \omega(x, y)
\\=&0.
\end{align*}
Analogously, $\varphi_{\gamma}$ satisfies other equations in \eqref{W5}.
Therefore, $\varphi_{\gamma}\in Z_{nab}^{1}(A,B)$
and $S$ is well defined.
For any $\gamma_1,\gamma_2\in \mathrm{Ker} K$ and $x\in A$, suppose $S(\gamma_1)=\varphi_{\gamma_1},S(\gamma_2)=\varphi_{\gamma_2}$,
by Eq.~(\ref{W6}), we get
\begin{align*}S(\gamma_1 \gamma_2)(x)&=\gamma_1 \gamma_2s(x)-s(x)
\\&=\gamma_1(\varphi_{\gamma_2}(x)+s(x))-s(x)
\\&=\gamma_1\varphi_{\gamma_2}(x)+\varphi_{\gamma_1}(x)
\\&=\varphi_{\gamma_2}(x)+\varphi_{\gamma_1}(x)
\\&=S(\gamma_2)(x)+S(\gamma_1)(x),\end{align*}
which means that $S$ is a homomorphism of groups.

(ii) For all $\gamma\in \mathrm{Ker}K$, then $K(\gamma)=(p\gamma s,\gamma|_{B})=(I_A,I_B)$. If $S(\gamma)=\varphi_{\gamma}=0$, we get that
 $\varphi_{\gamma}(x)=\gamma s(x)-s(x)=0$, thus $\gamma=I_{E}$, which indicates that $S$ is injective.
Secondly, we check that $S$ is surjective.
 For any $\varphi\in Z_{nab}^{1}(A,B)$, define a linear map $\gamma:E\rightarrow E$ by
  \begin{equation}\label{W7}\gamma(\hat{x})=\gamma(a+s(x))=a+\varphi(x)+s(x),~\forall~\hat{x}\in E.\end{equation}
It is obviously that $(p\gamma s,\gamma|_{B})=(I_A,I_B)$ and $\gamma$ is bijective.
Furthermore, we should prove that $\gamma$ is a homomorphism of the anti-dendriform algebra $E$. Indeed,
 we can take the same procedure of the proof of the converse part of Theorem \ref{EC}.
 Thus, $\gamma\in \mathrm{Ker} K$. It follows that
$S$ is surjective.
In all, $S$ is bijective.
 So $\mathrm{Ker }K\simeq Z_{nab}^{1}(A,B)$.
\end{proof}

Combining Theorem \ref{Wm3} and Proposition \ref{Wm4}, we have

\begin{thm}\label{Wm5} Let $\mathcal{E}:0\longrightarrow B\stackrel{i}{\longrightarrow} E
\stackrel{p}{\longrightarrow}A\longrightarrow0$
be a non-abelian extension of
$(A,\succ_A,\prec_A)$ by $(B,\succ_B,\prec_B)$ together with a section $s$ of $p$. Then there is an exact sequence:
$$0\longrightarrow Z_{nab}^{1}(A,B)\stackrel{i}{\longrightarrow} \mathrm{Aut}_{B}(E)\stackrel{K}{\longrightarrow}\mathrm{Aut}(A)\times \mathrm{Aut}(B)\stackrel{W}{\longrightarrow} H^2_{nab}(A,B).$$
\end{thm}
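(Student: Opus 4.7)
The plan is to assemble the desired exact sequence by splicing Theorem \ref{Wm3} and Proposition \ref{Wm4} together along the common group $\mathrm{Aut}_B^A(E)$, which by definition equals $\mathrm{Ker}\,K$. Theorem \ref{Wm3} supplies the right three-term segment
$$
\mathrm{Aut}_{B}^{A}(E)\stackrel{T}{\longhookrightarrow} \mathrm{Aut}_{B}(E)\stackrel{K}{\longrightarrow}\mathrm{Aut}(A)\times \mathrm{Aut}(B)\stackrel{W}{\longrightarrow} H^2_{nab}(A,B),
$$
while Proposition \ref{Wm4} supplies an isomorphism of abelian groups $S:\mathrm{Aut}_B^A(E)\xrightarrow{\ \sim\ } Z^{1}_{nab}(A,B)$ given by $S(\gamma)(x)=\gamma s(x)-s(x)$.

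First I would define the leftmost map in the claimed sequence as $i:=T\circ S^{-1}:Z^{1}_{nab}(A,B)\longrightarrow \mathrm{Aut}_{B}(E)$. Unwinding the definitions using Eq.~\eqref{W7}, this sends a non-abelian $1$-cocycle $\varphi$ to the automorphism $\gamma_{\varphi}(a+s(x))=a+\varphi(x)+s(x)$, which is manifestly in $\mathrm{Aut}_B(E)$ and satisfies $K(\gamma_\varphi)=(I_A,I_B)$. Injectivity of $i$ is immediate because both $T$ (an inclusion) and $S^{-1}$ are injective; this yields exactness at $Z^{1}_{nab}(A,B)$ (with the convention that the basepoint on the left now reads $0$ rather than $1$, which is harmless since $Z^{1}_{nab}(A,B)$ is an abelian group under pointwise addition).

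Next I would check exactness at $\mathrm{Aut}_{B}(E)$: since $S$ is a bijection, $\mathrm{Im}(i)=\mathrm{Im}(T\circ S^{-1})=\mathrm{Im}(T)=\mathrm{Aut}_{B}^{A}(E)=\mathrm{Ker}(K)$, and the last equality is exactly the exactness statement inside Theorem~\ref{Wm3}. Exactness at $\mathrm{Aut}(A)\times\mathrm{Aut}(B)$, namely $\mathrm{Im}(K)=\mathrm{Ker}(W)$, is inherited verbatim from Theorem~\ref{Wm3} (which in turn rests on Theorem~\ref{CM2} characterizing inducibility in cohomological terms, and Lemma~\ref{CM} ensuring $W$ is well-defined independently of the section).

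There is no genuine obstacle here; the proof is a purely formal diagram substitution. The only subtlety worth flagging is verifying that $i=T\circ S^{-1}$ is a group homomorphism with respect to the additive structure on $Z^{1}_{nab}(A,B)$ and the multiplicative one on $\mathrm{Aut}_{B}(E)$, which follows because $S$ is already established in Proposition~\ref{Wm4} to be a homomorphism of groups from the (abelian) subgroup $\mathrm{Ker}\,K$ onto $Z^{1}_{nab}(A,B)$. Hence the sequence
$$
0\longrightarrow Z_{nab}^{1}(A,B)\stackrel{i}{\longrightarrow} \mathrm{Aut}_{B}(E)\stackrel{K}{\longrightarrow}\mathrm{Aut}(A)\times \mathrm{Aut}(B)\stackrel{W}{\longrightarrow} H^2_{nab}(A,B)
$$
is exact, as claimed.
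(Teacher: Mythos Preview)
Your proof is correct and follows essentially the same approach as the paper: the paper's proof is the single line ``Combining Theorem~\ref{Wm3} and Proposition~\ref{Wm4}, we have\ldots'', and your argument simply spells out how the splicing via $i=T\circ S^{-1}$ works. The additional care you take in verifying that $i$ is a group homomorphism and that exactness transfers through the isomorphism $S$ makes explicit what the paper leaves implicit.
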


\section{Bicrossed product and matched pairs }

Let 
$\Omega(A,V)=(l_{\succ},r_{\succ},l_{\prec},r_{\prec},\rho_{\succ},\mu_{\succ},\rho_{\prec},\mu_{\prec},\varpi_1,\varpi_2,\succ_V,\prec_V)$ be
  be an extending datum of $A$ through $V$. If $\varpi_1$ and $\varpi_2$ are trivial maps, by Proposition \ref{U1}, 
  we get that $\Omega(A,V)$
 is an extending structure of $A$ through $V$ if and only if 
 $(V,\succ_V,\prec_V)$ is an anti-dendriform algebra,
$(A, l_{\succ},r_{\succ},l_{\prec},r_{\prec}) $ is a bimodule of $V$, 
$(V,\rho_{\succ},\mu_{\succ},\rho_{\prec},\mu_{\prec})$ is a bimodule of $A$
and they satisfy Eqs. (\ref{S2})-(\ref{S4}), (\ref{S6}),(\ref{S8}), (\ref{S10}) and (\ref{S13})-(\ref{S15}). 
In this case, the associated
unified product
$A\natural V$ is denoted by $A \bowtie V$, called the bicrossed product of $A$ by $V$ and $(A,V,l_{\succ},r_{\succ},l_{\prec},r_{\prec},\rho_{\succ},\mu_{\succ},\rho_{\prec},\mu_{\prec})$ is
called the matched pair of anti-dendriform algebras $A$ and $V$. 
For the completeness, we write them in detail.

\begin{pro} \label{M0} Let $(A_{1},\succ_{1},\prec_{1})$ and
$(A_{2},\succ_{2},\prec_{2})$ be two anti-dendriform algebras. Suppose that there are linear maps
$l_{\succ_1},r_{\succ_1},l_{\prec_1},r_{\prec_1}:A_1\longrightarrow \hbox{End}(A_2)$
and $l_{\succ_2},r_{\succ_2},l_{\prec_2},r_{\prec_2}:A_2\longrightarrow \hbox{End}(A_1)$. 
Define multiplications $(\succ,\prec)$ on $A_1\oplus A_2$ by 
\begin{equation*}(x,a)\succ(y,b)=(x\succ_{1}y+l_{\succ_2}(a)y+r_{\succ_2}(b)x,a\succ_{2}b+l_{\succ_1}(x)b+r_{\succ_1}(y)a)
,\end{equation*}
\begin{equation*}(x,a)\prec(y,b)=(x\prec_{1}y+l_{\prec_2}(a)y+r_{\prec_2}(b)x,a\prec_{2}b+l_{\prec_1}(x)b+r_{\prec_1}(y)a),~~\forall~x,y\in A_1,a,b\in A_2.
\end{equation*}
Then $(A_{1}\oplus A_{2},\succ,\prec)$
is an anti-dendriform algebra if and only if the following conditions hold:
\begin{enumerate}
	\item $(A_2,l_{\succ_1},r_{\succ_1},l_{\prec_1},r_{\prec_1})$ is a
	representation of $(A_1,\succ_1,\prec_1)$.
	\item $(A_1,l_{\succ_2},r_{\succ_2},l_{\prec_2},r_{\prec_2})$ is a
	representation of $(A_2,\succ_2,\prec_2)$.
	\item The
	following compatible conditions hold:
\begin{align}\label{M1}
&x\succ_{1}(r_{\succ_2}(c)y)+r_{\succ_2}(l_{\succ_1}(y)c)x=-r_{\succ_2}(c)(x\cdot y)\\=&-x\prec_{1}(r_{\cdot_2}(c)y)
-r_{\prec_2}(l_{\cdot_{1}}(y)c)x=r_{\prec_2}(c)(x\prec_{1} y),\nonumber\end{align}
\begin{align}\label{M2}&
x\succ_{1}(l_{\succ_2}(b)z)+r_{\succ_2}(r_{\succ_1}(z)b)x=-(r_{\cdot_2}(b)x)\succ_{1}z
-l_{\succ_2}(l_{\cdot_1}(x)b)z
\\=&-x\prec_{1}(l_{\cdot_2}(b)z)
-r_{\prec_2}(r_{\cdot_1}(z)b)x=(r_{\prec_2}(b)x)\prec_{1} z+l_{\prec_2}(l_{\prec_1}(x)b)z,\nonumber\end{align}
\begin{align}\label{M3}
&l_{\succ_2}(a)(y\succ_{1}z)=-(l_{\cdot_2}(a)y)\succ_{1}z-l_{\succ_2}(r_{\cdot_1}(y)a)z
=-l_{\prec_{2}}(a)(y\cdot_{1}z)
\\\notag=&(l_{\prec_2}(a)y)\prec_{1}z+l_{\prec_2}(r_{\prec_1}(y)a) z,\end{align}
\begin{align}\label{M4}
&l_{\succ_1}(x)(b\succ_{2}c)=-(l_{\cdot_1}(x)b)\succ_{2}c-l_{\succ_1}(r_{\cdot_2}(b)x)c
=-l_{\prec_{1}}(x)(b\cdot_{2}c)\\=&(l_{\prec_1}(x)b)\prec_{2}c+l_{\prec_1}(r_{\prec_2}(b)x)c,\notag\end{align}
\begin{align}\label{M5}&
a\succ_{2}(l_{\succ_1}(y)c)+r_{\succ_1}(r_{\succ_2}(c)y)a=-(r_{\cdot_1}(y)a)\succ_{2}c
-l_{\succ_1}(l_{\cdot_2}(a)y)c\\=&-x\prec_{2}(l_{\cdot_1}(y)c)
-r_{\prec_1}(r_{\cdot_2}(c)y)a=(r_{\prec_1}(y)a)\prec_{2} c+l_{\prec_1}(l_{\prec_2}(a)y)c,\nonumber\end{align}
\begin{align}\label{M6}&a\succ_{2}(r_{\succ_1}(z)b)+r_{\succ_1}(l_{\succ_2}(b)z)a=-r_{\succ_1}(z)(a\cdot b)\notag
\\=&-a\prec_{2}(r_{\cdot_1}(z)b)
-r_{\prec_1}(l_{\cdot_{2}}(b)z)a=r_{\prec_1}(z)(a\prec_{2} b),\nonumber\end{align}
\begin{equation}\label{M7}
r_{\prec_2}(c)(x\succ_{1}y)=x\succ_{1}(r_{\prec_2}(c)y)+r_{\succ_2}(l_{\prec_1}(y)c)x,\end{equation}
\begin{equation}\label{M8}
(r_{\succ_2}(b)x)\prec_{1}z+l_{\prec_2}(l_{\succ_1}(x)b)z=x\succ_{1}(l_{\prec_2}(b)z)+
r_{\succ_2}(r_{\prec_1}(z)b)x,\end{equation}
\begin{equation}\label{M9}
(l_{\succ_2}(a)y)\prec_{1}z+l_{\prec_2}(r_{\succ_1}(y)a)z=
l_{\succ_2}(a)(y\prec_1z),\end{equation}
\begin{equation}\label{M10}
(l_{\succ_1}(x)b)\prec_{2}c+l_{\prec_1}(r_{\succ_2}(b)x)c=
l_{\succ_1}(x)(b\prec_2c),\end{equation}
\begin{equation}\label{M11}
(r_{\succ_1}(y)a)\prec_{2}c+l_{\prec_1}(l_{\succ_2}(a)y)c=a\succ_{2}(l_{\prec_1}(y)c)+
r_{\succ_1}(r_{\prec_2}(c)y)a,\end{equation}
\begin{equation}\label{M12}r_{\prec_1}(z)(a\succ_{2}b)=a\succ_{2}(r_{\prec_1}(z)b)+r_{\succ_1}(l_{\prec_2}(b)z)a,\end{equation}
for any $x,y\in A_{1}$ and $a,b\in A_{2}$, where
\begin{equation*}l_{\cdot_1}=l_{\succ_1}+l_{\prec_1},\ l_{\cdot_2}=l_{\succ_2}+l_{\prec_2},\ 
r_{\cdot_1}=r_{\succ_1}+r_{\prec_1},\ r_{\cdot_2}=r_{\succ_2}+r_{\prec_2}.\end{equation*}
\end{enumerate}
Denote this anti-dendriform algebra by $A_1\bowtie A_2$, and 
$(A_{1},A_{2},l_{\succ_1},r_{\succ_1},l_{\prec_1},r_{\prec_1},l_{\succ_2},r_{\succ_2},l_{\prec_2},r_{\prec_2})$
satisfying the above conditions is called a {\bf matched pair of
anti-dendriform algebras}.
\end{pro}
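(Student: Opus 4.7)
The plan is to recognize Proposition~\ref{M0} as the specialization of Proposition~\ref{U1} in which both cocycle maps vanish and the $V$-multiplications are promoted to a genuine anti-dendriform structure. Concretely, I would take $V=A_{2}$ with $\succ_{V}:=\succ_{2}$, $\prec_{V}:=\prec_{2}$, set $\varpi_{1}=\varpi_{2}=0$, and identify
\begin{equation*}
\rho_{\succ}\leftrightarrow l_{\succ_{2}},\;\; \mu_{\succ}\leftrightarrow r_{\succ_{2}},\;\; \rho_{\prec}\leftrightarrow l_{\prec_{2}},\;\; \mu_{\prec}\leftrightarrow r_{\prec_{2}},\;\; l_{\succ}\leftrightarrow l_{\succ_{1}},\;\; r_{\succ}\leftrightarrow r_{\succ_{1}},\;\; l_{\prec}\leftrightarrow l_{\prec_{1}},\;\; r_{\prec}\leftrightarrow r_{\prec_{1}}.
\end{equation*}
Under this dictionary, the products $\succeq,\preceq$ from \eqref{SE1}--\eqref{SE2} become precisely the $\succ,\prec$ given in the statement of Proposition~\ref{M0}, so $A_{1}\bowtie A_{2}$ is an anti-dendriform algebra if and only if the extending datum is a unified product in the sense of Proposition~\ref{U1}.

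Next I would read off the consequences of conditions \eqref{S1}--\eqref{S17} under the choice $\varpi_{1}=\varpi_{2}=0$. Condition \eqref{S1} is literally statement~(1) of Proposition~\ref{M0}. On the other side, with all $\varpi$'s vanishing and the $A$-entry absent, condition \eqref{S12} collapses to the four-fold identity defining an anti-dendriform algebra on $V=A_{2}$, condition \eqref{S17} to the remaining axiom \eqref{A2}, and conditions \eqref{S6}, \eqref{S8}, \eqref{S10}, \eqref{S11}, \eqref{S16} together yield statement~(2), namely that $(A_{1},l_{\succ_{2}},r_{\succ_{2}},l_{\prec_{2}},r_{\prec_{2}})$ is a representation of $(A_{2},\succ_{2},\prec_{2})$ in the sense of Eqs.~\eqref{R1}--\eqref{R6}. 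Finally, the truly mixed identities \eqref{S2}--\eqref{S4}, \eqref{S13}--\eqref{S15}, together with the remaining portions of \eqref{S5}--\eqref{S11}, translate one-to-one (after the dictionary above) into the compatibility relations \eqref{M1}--\eqref{M12}.

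The proof therefore consists of invoking Proposition~\ref{U1} and then verifying, relation by relation, that under $\varpi_{1}=\varpi_{2}=0$ the seventeen equations of that proposition package into exactly the three clauses of the present statement. The only real labor lies in the systematic bookkeeping: in particular keeping track of which equations (\eqref{S5}--\eqref{S11}) split cleanly between the ``$V$ is a representation of $A$'' conclusion and the genuinely mixed identities \eqref{M1}--\eqref{M12}, and making sure no identity is overcounted. This is the step where a careful tabulation, matching each of the four parts of \eqref{S2}--\eqref{S12} against one of \eqref{M1}--\eqref{M6} and each of \eqref{S13}--\eqref{S17} against \eqref{M7}--\eqref{M12}, is needed; no new algebraic input is required beyond Proposition~\ref{U1}. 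Once that matching is written out, the equivalence is immediate.
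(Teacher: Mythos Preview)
Your approach is exactly the paper's: the proof there reads, in full, ``It is a special case of Proposition~\ref{U1},'' and the paragraph preceding the proposition already spells out the specialization $\varpi_1=\varpi_2=0$ with the same dictionary you give. One small correction to your bookkeeping: with $\varpi_1=\varpi_2=0$, conditions \eqref{S11} and \eqref{S16} become vacuous, while it is \eqref{S5}, \eqref{S7}, \eqref{S9} (together with the analogous $A$-component identities coming from the $((x,0),(0,a),(0,b))$-type cases of \eqref{S19}) that collapse to the representation axioms for $(A_1,l_{\succ_2},r_{\succ_2},l_{\prec_2},r_{\prec_2})$; the equations \eqref{S6}, \eqref{S8}, \eqref{S10} are genuinely mixed and correspond to \eqref{M4}--\eqref{M6}, not to statement~(2).
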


\begin{proof} It is a special case of Proposition \ref{U1}.
\end{proof}

\begin{cor}\label{Ma} If
$(A_{1},A_{2},l_{\succ_1},r_{\succ_1},l_{\prec_1},r_{\prec_1},l_{\succ_2},r_{\succ_2},l_{\prec_2},r_{\prec_2})$ is
a matched pair of
anti-dendriform algebras, then 
$(A_{1},A_{2},l_{\succ_1}+l_{\prec_1},r_{\succ_1}+r_{\prec_1},l_{\succ_2}+l_{\prec_2},r_{\succ_2}+r_{\prec_2})$
is a matched pair of associative algebras.
\end{cor}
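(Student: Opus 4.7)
The plan is to deduce the matched pair of associative algebras from the matched pair of anti-dendriform algebras by passing to the associated sum operations. By hypothesis, $A_1\bowtie A_2$ is an anti-dendriform algebra, so its sum $\bullet=\succeq+\preceq$ endows $A_1\oplus A_2$ with an associative algebra structure. Writing $\succeq$ and $\preceq$ out componentwise from Proposition \ref{M0} and adding, one obtains
\begin{equation*}
(x,a)\bullet(y,b)=\bigl(x\cdot_1 y+(l_{\succ_2}+l_{\prec_2})(a)y+(r_{\succ_2}+r_{\prec_2})(b)x,\ a\cdot_2 b+(l_{\succ_1}+l_{\prec_1})(x)b+(r_{\succ_1}+r_{\prec_1})(y)a\bigr).
\end{equation*}
This is exactly the canonical form of a bicrossed product of the associative algebras $(A_1,\cdot_1)$ and $(A_2,\cdot_2)$ with the proposed module maps.

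From here I would invoke the well-known fact (or record a short direct check) that an associative algebra structure on $A_1\oplus A_2$ of this form is equivalent to the data $(A_1,A_2,l_{\cdot_1},r_{\cdot_1},l_{\cdot_2},r_{\cdot_2})$ forming a matched pair of associative algebras, i.e.\ each of $(A_2,l_{\cdot_1},r_{\cdot_1})$ and $(A_1,l_{\cdot_2},r_{\cdot_2})$ is a bimodule over the respective associative algebra and the usual matching identities hold. The bimodule property for $(A_2,l_{\cdot_1},r_{\cdot_1})$ over $(A_1,\cdot_1)$ is Proposition \ref{zr}(b) applied to the representation $(l_{\succ_1},r_{\succ_1},l_{\prec_1},r_{\prec_1})$ of $(A_1,\succ_1,\prec_1)$; symmetrically for $(A_1,l_{\cdot_2},r_{\cdot_2})$.

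The remaining matching identities then follow automatically from the associativity of $\bullet$, which is guaranteed since $A_1\bowtie A_2$ is anti-dendriform. Alternatively, one can verify them by summing the appropriate pairs among Eqs.~\eqref{M1}--\eqref{M12}: for instance, adding Eq.~\eqref{M1} to Eq.~\eqref{M7} and using $\cdot_1=\succ_1+\prec_1$, $l_{\cdot}=l_\succ+l_\prec$, $r_\cdot=r_\succ+r_\prec$ yields the associative matching condition $x\cdot_1(r_{\cdot_2}(c)y)+r_{\cdot_2}(l_{\cdot_1}(y)c)x=r_{\cdot_2}(c)(x\cdot_1 y)$, and the remaining three associative matching conditions arise symmetrically from the groups \{\eqref{M2},\eqref{M8}\}, \{\eqref{M3},\eqref{M9}\}, \{\eqref{M4},\eqref{M10}\}, \{\eqref{M5},\eqref{M11}\}, \{\eqref{M6},\eqref{M12}\}.

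The main obstacle is purely bookkeeping: matching the many split identities \eqref{M1}--\eqref{M12} to their associative consolidations. The cleanest route is the conceptual one: take associativity of the bicrossed product $(A_1\bowtie A_2,\bullet)$ as a black box, and read off that the component maps are precisely those of the associative bicrossed product. This avoids reverifying anything and makes the proof a one-line appeal to Proposition \ref{M0} together with the definition of the associated associative algebra from Section 2.
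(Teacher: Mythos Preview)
Your proposal is correct and follows essentially the same route as the paper: the paper's proof is a one-line appeal to Remark~\ref{U3}, which records exactly your conceptual observation that summing $\succeq+\preceq$ on the unified (here bicrossed) product yields the associative bicrossed product with structure maps $l_{\succ_i}+l_{\prec_i}$, $r_{\succ_i}+r_{\prec_i}$. Your explicit equation-summing alternative is unnecessary (and the precise pairings among \eqref{M1}--\eqref{M12} are messier than your sketch suggests, since each is a chain of three equalities), but your primary conceptual argument is exactly what the paper intends.
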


\begin{proof} It follows from Remark \ref{U3}.
\end{proof}

{\bf Factorization problem}: Let $A$ and $B$ be two anti-dendriform algebras. Characterize and 
classify all anti-dendriform algebras $C$ that factorize through $A$ and $B$, 
i.e. $C$ contains $A$ and $B$ as anti-dendriform subalgebras such that $C=A+B$ and $A \cap B=\{0\}$.

On the basis of Theorem \ref{U2}, we easily get the following result.
\begin{cor}An anti-dendriform algebra $C$ factorizes through two anti-dendriform algebras $A$ and $B$ 
if and only if there is a matched pair 
$(A_{1},A_{2},l_{\succ_1},r_{\succ_1},l_{\prec_1},r_{\prec_1},l_{\succ_2},r_{\succ_2},l_{\prec_2},r_{\prec_2})$
of anti-dendriform algebras such that $C \cong A \bowtie B$.
\end{cor}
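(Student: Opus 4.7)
The plan is to deduce this corollary directly from Theorem \ref{U2} together with the observation, stated before Proposition \ref{M0}, that an extending structure with trivial cocycles $\varpi_1,\varpi_2$ is precisely a matched pair, whose unified product is the bicrossed product $A\bowtie B$.

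For the ``if'' direction I would argue as follows. Given a matched pair $(A,B,l_{\succ_1},r_{\succ_1},l_{\prec_1},r_{\prec_1},l_{\succ_2},r_{\succ_2},l_{\prec_2},r_{\prec_2})$, Proposition \ref{M0} ensures that $C:=A\bowtie B$ is an anti-dendriform algebra with underlying vector space $A\oplus B$. Reading off the product formulas $(x,0)\succ(y,0)=(x\succ_1 y,0)$ and $(0,a)\succ(0,b)=(0,a\succ_2 b)$ (and similarly for $\prec$) shows that $\iota_A:x\mapsto(x,0)$ and $\iota_B:a\mapsto(0,a)$ are injective homomorphisms identifying $A$ and $B$ with anti-dendriform subalgebras of $C$, and clearly $C=\iota_A(A)+\iota_B(B)$ with $\iota_A(A)\cap\iota_B(B)=\{0\}$. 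Hence $C$ factorizes through $A$ and $B$.

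For the ``only if'' direction, suppose $C$ factorizes through $A$ and $B$, so that $C=A\oplus B$ as vector spaces. Let $p:C\to A$ be the projection with kernel $V=B$; this is the section used in the proof of Theorem \ref{U2}. That theorem furnishes an extending structure $\Omega(A,V)=(l_\succ,r_\succ,l_\prec,r_\prec,\rho_\succ,\mu_\succ,\rho_\prec,\mu_\prec,\varpi_1,\varpi_2,\succ_V,\prec_V)$ of $A$ through $V$ and an isomorphism of anti-dendriform algebras $C\cong A\natural V$ stabilizing $A$ and co-stabilizing $V$. The crucial extra information here is that $B$ is \emph{also} an anti-dendriform subalgebra of $C$: for all $a,b\in B$ we have $a\succeq b,\,a\preceq b\in B=\ker p$. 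Consulting Eqs.~\eqref{S22}--\eqref{S23}, this forces
\[
\varpi_1(a,b)=p(a\succeq b)=0,\qquad \varpi_2(a,b)=p(a\preceq b)=0,
\]
while $a\succ_V b=a\succeq b$ and $a\prec_V b=a\preceq b$ recover the original operations on $B$.

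With $\varpi_1=\varpi_2=0$, the remarks preceding Proposition \ref{M0} apply: the remaining axioms of Proposition \ref{U1} reduce to $(V,\succ_V,\prec_V)$ being an anti-dendriform algebra, $(V,l_\succ,r_\succ,l_\prec,r_\prec)$ being a bimodule of $A$, $(A,\rho_\succ,\mu_\succ,\rho_\prec,\mu_\prec)$ being a bimodule of $V$, together with the compatibility relations (\ref{S2})--(\ref{S4}), (\ref{S6}), (\ref{S8}), (\ref{S10}) and (\ref{S13})--(\ref{S15}). Under the renaming $l_{\succ_1}=l_\succ$, $r_{\succ_1}=r_\succ$, $l_{\prec_1}=l_\prec$, $r_{\prec_1}=r_\prec$ and $l_{\succ_2}=\rho_\succ$, $r_{\succ_2}=\mu_\succ$, $l_{\prec_2}=\rho_\prec$, $r_{\prec_2}=\mu_\prec$, these conditions are exactly the matched pair axioms (\ref{M1})--(\ref{M12}) of Proposition \ref{M0}, and the product $\succeq,\preceq$ on $A\natural V$ coincides with the bicrossed product multiplication. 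Thus $C\cong A\bowtie B$, as required. The only subtle point to check is the bookkeeping that identifies the extending-structure axioms with trivial $\varpi_i$ and the matched-pair axioms; since both are exhaustive translations of associativity for the same direct-sum multiplication, this is automatic, so I anticipate no genuine obstacle.
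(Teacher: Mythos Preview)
Your proof is correct and is exactly the elaboration the paper intends: the paper's own ``proof'' is the single remark ``On the basis of Theorem \ref{U2}, we easily get the following result,'' and you have spelled out precisely that argument --- applying Theorem \ref{U2} with the projection $p:C\to A$ along $B$, observing that $B$ being a subalgebra forces $\varpi_1=\varpi_2=0$ via \eqref{S22}--\eqref{S23}, and invoking the discussion before Proposition \ref{M0} to identify the resulting extending structure with a matched pair. One terminological quibble: $p$ is the \emph{projection} (retraction) onto $A$, not a ``section''; but this is cosmetic and does not affect the argument.
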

 
\section{Anti-dendriform D-bialgebras and Yang-Baxter equations }

In this section, we introduce the notion of anti-dendriform D-bialgebras as the bialgebra structures corresponding 
   to double construction of associative algebras with respect to the commutative Cone cocycles. Both of them
are interpreted in terms of certain matched pairs of associative algebras as well 
as the compatible anti-dendriform algebras. The study of coboundary case leads to the introduction of the AD-YBE, whose
skew-symmetric solutions give coboundary anti-dendriform D-bialgebras. 
The notion of $\mathcal O$-operators of
anti-dendriform algebras is introduced to construct skew-symmetric solutions
of the AD-YBE. We also characterize the relationship between the skew-symmetric solutions of AD-YBE and $\mathcal O$-operators.

\subsection{Anti-dendriform D-bialgebras }

 Let $(A, \cdot)$ be an associative algebra and $\omega$ be a bilinear form on $(A, \cdot)$. If $\omega$ is
symmetric and satisfies 
\begin{equation}\omega(x \cdot y, z) + \omega(y \cdot z, x)+ \omega(z \cdot x, y)=0, \forall~x, y, z \in A,\end{equation}
 then $\omega$ is called a commutative Connes cocycle.

\begin{defi} Let $(A_{1},\cdot)$ be an associative algebra. Suppose that there is an associative algebra structure $\ast$ on the dual space
$A_{1}^{*}$. We call $(A,\cdot,\omega)$ a double construction of the commutative Connes cocycle if it satisfies the conditions:
\begin{enumerate}
\item $ A=A_1\oplus A_{1}^{*}$ as direct sum of vector spaces.
\item $A$ is an associative algebra and $A_1, A_{1}^{*}$ are subalgebras of $A$.
\item $\omega$ is the natural symmetric bilinear form on $A_1\oplus A_{1}^{*}$ given by 
\begin{equation}\label{Co1}\omega(x+a, y+b) =\langle x,b\rangle+\langle a,y\rangle, \forall~x, y\in A_1,~a, b\in A_{1}^{*}\end{equation}
and $\omega$ is a commutative Connes cocycle on $A$.
\end{enumerate}
Denote it by $(A_1\oplus A_{1}^{*},A_1, A_{1}^{*},\omega)$. 
\end{defi}

\begin{pro} \cite{15}
Let $(A,\cdot)$ be an associative algebra and $\omega$ be a nondegenerate commutative
Connes cocycle on $(A,\cdot)$. Then there exists a compatible anti-dendriform algebra structure $(A,\succ,\prec)$
 on $(A,\cdot)$ defined by
\begin{equation} \label{C2}\omega (x \prec y, z)=-\omega(y, z\cdot x), \ \  \ \omega(x \succ y, z)=-\omega(x, y\cdot z), ~\forall~x, y, z \in A.\end{equation}
\end{pro}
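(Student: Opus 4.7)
The plan is to use the nondegeneracy of $\omega$ as the organizing principle throughout: the two displayed formulas uniquely determine $\succ$ and $\prec$ as bilinear operations $A \times A \to A$, and every subsequent identity between elements of $A$ is then proved by pairing with an arbitrary $w \in A$ through $\omega$ and invoking nondegeneracy to strip the pairing.

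First I would verify the compatibility $x \cdot y = x \succ y + x \prec y$. Pairing with $z$ gives $\omega(x \succ y + x \prec y, z) = -\omega(x, y\cdot z) - \omega(y, z\cdot x)$; by the symmetry of $\omega$ this rewrites as $-\omega(y\cdot z, x) - \omega(z\cdot x, y)$, which by the commutative Connes cocycle identity applied to the triple $(x,y,z)$ equals $\omega(x\cdot y, z)$. Nondegeneracy closes the step.

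For the anti-dendriform axioms \eqref{A1} and \eqref{A2}, the strategy is to pair each claimed equality with $\omega(\cdot, w)$ for arbitrary $w$, unfold every occurrence of $\succ$ and $\prec$ via the defining formulas, and reduce to statements involving only $\omega$ and the associative product, using symmetry of $\omega$, associativity of $\cdot$, and several well-chosen applications of the Connes cocycle. The cleanest case is the middle equality $-(x\cdot y)\succ z = -x\prec(y\cdot z)$ in \eqref{A1}, which is equivalent to $\omega(x\cdot y,\, z\cdot w) = \omega(y\cdot z,\, w\cdot x)$; applying the cocycle to the triples $(x,y,z\cdot w)$ and $(y,z,w\cdot x)$ and regrouping via associativity reduces the difference to $\omega(x\cdot y\cdot z,\, w) - \omega(z\cdot w\cdot x,\, y)$, which a further round of cocycle manipulations on the triples $(xy,z,w)$ and $(zw,x,y)$ shows equals its own negative, yielding the claim.

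The main obstacle is precisely this bookkeeping for \eqref{A1}, which asserts that four expressions coincide. My plan is to establish the middle equality first as above, and then use compatibility $\cdot = \succ + \prec$ (which allows substituting $y\succ z = y\cdot z - y\prec z$ or $x\prec y = x\cdot y - x\succ y$ inside the pairings) to reduce the two outer equalities $x\succ(y\succ z) = -(x\cdot y)\succ z$ and $(x\prec y)\prec z = -x\prec(y\cdot z)$ to the same style of cyclic fourfold identity on $\omega$. Axiom \eqref{A2}, $(x\succ y)\prec z = x\succ(y\prec z)$, unfolds in the same way to $\omega(z,\, w\cdot(x\succ y)) = \omega(x,\, (y\prec z)\cdot w)$, which after one application each of the $\succ$- and $\prec$-definitions falls into the same framework. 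The persistent technical difficulty is sign management and choosing the right cocycle triple at each reduction; tabulating at the outset the one-step equivalences $\omega(a, b\cdot c) = -\omega(a\succ b, c) = -\omega(c\prec a, b)$ together with the symmetry of $\omega$ should keep the computation organized and make each axiom follow by a short chain of substitutions.
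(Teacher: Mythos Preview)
The paper does not supply its own proof of this proposition: it is quoted from \cite{15} and stated without argument, so there is nothing in the present paper to compare your attempt against. Your approach---defining $\succ,\prec$ via nondegeneracy, checking $\cdot=\succ+\prec$ by the cocycle identity, and then verifying \eqref{A1}--\eqref{A2} by pairing with an arbitrary $w$---is exactly the standard route used in \cite{15} and is correct in outline.

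One caution on your sketch for the middle equality of \eqref{A1}: the step where you reduce $\omega(x\cdot y,\,z\cdot w)-\omega(y\cdot z,\,w\cdot x)$ to ``its own negative'' via the two triples $(xy,z,w)$ and $(zw,x,y)$ yields $2\bigl(\omega(yzw,x)-\omega(wxy,z)\bigr)=0$, so as written it needs $\operatorname{char}k\neq 2$. This is harmless in the usual setting (and the source \cite{15} works in that context), but if you want a characteristic-free argument you should instead combine the cocycle identities for the triples $(x,y,zw)$ and $(xy,z,w)$ to get $\omega(xy,zw)$ two ways and cancel directly, rather than passing through an antisymmetry. The rest of your plan---using compatibility to trade $\succ$ for $\cdot-\prec$ in the outer equalities, and the tabulated rewriting rules $\omega(a,b\cdot c)=-\omega(a\succ b,c)=-\omega(c\prec a,b)$---is sound and will carry the computation through.
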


\begin{pro}
 Let $(A\oplus A^{*},A, A^{*},\omega)$ be a double construction of the commutative Connes cocycle.
  Then there exists a compatible anti-dendriform algebra structure $(\succ,\prec)$ on $A\oplus A^{*}$ given by Eq. \eqref{C2}.
Furthermore, $(A,\succ_A,\prec_A)$ and $(A^{*},\succ_{A^{*}},\prec_{A^{*}})$ 
are anti-dendriform subalgebras whose associated associative algebras are $(A,\cdot)$ and $(A^{*},\ast)$ respectively, where
$\succ_A=\succ|_{A\otimes A},\prec_A=\prec|_{A\otimes A}$ and
 $\succ_{A^{*}}=\succ|_{A^{*}\otimes A^{*}},\prec_{A^{*}}=\prec|_{A^{*}\otimes A^{*}}$.
\end{pro}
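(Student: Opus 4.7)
The plan is to obtain the anti-dendriform structure on the whole space $A\oplus A^{*}$ from the previous proposition, and then show that the two Lagrangian-type subspaces $A$ and $A^{*}$ are closed under $\succ$ and $\prec$, so that the restrictions inherit the compatibility with $\cdot$ on $A$ and with $\ast$ on $A^{*}$.

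First, I would note that the bilinear form $\omega$ given by \eqref{Co1} is symmetric, and its nondegeneracy follows from the nondegeneracy of the canonical pairing between $A$ and $A^{*}$. By hypothesis $\omega$ is a commutative Connes cocycle on $(A\oplus A^{*},\cdot)$, so the previous proposition directly produces a compatible anti-dendriform algebra structure $(A\oplus A^{*},\succ,\prec)$ on $(A\oplus A^{*},\cdot)$, determined by \eqref{C2}. In particular $x\succ y+x\prec y=x\cdot y$ for all $x,y\in A\oplus A^{*}$.

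The key step is to show that $A$ and $A^{*}$ are stable under both $\succ$ and $\prec$. For any $x,y\in A$ and any $z\in A$, since $A$ is a subalgebra of $(A\oplus A^{*},\cdot)$ we have $y\cdot z\in A$, and by the explicit form \eqref{Co1} of $\omega$ we get $\omega(x,y\cdot z)=0$. Hence by \eqref{C2},
\[
\omega(x\succ y,z)=-\omega(x,y\cdot z)=0 \qquad \text{for every } z\in A.
\]
Writing $x\succ y=u+a$ with $u\in A$, $a\in A^{*}$, this says $\langle a,z\rangle=0$ for all $z\in A$, whence $a=0$ by nondegeneracy of the pairing. Thus $x\succ y\in A$. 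The same argument with $\prec$ in place of $\succ$ shows $x\prec y\in A$, and the symmetric argument (pairing against $z\in A^{*}$ and using that $A^{*}$ is a subalgebra) shows that $A^{*}$ is closed under $\succ$ and $\prec$ as well.

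Having established stability, the restrictions $\succ_{A}:=\succ|_{A\otimes A}$ and $\prec_{A}:=\prec|_{A\otimes A}$ define an anti-dendriform algebra structure on $A$ (since the defining identities \eqref{A1}--\eqref{A2} on $A\oplus A^{*}$ restrict to $A$). Its associated associative product is
\[
x\succ_{A} y+x\prec_{A} y=x\succ y+x\prec y=x\cdot y,
\]
which is precisely the original product on $A$, so $(A,\succ_{A},\prec_{A})$ is a compatible anti-dendriform structure on $(A,\cdot)$; the same reasoning applied to $A^{*}$ yields the compatible structure $(A^{*},\succ_{A^{*}},\prec_{A^{*}})$ on $(A^{*},\ast)$. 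The main obstacle is the stability step, which is purely a matter of matching the support of the defining formula \eqref{C2} with the block structure of $\omega$; once that is in place, everything else is immediate from the previous proposition.
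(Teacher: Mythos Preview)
Your proof is correct and follows essentially the same approach as the paper: invoke the previous proposition to obtain $(\succ,\prec)$ on $A\oplus A^{*}$, then use \eqref{C2} together with the block form \eqref{Co1} of $\omega$ to show that the $A^{*}$-component (respectively $A$-component) of $x\succ y$ vanishes when $x,y\in A$ (respectively $x,y\in A^{*}$), and conclude that the restrictions give compatible anti-dendriform structures on $(A,\cdot)$ and $(A^{*},\ast)$. Your write-up is in fact slightly more careful than the paper's, which drops the minus sign from \eqref{C2} in the displayed computation.
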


\begin{proof}
The first part is given in the above Proposition. For all $x,y\in A$, since $x\succ y\in A\oplus A^{*}$, suppose that
$x\succ y=u+u^{*}$ with $u\in A,u^{*}\in A^{*}$. Then 
\begin{equation*} \langle u^{*}, z\rangle=\omega(x\succ_{A}y, z)=\omega(x,y\cdot z)=0
,~\forall~ z \in A,\end{equation*}
which indicates that $u^{*}=0$, that is, $x\succ_{A}y\in A$. Analogously,
$x\prec_{A}y\in A$. Thus, $(A,\succ_A,\prec_A)$ is a subalgebra of $A\oplus A^{*}$.
By the same token, $(A^{*},\succ_{A^{*}},\prec_{A^{*}})$ is a subalgebra of $A\oplus A^{*}$.
It is straightforward to prove that the associated associative algebras of 
$(A,\succ_A,\prec_A)$ and $(A^{*},\succ_{A^{*}},\prec_{A^{*}})$ 
are $(A,\cdot)$ and $(A^{*},\ast)$ respectively.
\end{proof}

Recall the matched pairs of associative algebras studied in \cite{10}. 
Let $(A_{1},\circ_{1})$ and
$(A_{2},\circ_{2})$ be two associative algebras. Suppose that there exist four linear maps
$l_{1},r_{1}:A_1\longrightarrow \hbox{End}(A_2)$
and $l_{2},r_{2}:A_2\longrightarrow \hbox{End}(A_1)$ such that
$(A_{2},l_{1},r_{1})$ and
 $(A_{1},l_{2},r_{2})$ are
 representations of the anti-dendriform algebras $(A_{1},\circ_{1})$
and $(A_{2},\circ_{2})$ respectively, and they satisfying
the following conditions:
\begin{align}\label{AM1}l_{1}(x)(a\circ_{2}b)=l_{1}(r_{2}(a)x)b+(l_{1}(x)a)\circ_2b,\end{align}
\begin{align}\label{AM2}r_{1}(x)(a\circ_{2}b)=r_{1}(l_{2}(b)x)a+a\circ_2(r_{1}(x)b),\end{align}
\begin{align}\label{AM3}l_{2}(a)(x\circ_{1}y)=l_{2}(r_{1}(x)a)y+(l_{2}(a)x)\circ_1y,\end{align}
  \begin{align}\label{AM4}r_{2}(a)(x\circ_{1}y)=r_{2}(l_{1}(y)a)x+x\circ_1(r_{2}(a)y),\end{align}
  \begin{align}\label{AM5}l_{1}(l_2(a)x)b+(r_1(x)a)\circ_{2}b-r_{1}(r_{2}(b)x)a-a\circ_2(l_{1}(x)b)=0,\end{align}
   \begin{align}\label{AM6}l_{2}(l_1(x)a)y+(r_2(a)x)\circ_{1}y-r_{2}(r_{1}(y)a)x-x\circ_1(l_{2}(b)x)=0,\end{align} 
   for all $x,y\in A_1$ and $a,b\in A_2$. Then 
 $(A_{1},A_{2},l_{1},r_{1},l_{2},r_{2})$ is called a matched pair of associative algebras.
 
\begin{thm}
 Let $(A,\succ_{A},\prec_{A})$ and $(A^{*},\succ_{A^{*}},\prec_{A^{*}})$ be two anti-dendriform algebras and their associated
 associative algebras be $(A,\cdot)$ and $(A^{*},\ast)$ respectively. Then the following conditions are equivalent:
\begin{enumerate}
\item There is a double construction $(A\oplus A^{*},\omega,A, A^{*})$ of the commutative Connes cocycle
 such that the compatible anti-dendriform algebra $(A\oplus A^{*},\succ,\prec)$ defined by Eq. \eqref{C2} contains
 $(A,\succ_{A},\prec_{A})$ and $(A^{*},\succ_{A^{*}},\prec_{A^{*}})$ as anti-dendriform subalgebras.
 \item $(A,A^{*},-(R_{\prec_A}^{*}+R_{\succ_A}^{*}),L_{\prec_A}^{*},R_{\succ_A}^{*},-(L_{\prec_A}^{*}+L_{\succ_A}^{*}),
 -(R_{\prec_{A^{*}}}^{*}+R_{\succ_{A^{*}}}^{*}),L_{\prec_{A^{*}}}^{*},R_{\succ_{A^{*}}}^{*},-(L_{\prec_{A^{*}}}^{*}+L_{\succ_{A^{*}}}^{*})$ 
 is a matched pair of
anti-dendriform algebras.
 \item $(A,A^{*},-R_{\prec_A}^{*},-L_{\succ_A}^{*},-R_{\prec_{A^{*}}}^{*},-L_{\succ_{A^{*}}}^{*})$ is a matched pair of
associative algebras.
 \end{enumerate}
\end{thm}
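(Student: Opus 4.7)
The plan is to establish $(1) \Longleftrightarrow (2)$ by direct translation between the double construction and the matched pair of anti-dendriform algebras, and $(2) \Longleftrightarrow (3)$ via the passage from an anti-dendriform matched pair to its associated associative matched pair.

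For $(1) \Longrightarrow (2)$, starting from a double construction with the compatible anti-dendriform structure \eqref{C2}, I would compute the mixed products $x\succ a,\ a\succ x,\ x\prec a,\ a\prec x$ for $x\in A$, $a\in A^{*}$. Each such mixed product lies in $A\oplus A^{*}$ and decomposes uniquely, its two components being recovered by pairing with elements from the opposite summand. Using \eqref{C2} together with the symmetry and cocycle identity for $\omega$, each such pairing rewrites as an evaluation of $L_{\succ_{A}},R_{\succ_{A}},L_{\prec_{A}},R_{\prec_{A}}$ or their $A^{*}$-counterparts, and one checks directly that the eight resulting action maps are exactly those listed in (2); the matched-pair compatibility conditions (M1)--(M12) are then inherited from the anti-dendriform axioms on $A\oplus A^{*}$ applied to mixed triples. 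The converse $(2)\Longrightarrow (1)$ runs in reverse: form the bicrossed product $A\bowtie A^{*}$ from Proposition \ref{M0} and equip it with the nondegenerate symmetric bilinear form $\omega$ from \eqref{Co1}; the identity \eqref{C2} then splits, according to which of the three arguments lies in $A$ versus $A^{*}$, into purely internal pieces (holding by hypothesis) and mixed pieces, which are precisely the defining equations of the dual actions $-(R_{\prec_{A}}^{*}+R_{\succ_{A}}^{*})$, $L_{\prec_{A}}^{*}$, $R_{\succ_{A}}^{*}$, $-(L_{\prec_{A}}^{*}+L_{\succ_{A}}^{*})$ and their $A^{*}$-analogues. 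Once \eqref{C2} is established for all triples, summing the two identities and symmetrizing in the three arguments yields the commutative Connes cocycle condition for $\omega$ on the associated associative algebra.

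For $(2) \Longleftrightarrow (3)$, the implication $(2)\Longrightarrow (3)$ is immediate from Corollary \ref{Ma} together with the elementary identities $-(R_{\prec_A}^{*}+R_{\succ_A}^{*})+R_{\succ_A}^{*}=-R_{\prec_A}^{*}$ and $L_{\prec_A}^{*}-(L_{\prec_A}^{*}+L_{\succ_A}^{*})=-L_{\succ_A}^{*}$ (and symmetrically on the $A^{*}$-side), which show that the sums of the $\succ$- and $\prec$-actions in (2) are exactly the dual actions in (3). For $(3)\Longrightarrow (2)$ the cleanest route is to chain $(3)\Longrightarrow (1)\Longrightarrow (2)$, invoking the classical correspondence of \cite{10} between matched pairs of associative algebras with dual actions $(-R^{*}_{\prec},-L^{*}_{\succ})$ and double constructions with respect to the commutative Connes cocycle, and then using the already-established $(1)\Longrightarrow (2)$. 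The main obstacle will be the bookkeeping in $(1)\Longrightarrow (2)$: each of the axioms in \eqref{A1} bundles four equalities at once, so the twelve conditions (M1)--(M12) must be unpacked systematically and matched against the corresponding mixed instances of these axioms on $A\oplus A^{*}$, with the cocycle identity for $\omega$ invoked carefully to keep the $\succ$- and $\prec$-parts balanced; this is conceptually transparent but computationally extensive, and no new ingredient beyond those already used in the preceding steps should be required.
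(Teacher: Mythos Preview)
Your proposal is correct, and the core computations you identify are the right ones. The paper's proof, however, is organized more economically as a cycle $(a)\Rightarrow(b)\Rightarrow(c)\Rightarrow(a)$ rather than as two bidirectional equivalences, which saves you the separate verification of $(2)\Rightarrow(1)$.

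Two specific points of comparison. First, in your $(1)\Rightarrow(2)$ you propose to verify the matched-pair axioms (M1)--(M12) by unpacking the anti-dendriform identities on mixed triples; the paper shortcuts this entirely by invoking Proposition~\ref{M0}, which says that once you know $(A\oplus A^{*},\succ,\prec)$ is an anti-dendriform algebra containing $A$ and $A^{*}$ as complementary subalgebras, the existence of \emph{some} matched pair is automatic, and one only has to identify the eight action maps via the pairing computation you describe. Second, for $(3)\Rightarrow(1)$ the paper does not cite the result from \cite{10} but instead verifies the Connes cocycle identity directly on the associative bicrossed product $A\bowtie A^{*}$; this is a short computation using $\langle -R_{\prec_A}^{*}(x)a,y\rangle=-\langle a,y\prec_A x\rangle$ and its analogues. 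Both approaches are valid; yours trades a small amount of extra bookkeeping for a more self-contained logical structure.
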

\begin{proof}
$(a)\Longrightarrow (b)$ In the light of Proposition \ref{M0}, there are linear maps 
$l_{\succ_A},r_{\succ_A},l_{\prec_A},r_{\prec_A}:A\longrightarrow \hbox{End}(A^{*})$ and
$l_{\succ_{A^{*}}},r_{\succ_{A^{*}}},l_{\prec_{A^{*}}},r_{\prec_{A^{*}}}:A^{*}\longrightarrow \hbox{End}(A)$ such that
$(A,A^{*},l_{\succ_A},r_{\succ_A},l_{\prec_A},r_{\prec_A}, l_{\succ_{A^{*}}},r_{\succ_{A^{*}}},l_{\prec_{A^{*}}},r_{\prec_{A^{*}}})$ 
is a matched pair of
anti-dendriform algebras and
\begin{equation}
x\succ b=r_{\succ_{A^{*}}}(b)x+l_{\succ_A}(x)b,\ \ \ b\succ x=l_{\succ_{A^{*}}}(b)x+r_{\succ_A}(x)b,
\end{equation}
\begin{equation}
x\prec b=r_{\prec_{A^{*}}}(b)x+l_{\prec_A}(x)b,\ \ \ b\succ x=l_{\prec_{A^{*}}}(b)x+r_{\prec_A}(x)b,
\end{equation}
for all $x\in A$ and $b\in A^{*}$.
Then we obtain,
\begin{align*}\langle l_{\succ_A}(x)b,y\rangle &=\omega (x\succ b,y)=-\omega(b,y\cdot x)=-\langle b,y\prec x+y\succ x\rangle
\\&=-\langle b,R_{\succ_A}(x)y+R_{\prec_A}(x)y\rangle\\&=-\langle R_{\succ_A}^{*}(x)b+R_{\prec_A}^{*}(x)b,y \rangle,\end{align*}

which yields that
$l_{\succ_A}=-R_{\succ_A}^{*}-R_{\prec_A}^{*}$.
Analogously, $r_{\succ_A}=L_{\prec_A}^{*},l_{\prec_A}=R_{\succ_A}^{*},r_{\prec_A}=-(L_{\prec_A}^{*}+L_{\succ_A}^{*}),
l_{\succ_{A^{*}}}=-R_{\succ_{A^{*}}}^{*}-R_{\prec_{A^{*}}}^{*}
,r_{\succ_{A^{*}}}=L_{\prec_{A^{*}}}^{*},l_{\prec_{A^{*}}}=R_{\succ_{A^{*}}}^{*},r_{\prec_{A^{*}}}=-(L_{\prec_{A^{*}}}^{*}+L_{\succ_{A^{*}}}^{*})
$. Thus, (b) holds.

$(b)\Longrightarrow (c)$ It can be obtained by Corollary \ref {Ma}.

$(c)\Longrightarrow (a)$  Assume that $(A,A^{*},-R_{\prec_A}^{*},-L_{\succ_A}^{*},-R_{\prec_{A^{*}}}^{*},-L_{\succ_{A^{*}}}^{*})$ is
a matched pair of associative algebras. Then $(A\bowtie A^{*},\cdot)$ is an associative algebra with $\cdot$ defined as follows: 
\begin{equation*}x\cdot a=-L_{\succ_{A^{*}}}^{*}(a) x-R_{\prec_A}^{*}(x)a,\ \ \ 
a\cdot x=-R_{\prec_{A^*}}^{*}(a) x-L_{\succ_A}^{*}(x)a, \ \ \forall ~x\in A, a\in A^{*}.\end{equation*}
According to Eq. \eqref{Co1}, we have
\begin{align*}&\omega(x\cdot a,y)+\omega(a\cdot y,x)+\omega(y\cdot x,a)\\
=&\omega(-L_{\succ_{A^{*}}}^{*}(a) x-R_{\prec_A}^{*}(x)a,y)+\omega(-R_{\prec_{A^*}}^{*}(a) y-L_{\succ_A}^{*}(y)a,x)+\omega(y\cdot x,a)
\\=&\langle -R_{\prec_A}^{*}(x)a,y\rangle+\langle -L_{\succ_A}^{*}(y)a,x\rangle+\langle y\cdot x,a\rangle
\\=&\langle a,-y\prec_A x\rangle+\langle a,-y\succ_A x\rangle+\langle y\cdot x,a\rangle\\&=0,\end{align*}
for all $x,y\in A, a\in A^{*}$, 
 which implies that $\omega$ defined by Eq. \eqref{Co1}
is a commutative Connes cocycle.
Thus, $(A\oplus A^{*},\omega,A, A^{*})$ is a double construction of the commutative Connes cocycle.
\end{proof}

\begin{defi} \label{DB1} An anti-dendriform coalgebra is a triple $(A,\Delta_{\succ},\Delta_{\prec})$, where
$A$ is a vector space and $\Delta_{\succ},\Delta_{\prec}:A\longrightarrow A\otimes A$ are linear maps such that
the following conditions hold:
\begin{equation}\label{Ca1}
( \Delta_{\succ}\otimes I)\Delta_{\prec}=(I\otimes\Delta_{\prec})\Delta_{\succ},\end{equation}
\begin{equation}\label{Ca2}
(I\otimes \Delta_{\succ})\Delta_{\succ}=-(\Delta\otimes I)\Delta_{\succ}=
( \Delta_{\prec}\otimes I)\Delta_{\prec}=-(I\otimes\Delta)\Delta_{\prec},\end{equation}
where $\Delta=\Delta_{\succ}+\Delta_{\prec}.$
\end{defi}

\begin{defi} \label{DB2} An anti-dendriform D-bialgebra is a quintuple $(A,\succ,\prec,\Delta_{\succ},\Delta_{\prec})$, 
such that $(A,\succ,\prec)$ is an anti-dendriform algebra, $(A,\Delta_{\succ},\Delta_{\prec})$ is 
an anti-dendriform coalgebra, and the following compatible conditions hold:
\begin{equation}\label{D1}\Delta_{\prec}(x\cdot y)=(R_{\cdot}(y)\otimes I)\Delta_{\prec}(x)-(I\otimes L_{\succ} (x))\Delta_{\prec}(y),\end{equation}
\begin{equation}\label{D2}\Delta_{\succ}(x\cdot y)=(I\otimes L_{\cdot}(x))\Delta_{\succ}(y)-(R_{\prec} (y)\otimes I)\Delta_{\succ}(x),\end{equation}
\begin{equation}\label{D3}
(I\otimes R_{\cdot}(y))\Delta_{\succ}(x)+(L_{\succ} (y)\otimes I)\Delta_{\succ}(x)-\tau(I\otimes R_{\prec} (x))\Delta_{\prec}(y)-\tau
(L_{\cdot}(x)\otimes I)\Delta_{\prec}(y)=0,\end{equation}
\begin{equation}\label{D4}\Delta(x\prec y)=(R_{\prec} (y)\otimes I)\Delta(x)-(I\otimes L_{\prec} (x))\Delta_{\succ}(y),\end{equation}
\begin{equation}\label{D5}\Delta(x\succ y)=(I\otimes L_{\succ} (x))\Delta(y)-(R_{\succ} (y)\otimes I)\Delta_{\prec}(x),\end{equation}
\begin{equation} \label{D6}(L_{\succ} (x)\otimes I)\Delta(y)+(R_{\succ} (y)\otimes I)\tau\Delta_{\succ}(x)
-(I\otimes L_{\prec} (y))\tau\Delta_{\prec}(x)-(I\otimes R_{\prec} (x))\Delta(y)=0,\end{equation}
where $\cdot=\succ+\prec,~\Delta=\Delta_{\succ}+\Delta_{\prec}$ and $R_{\cdot}=R_{\prec}+R_{\succ},~L_{\cdot}=L_{\prec}+L_{\succ}$.
\end{defi}

\begin{thm}
Let $(A,\succ_A,\prec_A)$ be an anti-dendriform algebra. Assume that there is an anti-dendriform algebra
 structure $(\succ_{A^{*}},\prec_{A^{*}})$
on the dual space $A^{*}$. Let $(A,\cdot)$ and $(A^{*},\ast)$ be the associated associative algebra of $(A,\succ_A,\prec_A)$
and $(A^{*},\succ_{A^{*}},\prec_{A^{*}})$ respectively. Suppose that $\Delta_{\succ},\Delta_{\prec}:A\longrightarrow A\otimes A$
and $\delta_{\succ},\delta_{\prec}:A^{*}\longrightarrow A^{*}\otimes A^{*}$ are the dual of $\succ_{A^{*}},\prec_{A^{*}}$
 and $\succ_A,\prec_A$ respectively. Then the following conditions are equivalent:
 \begin{enumerate}
\item $(A,\succ_A,\prec_A,\Delta_{\succ},\Delta_{\prec})$ is an anti-dendriform D-bialgebra.
\item Eqs.\eqref{D1}-\eqref{D3} hold and the following equations hold:
\begin{equation}\label{D7}\delta_{\prec}(a\ast b)=(R_{\ast}(b)\otimes I)\delta_{\prec}(a)-(I\otimes L_{\succ_{A^{*}}} (a))\delta_{\prec}(b),\end{equation}
\begin{equation}\label{D8}\delta_{\succ}(a\ast b)=(I\otimes L_{\ast}(a))\delta_{\succ}(b)-(R_{\prec_{A^{*}}} (b)\otimes I)\delta_{\succ}(a),\end{equation}
\begin{equation}\label{D9}
(I\otimes R_{\ast}(b))\delta_{\succ}(a)+(L_{\succ_{A^{*}}} (b)\otimes I)\delta_{\succ}(a)-\tau(I\otimes R_{\prec_{A^{*}}} (a))
\delta_{\prec}(b)-\tau(L_{\ast}(a)\otimes I)\delta_{\prec}(b)=0.\end{equation}
\item $(A,A^{*},-R_{\prec_A}^{*},-L_{\succ_A}^{*},-R_{\prec_{A^*}}^{*},-L_{\succ_{A^*}}^{*})$ is a matched pair of
associative algebras,
 \end{enumerate}
 where $L_{\ast}=L_{\prec_{A^{*}}}+L_{\succ_{A^*}},~~R_{\ast}=R_{\prec_{A^*}}+R_{\succ_{A^*}},~~\ast=\succ_{A^*}+\prec_{A^*}$ and $\Delta=\Delta_{\prec}+\Delta_{\succ}$.
\end{thm}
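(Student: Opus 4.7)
My plan is to establish the chain $(b)\Longleftrightarrow(c)$ and $(a)\Longleftrightarrow(b)$ by translating each compatibility condition between its operator form on $A$ (or $A^{*}$) and the corresponding scalar identity via the standard pairing $\langle f^{*}(z)\xi,w\rangle=\langle \xi,f(z)w\rangle$. Throughout I identify $\Delta=\Delta_\succ+\Delta_\prec$ with the dual of $\ast=\succ_{A^{*}}+\prec_{A^{*}}$, and $\delta=\delta_\succ+\delta_\prec$ with the dual of $\cdot=\succ_A+\prec_A$. Under this identification the coalgebra axioms \eqref{Ca1}--\eqref{Ca2} on $(A,\Delta_\succ,\Delta_\prec)$ dualize precisely to the anti-dendriform algebra axioms \eqref{A1}--\eqref{A2} on $(A^{*},\succ_{A^{*}},\prec_{A^{*}})$, so the whole setup is $(A,A^{*})$-symmetric.

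For $(b)\Longleftrightarrow(c)$: Proposition~\ref{zr}(d), applied to the regular representations of $(A,\succ_A,\prec_A)$ and $(A^{*},\succ_{A^{*}},\prec_{A^{*}})$, already guarantees that $(A^{*},-R_{\prec_A}^{*},-L_{\succ_A}^{*})$ is a bimodule of $(A,\cdot)$ and $(A,-R_{\prec_{A^{*}}}^{*},-L_{\succ_{A^{*}}}^{*})$ is a bimodule of $(A^{*},\ast)$, so the bimodule requirements of the matched pair are automatic. It remains to unpack the six tensor compatibilities \eqref{AM1}--\eqref{AM6} with these specific dual actions. Evaluating each side on $x\otimes a$ (or $a\otimes x$) for $x\in A,\ a\in A^{*}$, and moving $R^{*},L^{*}$ across the pairing, the three axioms involving multiplication in $A$ translate, after renaming, to exactly \eqref{D1}--\eqref{D3}; the symmetric three axioms involving multiplication in $A^{*}$ translate to \eqref{D7}--\eqref{D9}.

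For $(a)\Longleftrightarrow(b)$: Under (b), I already have \eqref{D1}--\eqref{D3} on $A$, so the task is to show, modulo the coalgebra axioms and the anti-dendriform structures, that \eqref{D4}--\eqref{D6} on $A$ are equivalent to \eqref{D7}--\eqref{D9} on $A^{*}$. Pairing \eqref{D4} against $a\otimes b\in A^{*}\otimes A^{*}$ and expanding the right-hand side through $R_\prec^{*},L_\prec^{*}$ produces a scalar identity which, once rewritten with $\delta_\succ,\delta_\prec$ as the duals of $\succ_A,\prec_A$, is exactly \eqref{D7} read with the roles of $A$ and $A^{*}$ interchanged. The same pairing argument converts \eqref{D5} into \eqref{D8} and \eqref{D6} into \eqref{D9}. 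Combined with the already-established $(b)\Longleftrightarrow(c)$ direction this gives (a), and conversely every step is reversible.

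\textbf{Main obstacle.} The proof is essentially a dualization bookkeeping exercise, but the delicate point is the ``cross'' compatibilities \eqref{D3} and \eqref{D6}, which carry the twist $\tau$ together with both comultiplications $\Delta_\succ,\Delta_\prec$ and the sums $L_\cdot,R_\cdot$. Verifying that these match the matched pair axioms \eqref{AM5}--\eqref{AM6}, or equivalently that they dualize cleanly into each other, will require careful tracking of which summand of $\cdot=\succ+\prec$ or $\ast=\succ_{A^{*}}+\prec_{A^{*}}$ enters each term, and the $\tau$-twist must be handled by swapping the two tensor slots of the dual pairing. Once this is accounted for, the remaining identifications are straightforward applications of Proposition~\ref{zr} and the defining properties of $\Delta,\delta$.
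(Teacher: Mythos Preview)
Your proposal is correct and follows essentially the same approach as the paper: both reduce everything to pairing computations that move $R^*$, $L^*$ across $\langle\,\cdot\,,\,\cdot\,\rangle$ and rewrite products on one side as comultiplications on the other. The only difference is organizational: the paper proves $(a)\Leftrightarrow(b)$ and $(a)\Leftrightarrow(c)$ (matching \eqref{AM1}--\eqref{AM6} directly with \eqref{D1}--\eqref{D6}), whereas you prove $(a)\Leftrightarrow(b)$ and $(b)\Leftrightarrow(c)$ (matching \eqref{AM1}--\eqref{AM6} with \eqref{D1}--\eqref{D3} and \eqref{D7}--\eqref{D9}); since both routes rest on the same dualization of \eqref{D4}--\eqref{D6} into \eqref{D7}--\eqref{D9}, the content is identical. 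One small caution: your description of the six matched-pair axioms as ``three involving multiplication in $A$'' and ``three involving multiplication in $A^{*}$'' is slightly loose, since \eqref{AM5}--\eqref{AM6} contain no explicit product in either algebra; the correct three--three split comes from the $(A_1,A_2)$-swap symmetry of the matched-pair axioms, and it is this symmetry that pairs them with the $\tau$-twisted conditions \eqref{D3} and \eqref{D9}.
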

\begin{proof} $(a)\Longleftrightarrow (b)$ 
For all $x,y\in A$ and $a,b\in A^{*}$,
\begin{align*}&\langle(R_{\ast}(b)\otimes I)\delta_{\prec}(a)-(I\otimes L_{\succ_{A^*}} (a))\delta_{\prec}(b)-\delta_{\prec}(a\ast b),x\otimes y\rangle
\\=&\langle a,\delta_{\prec}^{*}(R_{\ast}^{*}(b)\otimes I)(x\otimes y)\rangle-
\langle b,\delta_{\prec}^{*}(I\otimes L_{\succ_{A^*}}^{*} (a))(x\otimes y)\rangle-\langle a\ast b,\delta_{\prec}^{*}(x\otimes y)\rangle
\\=&\langle a,(R_{\ast}^{*}(b)x)\prec_{A} y\rangle-\langle b,x\prec_A (L_{\succ_{A^*}}^{*}(a) (y))\rangle-\langle a\ast b,x\prec_A y\rangle
\\=&\langle a,R_{\prec_A} (y)(R_{\ast}^{*}(b)x)\rangle-\langle b,L_{\prec_A}(x) (L_{\succ_{A^*}}^{*}(a)y)\rangle-\langle a\otimes b,\Delta(x\prec_A y)\rangle
\\=&\langle R_{\ast}(b)(R_{\prec_A}^{*} (y)a),x\rangle-\langle L_{\succ_{A^*}}(a)(L_{\prec}^{*}(x)b),  y\rangle
-\langle a\otimes b,\Delta(x\prec_A y)\rangle
\\=&\langle (R_{\prec_A}^{*} (y)a)\ast b,x\rangle-\langle a\succ_{A^{*}}(L_{\prec_A}^{*}(x)b),  y\rangle
-\langle a\otimes b,\Delta(x\prec_A y)\rangle
\\=&\langle (R_{\prec_A}^{*} (y)a)\otimes b,\Delta(x)\rangle
-\langle a\otimes(L_{\prec_A}^{*}(x)b),  \Delta_{\succ}(y)\rangle-\langle a\otimes b,\Delta(x\prec_A y)\rangle
\\=&\langle a\otimes b,(R_{\prec_A} (y)\otimes I)\Delta(x)
-(I\otimes L_{\prec_A}(x))\Delta_{\succ}(y)-\Delta(x\prec_A y)\rangle,
\end{align*}
which means that Eq.\eqref{D7} holds if and only if Eq.\eqref{D4} holds. Take the same procedure, we can verify that
Eq.\eqref{D8} holds if and only if Eq.\eqref{D5} holds, Eq.\eqref{D9} holds if and only if Eq.\eqref{D6} holds.

$(a)\Longleftrightarrow (c)$ We need to prove that Eqs.\eqref{AM1}-\eqref{AM6} are equivalent to
 Eqs.\eqref{D1}-\eqref{D6}. In fact, let $l_{1}=-R_{\prec_A}^{*},r_{2}=-L_{\succ_{A^*}}^{*}$,
 for all $x,y\in A$ and $a,b\in A^{*}$, we obtain
\begin{align*}&\langle R_{\prec_A}^{*}(L_{\succ_{A^*}}^{*}(a)x)b-(R_{\prec_A}^{*}(x)a)\ast b+R_{\prec_A}^{*}(x)(a\ast b),y\rangle
\\=&\langle b,y\prec_A (L_{\succ_{A^*}}^{*}(a)x)\rangle-
\langle R_{\ast}(b)(R_{\prec_A}^{*}(x)a),y\rangle+
\langle a\ast b,R_{\prec_A}(x)y\rangle
\\=&\langle b,L_{\prec_A}(y) (L_{\succ_{A^*}}^{*}(a)x)\rangle-
\langle a,R_{\prec_A}(x)(R_{\ast}^{*}(b)y)\rangle+
\langle a\otimes b,\Delta(z\prec_Ax)\rangle
\\=&\langle L_{\succ_{A^*}}(a)L_{\prec_A}^{*}(y)b, x\rangle-
\langle R_{\ast}(b)R_{\prec_A}^{*}(x)a,y\rangle+
\langle a\otimes b,\Delta(y\prec_Ax)\rangle
\\=&\langle a\succ_{A^*}L_{\prec_A}^{*}(y)b, x\rangle-
\langle (R_{\prec_A}^{*}(x)a)\ast b,y\rangle+
\langle a\otimes b,\Delta(y\prec_Ax)\rangle
\\=&\langle a\otimes L_{\prec_A}^{*}(y)b, \Delta_{\succ}(x)\rangle-
\langle (R_{\prec_A}^{*}(x)a)\otimes b,\Delta(y)\rangle+
\langle a\otimes b,\Delta(y\prec_Ax)\rangle
\\=&\langle a\otimes b, (I\otimes L_{\prec_A}(y))\Delta_{\succ}(x)-(R_{\prec_A}(x)\otimes I)\Delta(y)+\Delta(y\prec_Ax)\rangle,\end{align*}
which implies that Eq.\eqref{AM1} holds if and only if Eq.\eqref{D4} holds. The proof of other cases is similar.
\end{proof}
\subsection{Coboundary anti-dendriform D-bialgebra}
\begin{defi} 
An anti-dendriform D-bialgebra $(A,\succ,\prec,\Delta_{\succ},\Delta_{\prec})$ is called coboundary 
 if $\Delta_{\succ},\Delta_{\prec}$ are defined by the following equations respectively:
 \begin{align}&\label{CD1}
\Delta_{\succ}(x)=-(R_{\prec}(x)\otimes I+I\otimes L_{\cdot}(x))r_{\succ},
\\&\label{CD2}\Delta_{\prec}(x)=(R_{\cdot}(x)\otimes I+I\otimes L_{\succ}(x))r_{\prec}
\end{align}
 for all $x\in A$, where $r_{\prec},r_{\succ}\in A\otimes A$.
\end{defi}
It is straightforward to show that Eqs.\eqref{D1}-\eqref{D2} hold if $\Delta_{\succ},\Delta_{\prec}$ 
are given by Eqs.\eqref{CD1}-\eqref{CD2}.
 
 \begin{pro} \label{DB3}
 Let $(A,\succ,\prec)$ be an anti-dendriform algebra and $r_{\prec},r_{\succ}\in A\otimes A$. Define
 $\Delta_{\succ},\Delta_{\prec}$ by Eqs.\eqref{CD1}-\eqref{CD2}. Then
 \begin{enumerate}
		\item Eq.\eqref{D3} holds if and only if the following equation holds:
\begin{equation}\label{CD3}(R_{\prec}(x)\otimes I+I\otimes L_{\cdot}(x))(L_{\succ}(y)\otimes I+I\otimes R_{\cdot}(y))(r_{\succ}+\tau r_{\prec})=0.\end{equation}
\item Eq.\eqref{D4} holds if and only if the following equation holds:
\begin{equation}\label{CD4}[I\otimes L_{\succ}( x\prec y)-R_{\prec}( y)\otimes L_{\succ}( x)+R_{\prec}(x\prec y+x\cdot y)\otimes I](r_{\succ}-r_{\prec})=0,\end{equation}
\item Eq.\eqref{D5} holds if and only if the following equation holds:
\begin{align}\label{CD5}
[I\otimes L_{\succ}( x\succ y+x\cdot y)+R_{\prec}( x\succ y)\otimes I-R_{\prec}(y)\otimes L_{\succ}( x)
(r_{\succ}-r_{\prec})=0
\end{align}
\item Eq.\eqref{D6} holds if and only if the following equation holds:
\begin{align}\label{CD6}&[L_{\succ}( x)R_{\succ}(y)\otimes I-R_{\succ}(y)\otimes R_{\prec}( x)](r_{\prec}+\tau r_{\succ})\\&+
[I\otimes R_{\prec}(x)L_{\prec}(y)-L_{\succ}(x)\otimes L_{\prec}( y)]( r_{\succ}+\tau r_{\prec}) \nonumber
\\&+[ L_{\succ}( x)R_{\prec}(y)\otimes I-R_{\prec}(y)\otimes R_{\prec}( x)
+L_{\succ}( x)\otimes L_{\succ}( y)-I\otimes R_{\prec}( x)L_{\succ}( y)](r_{\succ}-r_{\prec})
=0.\nonumber\end{align}
\end{enumerate}
 \end{pro}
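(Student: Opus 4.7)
The plan is to establish each of the four equivalences by a direct calculation: substitute the coboundary formulas \eqref{CD1}--\eqref{CD2} into the left-hand side of the corresponding compatibility condition \eqref{D3}--\eqref{D6}, and then simplify using the representation identities \eqref{R1}--\eqref{R7} applied to the regular representation $(L_\succ,R_\succ,L_\prec,R_\prec)$, together with the anti-dendriform axioms \eqref{A1}--\eqref{A2}. A basic tool used throughout is the identity $\tau(P\otimes Q)r = (Q\otimes P)(\tau r)$ for any linear operators $P,Q$ on $A$ and any $r\in A\otimes A$, which allows us to move $\tau$ past operator tensors at the cost of swapping the factors.

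For part (1), inserting \eqref{CD1} into the first two terms of \eqref{D3} and \eqref{CD2} into the last two, then pulling $\tau$ through with the identity above, the left-hand side becomes
$$-\bigl(L_\succ(y)\otimes I+I\otimes R_\cdot(y)\bigr)\bigl(R_\prec(x)\otimes I+I\otimes L_\cdot(x)\bigr)r_\succ-\bigl(R_\prec(x)\otimes I+I\otimes L_\cdot(x)\bigr)\bigl(L_\succ(y)\otimes I+I\otimes R_\cdot(y)\bigr)\tau r_\prec.$$
Expanding both products, three of the four tensor entries agree trivially, while the diagonal entries require $L_\succ(y)R_\prec(x)=R_\prec(x)L_\succ(y)$ and $R_\cdot(y)L_\cdot(x)=L_\cdot(x)R_\cdot(y)$; these are exactly \eqref{R6} (with $x,y$ swapped) and \eqref{R7}. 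Consequently the two operators commute, the two summands factor through the common operator, and the equation reduces to \eqref{CD3}.

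For parts (2) and (3) the strategy is the same: substitute \eqref{CD1}--\eqref{CD2} into \eqref{D4} and \eqref{D5}, collect the coefficients of $r_\succ$ and of $r_\prec$ separately, and simplify using the identities $R_\prec(y)R_\prec(x)=R_\succ(x\succ y)=-R_\prec(x\cdot y)$, $L_\succ(x)L_\cdot(y)=-L_\succ(x\succ y)$, $R_\prec(y)R_\succ(x)=R_\succ(x\prec y)$ and their duals from \eqref{R1}--\eqref{R5}. After cancellation, the surviving terms reorganize into a single operator acting on $r_\succ-r_\prec$, giving exactly \eqref{CD4} and \eqref{CD5}.

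Part (4) follows the same template but is genuinely longer. Applying the substitution and the $\tau$-identity to \eqref{D6} produces an expression in which four distinct symbols $r_\succ$, $r_\prec$, $\tau r_\succ$, $\tau r_\prec$ appear; the key organizational step is to split the resulting sum into three blocks, namely the coefficient of $r_\prec+\tau r_\succ$, of $r_\succ+\tau r_\prec$, and of $r_\succ-r_\prec$. Each block is then massaged using \eqref{R1}--\eqref{R7} (rewriting compositions such as $L_\succ R_\succ$, $R_\prec L_\prec$, $L_\succ R_\prec$ in terms of the product in $(A,\cdot)$) until it matches one of the three bracketed factors in \eqref{CD6}. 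The main obstacle is not conceptual but bookkeeping: one must track signs, the actions of $\tau$ on both sides of the tensor, and the interchange of composition order for many terms simultaneously without miscollecting coefficients. Once the three blocks are correctly identified, they precisely reproduce the three summands of \eqref{CD6}, completing the proof.
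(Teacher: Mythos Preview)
Your proposal is correct and follows essentially the same route as the paper: direct substitution of \eqref{CD1}--\eqref{CD2} into \eqref{D3}--\eqref{D6}, then simplification via the regular-representation identities \eqref{R1}--\eqref{R7} and the rule $\tau(P\otimes Q)=(Q\otimes P)\tau$. Your treatment of part~(1) is in fact slightly cleaner than the paper's, since you observe up front that $(L_\succ(y)\otimes I+I\otimes R_\cdot(y))$ and $(R_\prec(x)\otimes I+I\otimes L_\cdot(x))$ commute by \eqref{R6}--\eqref{R7} and then factor, whereas the paper expands both products fully before recombining; for parts~(2)--(4) the paper carries out (2) in detail and waves at the rest with ``similarly'', so your outline of the bookkeeping in (4) is already more explicit than what the paper provides.
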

 \begin{proof} (a)
By Eqs.\eqref{R6}, \eqref{R7} and Eqs.\eqref{CD1}-\eqref{CD2}, we obtain
\begin{align*}
&(I\otimes R_{\cdot}(y))\Delta_{\succ}(x)+(L_{\succ_{A}} (y)\otimes I)\Delta_{\succ}(x)-\tau(I\otimes R_{\prec_A} (x))\Delta_{\prec}(y)-\tau
(L_{\cdot}(x)\otimes I)\Delta_{\prec}(y)\\
=&-(I\otimes R_{\cdot}(y))(R_{\prec}(x)\otimes I+I\otimes L_{\cdot}(x))r_{\succ}
-(L_{\succ_{A}} (y)\otimes I)(R_{\prec}(x)\otimes I+I\otimes L_{\cdot}(x))r_{\succ}
\\&-\tau(I\otimes R_{\prec_A} (x))(R(y)\otimes I+I\otimes L_{\succ}(y))r_{\prec}
-\tau(L_{\cdot}(x)\otimes I)(R(y)\otimes I+I\otimes L_{\succ}(y))r_{\prec}
\\=&-(R_{\prec}(x)\otimes R_{\cdot}(y)+I\otimes R_{\cdot}(y)L(x))r_{\succ}
-(L_{\succ_{A}} (y)R_{\prec}(x)\otimes I+L_{\succ_{A}} (y)\otimes L_{\cdot}(x))r_{\succ}
\\&-\tau(R_{\cdot}(y)\otimes R_{\prec_A} (x)+I\otimes R_{\prec_A} (x)L_{\succ}(y))r_{\prec}
-\tau(L_{\cdot}(x)R_{\cdot}(y)\otimes I+L_{\cdot}(x)\otimes L_{\succ}(y))r_{\prec}
\\=&-(R_{\prec}(x)\otimes I+I\otimes L_{\cdot}(x))(L_{\succ}(y)\otimes I+I\otimes R_{\cdot}(y))(r_{\succ}+\tau r_{\prec}),\end{align*}
which implies that Eq.\eqref{D3} holds if and only if Eq.\eqref{CD3} holds.

(b) By Eqs.\eqref{R1}, \eqref{R2} and \eqref{R5}, we have
 \begin{align*}&(R_{\prec} (y)\otimes I)\Delta(x)-(I\otimes L_{\prec_A} (x))\Delta_{\succ}(y)-\Delta(x\prec y)\\
=&(R_{\prec} (y)\otimes I)(R_{\cdot}(x)\otimes I+I\otimes L_{\succ}(x))r_{\prec}
-(R_{\prec} (y)\otimes I)(R_{\prec}(x)\otimes I+I\otimes L_{\cdot}(x))r_{\succ}
\\&+(I\otimes L_{\prec_A} (x))(R_{\prec}(y)\otimes I+I\otimes L_{\cdot}(y))r_{\succ}
-(R_{\cdot}(x\prec y)\otimes I\\&+I\otimes L_{\succ}(x\prec y))r_{\prec}
+(R_{\prec}(x\prec y)\otimes I+I\otimes L_{\cdot}(x\prec y))r_{\succ}\\=&
[R_{\prec}( y)\otimes L_{\prec}( x)+I\otimes L_{\prec}( x) L_{\cdot}( y)-R_{\prec}( y)R_{\prec}( x)\otimes I
-R_{\prec}( y)\otimes L_{\cdot}( x)+R_{\prec}(x\prec y)\otimes I\\&+I\otimes L_{\cdot}(x\prec y)]r_{\succ}
+[R_{\prec}( y)R_{\cdot}(x)\otimes I+R_{\prec}( y)\otimes L_{\succ}( x)-R_{\cdot}( x\prec y)\otimes I-I\otimes L_{\succ}( x\prec y)]r_{\prec}
\\=&[I\otimes L_{\succ}( x\prec y)-R_{\prec}( y)\otimes L_{\succ}( x)+R_{\prec}(x\prec y+x\cdot y)\otimes I](r_{\succ}-r_{\prec}),
\end{align*}
which yields that Eq.\eqref{D4} holds if and only if Eq.\eqref{CD4} holds.
The other cases can be checked similarly.
 \end{proof}

\begin{pro} \label{DB4}Let $(A,\succ,\prec)$ be an anti-dendriform algebra and 
$r_{\prec}=\sum_{i}a_i\otimes b_i,r_{\succ}=\sum_{i}c_i\otimes d_i\in A\otimes A$. Assume that
$\Delta_{\succ},\Delta_{\prec}$ defined by Eqs.\eqref{CD1}-\eqref{CD2}. Then
\begin{enumerate}
		\item Eq. \eqref{Ca1} holds if and only if the following equation holds:
\begin{align}\label{CD7}&(R_{\prec}(x)\otimes I \otimes I)(r_{\succ,12}\prec r_{\prec,13}+r_{\prec,23}\cdot r_{\succ,12}+r_{\succ,13}\succ r_{\prec,23})
\\&-(I\otimes I\otimes L_{\succ}(x))(r_{\succ,12}\prec r_{\prec,13}+r_{\prec,23}\cdot r_{\succ,12}+r_{\succ,13}\succ r_{\prec,23})=0\nonumber
,\end{align}
\item $(I\otimes \Delta_{\succ})\Delta_{\succ}=-(\Delta\otimes I)\Delta_{\succ}$ holds if and only if the following equation holds:
\begin{align}\label{CD8}&(r_{\succ,12}-r_{\prec,12})\prec((R_{\prec}(x)\otimes I \otimes I))r_{\succ,13}
+(I\otimes R_{\prec}(x)\otimes I)r_{\succ,23}\succ(r_{\succ,12}-r_{\prec,12})
\\&+(I\otimes I\otimes L_{\cdot}(x))(r_{\succ,13}\cdot r_{\succ,23}-r_{\prec,12}\cdot r_{\succ,13}-r_{\succ,23}\succ r_{\prec,12}
+r_{\succ,12}\prec r_{\succ,13}+r_{\succ,23}\cdot r_{\succ,12})
\nonumber\\&
+(R_{\prec}(x)\otimes I \otimes I)(r_{\succ,23}\prec r_{\succ,12}+r_{\succ,13}\cdot r_{\succ,23}-r_{\prec,12}\succ r_{\succ,13})=0\nonumber,\end{align}
\item $(I\otimes \Delta_{\succ})\Delta_{\succ}=-(\Delta\otimes I)\Delta_{\succ}$ holds if and only if the following equation holds:
\begin{align}\label{CD9}&
(R_{\cdot}(x)\otimes I \otimes I)(r_{\prec,12}\cdot r_{\prec,13}-
r_{\succ,23}\prec r_{\prec,12}-r_{\prec,13}\cdot r_{\succ,23}+r_{\prec,23}\cdot r_{\prec,12}+r_{\prec,13}\succ r_{\prec,23})\\&+(I\otimes I\otimes L_{\succ}(x))(r_{\prec,12}\cdot r_{\prec,13}+
r_{\prec,23}\succ r_{\prec,12}-r_{\prec,13}\prec r_{\succ,23})\nonumber
\\&+(I\otimes I\otimes L_{\succ}(x))r_{\prec,13}\succ (r_{\prec,23}- r_{\succ,23})
+(r_{\prec,23}- r_{\succ,23})\prec (I\otimes L_{\succ}(x)\otimes I)r_{\prec,12}
=0\nonumber,\end{align}
\item $(I\otimes \Delta_{\succ})\Delta_{\succ}(x)=( \Delta_{\prec}\otimes I)\Delta_{\prec}(x)$ holds if and only if the following equation holds:
\begin{align}\label{CD10}&
 (R_{\prec}(x)\otimes I \otimes I)(r_{\succ,23}\prec r_{\succ,12}+r_{\succ,13}\cdot r_{\succ,23}-r_{\prec,12}\succ r_{\prec,13})
\\&-(I\otimes I\otimes L_{\succ}(x))(r_{\prec,12}\cdot r_{\prec,13}+r_{\prec,23}\succ r_{\prec,12}-r_{\succ,13}\prec r_{\succ,23})\nonumber
\\&-[(I\otimes R_{\prec}(x)\otimes I)r_{\succ,23}]\prec r_{\succ,12}
+[(I\otimes R_{\prec}(x)\otimes I)r_{\prec,23}]\prec r_{\prec,12}
=0\nonumber.\end{align}
\end{enumerate}
\end{pro}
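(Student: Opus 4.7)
The proof is a direct expansion, entirely parallel in strategy to Proposition \ref{DB3}. Write $r_{\prec}=\sum_{i}a_{i}\otimes b_{i}$ and $r_{\succ}=\sum_{j}c_{j}\otimes d_{j}$, and expand, by \eqref{CD1}--\eqref{CD2},
\[
\Delta_{\succ}(x)=-\sum_{j}\bigl((c_{j}\prec x)\otimes d_{j}+c_{j}\otimes(x\cdot d_{j})\bigr),\qquad
\Delta_{\prec}(x)=\sum_{i}\bigl((a_{i}\cdot x)\otimes b_{i}+a_{i}\otimes(x\succ b_{i})\bigr).
\]
The plan for each of (a)--(d) is to substitute these expressions into the corresponding coalgebra identity of Definition \ref{DB1}, producing an explicit sum of simple three-tensors; then sort the resulting terms according to which tensor leg carries an action of $x$. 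In this way the expansion assembles itself into the operators $R_{\prec}(x)\otimes I\otimes I$, $I\otimes R_{\prec}(x)\otimes I$, $I\otimes I\otimes L_{\succ}(x)$ (and their $R_{\cdot}$, $L_{\cdot}$ variants) applied to leg-notation products such as $r_{\succ,12}\prec r_{\prec,13}$, $r_{\prec,23}\cdot r_{\succ,12}$, etc., which are exactly the ingredients of \eqref{CD7}--\eqref{CD10}.

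For part (a), Eq.~\eqref{Ca1} couples $\Delta_{\succ}$ and $\Delta_{\prec}$ once each, so the expansion is cleanest: the terms on the outer legs collect into $R_{\prec}(x)\otimes I\otimes I$ and $I\otimes I\otimes L_{\succ}(x)$ pulled through the triple products on the middle factor, and no further axiomatic simplification is needed beyond identifying $a_j\cdot a_i$, $c_j\prec a_i$ and $c_j\succ b_i$ in the middle leg. For (b)--(d) the four-term chain of \eqref{Ca2} produces up to sixteen summands per side, and one must simplify mixed compositions such as $(c_{j}\prec x)\succ d_{k}$, $(a_{i}\cdot x)\prec b_{k}$, and $a_{i}\cdot(x\succ b_{k})$. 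The anti-dendriform axioms \eqref{A1}--\eqref{A2} are exactly what allow the rewrites $(y\prec x)\prec z=-y\prec(x\cdot z)=y\succ(x\succ z)=-(y\cdot x)\succ z$ and $(y\succ x)\prec z=y\succ(x\prec z)$, and these are precisely the moves that reduce the two sides of each identity to a common leg-notation expression acted on by the appropriate $x$-dependent operator.

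The principal obstacle is the combinatorial bookkeeping in (b)--(d): keeping straight which of $R_{\prec}(x), R_{\cdot}(x), L_{\succ}(x), L_{\cdot}(x)$ enters each summand, tracking signs coming from the four-term chain of \eqref{Ca2}, and correctly pairing the inner factors $a_{i},b_{i},c_{j},d_{j}$ into the prescribed three-leg products. Once the sign conventions are fixed and \eqref{A1}--\eqref{A2} are applied uniformly on each triple of repeated tensor legs, every left-hand side of \eqref{Ca1}--\eqref{Ca2} becomes the image under the indicated operator of the bracketed expression in \eqref{CD7}--\eqref{CD10}; since those operators depend freely on $x\in A$ while the bracketed expressions do not, the identity is equivalent to the vanishing of the bracketed expression, which is the content of the proposition. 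The verification of (d) then follows from (b) and (c) by matching the common summand $(I\otimes\Delta_{\succ})\Delta_{\succ}$ in the chain \eqref{Ca2}.
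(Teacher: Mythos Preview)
Your approach---direct expansion of the defining formulas \eqref{CD1}--\eqref{CD2} and reorganisation via \eqref{A1}--\eqref{A2}---is exactly what the paper does, and the sketch for part (a) is essentially the paper's computation. However, the final paragraph contains a genuine misreading of the statement. The proposition does \emph{not} assert that the inner bracketed expressions (such as $r_{\succ,12}\prec r_{\prec,13}+r_{\prec,23}\cdot r_{\succ,12}+r_{\succ,13}\succ r_{\prec,23}$) vanish; it only asserts that each coalgebra identity is \emph{equal as an element of $A^{\otimes 3}$} to the corresponding displayed expression \eqref{CD7}--\eqref{CD10}, which still carries the $x$-dependent operators. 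The paper simply shows the two sides are identically equal, so the ``if and only if'' is tautological once that identity is established. Your appeal to ``operators depend freely on $x$'' is both unnecessary and unjustified: there is no reason the family $\{R_{\prec}(x)\otimes I\otimes I - I\otimes I\otimes L_{\succ}(x)\}_{x\in A}$ should be separating, and the proposition makes no such claim.

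Two further points. First, \eqref{CD8}--\eqref{CD10} are not of the form ``single operator applied to a single bracketed expression''; they are sums of several such terms with different operators on different legs (including cross terms like $(r_{\succ,12}-r_{\prec,12})\prec((R_{\prec}(x)\otimes I\otimes I)r_{\succ,13})$), so your description of (b)--(d) is oversimplified. Second, part (d) does not follow formally from (b) and (c) as you claim: (b) and (c) each compute a \emph{difference} of two coalgebra expressions, not the individual summands, so you cannot simply subtract to recover (d). The paper computes (d) directly, establishing the auxiliary identities \eqref{YE1}--\eqref{YE2} from \eqref{A1} along the way to simplify the cross terms; you should do the same.
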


\begin{proof} In view of Eqs.\eqref{A1}, \eqref{CD1}-\eqref{CD2}, we get
\begin{align*}
&( \Delta_{\succ}\otimes I)\Delta_{\prec}-(I\otimes\Delta_{\prec})\Delta_{\succ}
\\=&( \Delta_{\succ}\otimes I)(R_{\cdot}(x)\otimes I+I\otimes L_{\succ}(x))\sum_{i}a_i\otimes b_i+
(I\otimes\Delta_{\prec})(R_{\prec}(x)\otimes I+I\otimes L_{\cdot}(x))\sum_{i}c_i\otimes d_i
\\=&( \Delta_{\succ}\otimes I)\sum_{i}[(a_i\cdot x)\otimes b_i+a_i\otimes(x\succ b_i)]+
(I\otimes\Delta_{\prec})\sum_{i}[(c_i\prec x)\otimes d_i+c_i\otimes(x\cdot d_i)]
\\=&\sum_{i}\Delta_{\succ}(a_i\cdot x)\otimes b_i+\sum_{i}\Delta_{\succ}(a_i)\otimes (x\succ b_i)
+\sum_{i} ( c_i\prec x)\otimes \Delta_{\prec}(d_i)+\sum_{i}c_i\otimes \Delta_{\prec}(x\cdot d_i)
\\=&-\sum_{i,j}(c_j\prec(a_i\cdot x))\otimes d_j\otimes b_i-
\sum_{i,j}c_j\otimes[(a_i\cdot x)\cdot d_j]\otimes b_i
-\sum_{i,j}[(c_j\prec a_i)\otimes d_j\\&+c_j\otimes (a_i \cdot d_j)]\otimes(x\succ b_i)
+\sum_{i,j}(c_j\prec x)\otimes [ (a_i\cdot d_j) \otimes b_i+a_i\otimes (d_j\succ b_i)]
\\&+\sum_{i,j}c_j\otimes [(a_i\cdot(x\cdot d_j))\otimes b_i+a_i\otimes((x\cdot d_j)\succ b_i)]
\\=&(R_{\prec}(x)\otimes I \otimes I)(r_{\succ,12}\prec r_{\prec,13})
-(I\otimes I\otimes L_{\succ}(x))(r_{\succ,12}\prec r_{\prec,13})
-(I\otimes I\otimes L_{\succ}(x))(r_{\prec,23}\cdot r_{\succ,12})
\\&+(R_{\prec}(x)\otimes I \otimes I)(r_{\prec,23}\cdot r_{\succ,12})
+(R_{\prec}(x)\otimes I \otimes I)(r_{\succ,13}\succ r_{\prec,23})
-(I\otimes I\otimes L_{\succ}(x))(r_{\succ,13}\succ r_{\prec,23})
\\=&(R_{\prec}(x)\otimes I \otimes I)(r_{\succ,12}\prec r_{\prec,13}+r_{\prec,23}\cdot r_{\succ,12}+r_{\succ,13}\succ r_{\prec,23})
\\&-(I\otimes I\otimes L_{\succ}(x))(r_{\succ,12}\prec r_{\prec,13}+r_{\prec,23}\cdot r_{\succ,12}+r_{\succ,13}\succ r_{\prec,23})
,\end{align*}
which indicates that Eq.\eqref{Ca1} holds if and only if Eq.\eqref{CD7} holds.
Thanks to Eq.\eqref{A1}, we get
\begin{align}&\label{YE1}(I\otimes I\otimes L_{\prec}(x))(r_{\succ,13}\cdot r_{\succ,23})
+(I\otimes I\otimes L_{\succ}(x))(r_{\succ,13}\succ r_{\succ,23})=0,
\\&\label{YE2}(R_{\succ}(x)\otimes I \otimes I)(r_{\prec,12}\cdot r_{\prec,13})
+(R_{\prec}(x)\otimes I \otimes I)(r_{\prec,12}\prec r_{\prec,13})=0.\end{align}
Using Eqs.\eqref{CD1}-\eqref{CD2} and \eqref{YE1}-\eqref{YE2}, we obtain
\begin{align*}
&(I\otimes \Delta_{\succ})\Delta_{\succ}(x)-( \Delta_{\prec}\otimes I)\Delta_{\prec}(x)
\\=&\sum_{i,j}(c_i\prec x)\otimes (c_j\prec d_i)\otimes d_j+(c_i\prec x)\otimes c_j\otimes( d_i\cdot d_j)
+c_i\otimes( c_j\prec(x\cdot d_i))\otimes d_j
\\&+c_i\otimes c_j\otimes(x\cdot d_i))\cdot d_j
-(a_j\cdot(a_i\cdot x))\otimes b_j\otimes b_i-a_j\otimes ((a_i\cdot x)\succ b_j)\otimes b_i
\\&-(a_j\cdot a_i\otimes b_j\otimes(x\succ b_i)-a_j\otimes (a_i\succ b_j)\otimes (x\succ b_i)
\\=&(R_{\prec}(x)\otimes I \otimes I)(r_{\succ,23}\prec r_{\succ,12})
+(R_{\prec}(x)\otimes I \otimes I)(r_{\succ,13}\cdot r_{\succ,23})
\\&-[(I\otimes R_{\prec}(x)\otimes I)r_{\succ,23}]\prec r_{\succ,12}
+(I\otimes I\otimes L_{\cdot}(x))(r_{\succ,13}\cdot r_{\succ,23})
-(r_{\succ,13}\cdot r_{\succ,23})
\\&-(R_{\cdot}(x)\otimes I \otimes I)(r_{\prec,12}\cdot r_{\prec,13})
+[(I\otimes R_{\prec}(x)\otimes I)r_{\prec,23}]\prec r_{\prec,12}
\\&-(I\otimes I\otimes L_{\succ}(x))(r_{\prec,12}\cdot r_{\prec,13})
-(I\otimes I\otimes L_{\succ}(x))(r_{\prec,23}\succ r_{\prec,12})
\\=& (R_{\prec}(x)\otimes I \otimes I)(r_{\succ,23}\prec r_{\succ,12}+r_{\succ,13}\cdot r_{\succ,23}-r_{\prec,12}\succ r_{\prec,13})
\\&-(I\otimes I\otimes L_{\succ}(x))(r_{\prec,12}\cdot r_{\prec,13}+r_{\prec,23}\succ r_{\prec,12}-r_{\succ,13}\prec r_{\succ,23})
\\&-[(I\otimes R_{\prec}(x)\otimes I)r_{\succ,23}]\prec r_{\succ,12}
+[(I\otimes R_{\prec}(x)\otimes I)r_{\prec,23}]\prec r_{\prec,12}
\\&+(I\otimes I\otimes L_{\prec}(x))(r_{\succ,13}\cdot r_{\succ,23})
+(I\otimes I\otimes L_{\succ}(x))(r_{\succ,13}\succ r_{\succ,23})
\\&-(R_{\succ}(x)\otimes I \otimes I)(r_{\prec,12}\cdot r_{\prec,13})
-(R_{\prec}(x)\otimes I \otimes I)(r_{\prec,12}\prec r_{\prec,13})
\\=& (R_{\prec}(x)\otimes I \otimes I)(r_{\succ,23}\prec r_{\succ,12}+r_{\succ,13}\cdot r_{\succ,23}-r_{\prec,12}\succ r_{\prec,13})
\\&-(I\otimes I\otimes L_{\succ}(x))(r_{\prec,12}\cdot r_{\prec,13}+r_{\prec,23}\succ r_{\prec,12}-r_{\succ,13}\prec r_{\succ,23})
\\&-[(I\otimes R_{\prec}(x)\otimes I)r_{\succ,23}]\prec r_{\succ,12}
+[(I\otimes R_{\prec}(x)\otimes I)r_{\prec,23}]\prec r_{\prec,12}
,\end{align*}
which yields that the item (d) holds.
The remaining part can be verified analogously.
\end{proof}

\begin{thm} \label{YE3} Let $(A,\succ,\prec)$ be an anti-dendriform algebra and 
$r_{\prec}=\sum_{i}a_i\otimes b_i,~~r_{\succ}=\sum_{i}c_i\otimes d_i\in A\otimes A$. Assume that
$\Delta_{\succ},\Delta_{\prec}$ defined by Eqs.\eqref{CD1}-\eqref{CD2}.
Then $(A,\succ,\prec,\Delta_{\succ},\Delta_{\prec})$ is an anti-dendriform D-bialgebra if and only if Eqs.\eqref{CD3}-\eqref{CD10} hold.\end{thm}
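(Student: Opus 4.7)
The plan is to reduce Theorem \ref{YE3} directly to the conjunction of Propositions \ref{DB3} and \ref{DB4}, together with a short verification that Eqs.~\eqref{D1}--\eqref{D2} are automatic under the coboundary ansatz \eqref{CD1}--\eqref{CD2}. By Definitions \ref{DB1} and \ref{DB2}, the quintuple $(A,\succ,\prec,\Delta_{\succ},\Delta_{\prec})$ is an anti-dendriform D-bialgebra if and only if three packages of axioms hold: the anti-dendriform algebra axioms for $(A,\succ,\prec)$ (already assumed), the anti-dendriform coalgebra axioms \eqref{Ca1}--\eqref{Ca2}, and the six compatibilities \eqref{D1}--\eqref{D6}. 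So the proof is a checklist in which each axiom is translated into one of \eqref{CD3}--\eqref{CD10}.

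First I would dispense with \eqref{D1} and \eqref{D2}. Substituting the coboundary formulas and expanding gives, for \eqref{D1}, the identity $(R_{\cdot}(x\cdot y)-R_{\cdot}(y)R_{\cdot}(x))\otimes I$ acting on $r_{\prec}$ together with $I\otimes(L_{\succ}(x\cdot y)+L_{\succ}(x)L_{\succ}(y))$; the first vanishes by associativity of $(A,\cdot)$ and the second vanishes by the identity $L_{\succ}(x)L_{\succ}(y)=-L_{\succ}(x\cdot y)$ contained in Eq.~\eqref{A1} (i.e.\ the regular-representation version of \eqref{R1}). The verification for \eqref{D2} is symmetric, using $L_{\cdot}(x\cdot y)=L_{\cdot}(x)L_{\cdot}(y)$ and $R_{\prec}(y)R_{\prec}(x)=-R_{\prec}(x\cdot y)$ from \eqref{R2}. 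So \eqref{D1}--\eqref{D2} impose no conditions on $r_{\succ},r_{\prec}$.

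Next I would invoke Proposition \ref{DB3} to replace each of the remaining compatibility conditions by its tensor-equation counterpart: \eqref{D3} $\Leftrightarrow$ \eqref{CD3}, \eqref{D4} $\Leftrightarrow$ \eqref{CD4}, \eqref{D5} $\Leftrightarrow$ \eqref{CD5}, \eqref{D6} $\Leftrightarrow$ \eqref{CD6}. Then I would invoke Proposition \ref{DB4} to replace the coalgebra axioms: Eq.~\eqref{Ca1} $\Leftrightarrow$ \eqref{CD7}, and the three independent equalities packaged in \eqref{Ca2}, namely $(I\otimes \Delta_{\succ})\Delta_{\succ}=-(\Delta\otimes I)\Delta_{\succ}$, $-(\Delta\otimes I)\Delta_{\succ}=(\Delta_{\prec}\otimes I)\Delta_{\prec}$ (equivalently $(\Delta_{\prec}\otimes I)\Delta_{\prec}=-(I\otimes \Delta)\Delta_{\prec}$), and $(I\otimes \Delta_{\succ})\Delta_{\succ}=(\Delta_{\prec}\otimes I)\Delta_{\prec}$, become \eqref{CD8}, \eqref{CD9} and \eqref{CD10} respectively. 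Combining these equivalences gives exactly the stated biconditional.

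The only substantive point is to ensure that \eqref{Ca2}, which is a chain of three equalities among four expressions, is equivalent to the specific triple of tensor identities picked out in Proposition \ref{DB4}; any such chain can be encoded by three independent equalities, and Proposition \ref{DB4}(b)--(d) furnishes one such choice, so no new input is needed. I do not expect any real obstacle here: the theorem is essentially a bookkeeping statement that assembles the preceding propositions, and the only ingredient beyond them is the automatic vanishing argument for \eqref{D1}--\eqref{D2} indicated above.
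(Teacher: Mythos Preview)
Your proposal is correct and follows essentially the same route as the paper, whose proof simply cites Definitions \ref{DB1}--\ref{DB2} and Propositions \ref{DB3}--\ref{DB4}, together with the remark (stated just before Proposition \ref{DB3}) that Eqs.~\eqref{D1}--\eqref{D2} hold automatically under the coboundary ansatz. Your explicit verification of that automatic vanishing and your observation about the three independent equalities encoding the chain \eqref{Ca2} are exactly the bookkeeping the paper leaves implicit.
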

\begin{proof} This follows from Definition \ref{DB1}, Definition \ref{DB2},Proposition \ref{DB3} and Proposition \ref{DB4}.\end{proof}

Eqs.\eqref{CD3}-\eqref{CD10} are very complicated. We study some simple and special case:
$r_{\prec}=r_{\succ}=r\in A\otimes A$ and $r$ is skew-symmetric.

The following conclusion is apparently.

\begin{pro} \label{YE4} Assume that $r_{\prec}=r_{\succ}=r\in A\otimes A$ and $r$ is skew symmetric. Then
\begin{enumerate}
		\item Eqs.\eqref{CD3}-\eqref{CD6} hold automatically.
\item Eq.\eqref{CD7} holds if and only if
\begin{align*}(R_{\prec}(x)\otimes I \otimes I-I\otimes I\otimes L_{\succ}(x))(r_{12}\prec r_{13}+r_{23}\cdot r_{12}+r_{13}\succ r_{23})
=0,\end{align*}
\item Eq.\eqref{CD8} holds if and only if
\begin{align*}&
(R_{\prec}(x)\otimes I \otimes I+I\otimes I\otimes L_{\cdot}(x))(r_{23}\prec r_{12}+r_{13}\cdot r_{23}-r_{12}\succ r_{13})=0,\end{align*}
\item Eq.\eqref{CD9} holds if and only if
\begin{align*}
(R_{\cdot}(x)\otimes I \otimes I+I\otimes I\otimes L_{\succ}(x))(r_{12}\cdot r_{13}+
r_{23}\succ r_{12}-r_{13}\prec r_{23})
=0,\end{align*}
\item Eq.\eqref{CD10} holds if and only if
\begin{align*}
 (R_{\prec}(x)\otimes I \otimes I)(r_{23}\prec r_{12}+r_{13}\cdot r_{23}-r_{12}\succ r_{13})
=(I\otimes I\otimes L_{\succ}(x))(r_{12}\cdot r_{13}+r_{23}\succ r_{12}-r_{13}\prec r_{23}).\end{align*}
\end{enumerate}
\end{pro}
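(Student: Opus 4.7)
The statement is essentially a bookkeeping exercise once one substitutes $r_{\prec}=r_{\succ}=r$ and uses the skew-symmetry $\tau r=-r$ in the identities (\ref{CD3})--(\ref{CD10}) of Theorem \ref{YE3}. So my plan is to verify each of the five assertions in turn by direct substitution, grouping terms using $\cdot=\succ+\prec$.

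For part (a), I observe that $(\ref{CD3})$ only involves the combination $r_{\succ}+\tau r_{\prec}$, which becomes $r+\tau r=0$ under the hypotheses, and that $(\ref{CD4})$, $(\ref{CD5})$, $(\ref{CD6})$ only involve the combinations $r_{\succ}-r_{\prec}$ and $r_{\prec}+\tau r_{\succ}=r-r=0$ (noting that $(\ref{CD6})$, although written with three bracketed pieces, reduces similarly since each piece is applied to one of these vanishing combinations). Thus all four equations hold automatically with no assumption on $r$ beyond skew-symmetry.

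For parts (b)--(e), the strategy is uniform: after setting $r_{\succ}=r_{\prec}=r$, factor the resulting expression through the relations $r_{ij}\cdot r_{kl}=r_{ij}\succ r_{kl}+r_{ij}\prec r_{kl}$ so that one can collect tensor components into the single combination $C(r):=r_{12}\cdot r_{13}+r_{23}\cdot r_{12}+r_{13}\succ r_{23}$ appearing in (b), or equivalently $r_{23}\prec r_{12}+r_{13}\cdot r_{23}-r_{12}\succ r_{13}$ for (c), and $r_{12}\cdot r_{13}+r_{23}\succ r_{12}-r_{13}\prec r_{23}$ for (d). For (e), I simply note that the last two terms of $(\ref{CD10})$ cancel and the remaining two terms rearrange into the claimed equality of the operators $(R_{\prec}(x)\otimes I\otimes I)$ and $(I\otimes I\otimes L_{\succ}(x))$ applied to the respective three-term expressions from (c) and (d).

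The routine identities I will use repeatedly are $-a\cdot b+a\prec b=-a\succ b$, $a\cdot b-a\prec b=a\succ b$, and their $\succ/\prec$ counterparts, together with the fact that for $r_{\succ}=r_{\prec}=r$ the $r_{\succ,ij}-r_{\prec,ij}$ differences all vanish. There is no real obstacle here: once one writes out the left-hand side of each of $(\ref{CD7})$--$(\ref{CD10})$ with $r_{\succ}=r_{\prec}=r$ and combines the $R_{\prec}(x)\otimes I\otimes I$ term with the $I\otimes I\otimes L_{\succ}(x)$ (or $L_{\cdot}(x)$) term, the simplifications drop out. The mild care required is only in part (e), where one must ensure the cancellation of the off-leg terms $(I\otimes R_{\prec}(x)\otimes I)r_{23}\prec r_{12}$ with its $r_{\prec}$-version before rewriting the equation as an equality rather than a single identity equated to zero.
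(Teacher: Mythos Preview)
Your proposal is correct and is exactly the direct-substitution argument the paper has in mind; the paper itself gives no proof, simply writing ``The following conclusion is apparently,'' so you are merely supplying the bookkeeping it omits. One small slip: in your sketch for (b) you write $C(r):=r_{12}\cdot r_{13}+r_{23}\cdot r_{12}+r_{13}\succ r_{23}$, but the combination in (b) is $r_{12}\prec r_{13}+r_{23}\cdot r_{12}+r_{13}\succ r_{23}$ (the first product is $\prec$, not $\cdot$); this is only a typo in your description and does not affect the argument, since (b) is pure substitution with no regrouping needed, while the $\succ/\prec\to\cdot$ collapsing you describe is exactly what is required for (c) and (d).
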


\begin{rmk} \label{YE5} Assume that $\sigma_{123},\sigma_{132}:A\otimes A\otimes A\longrightarrow A\otimes A\otimes A$ 
are linear maps defined respectively by
\begin{equation*}\sigma_{123}(x\otimes y\otimes z)=z\otimes x\otimes y,
~~\sigma_{132}(x\otimes y\otimes z)=y\otimes z\otimes x,~~\forall~x,y,z\in A. \end{equation*}
If $r$ is skew-symmetric, then
\begin{align*}&\sigma_{123}( r_{12}\cdot r_{13}+r_{23}\succ r_{12}-r_{13}\prec r_{23})=-(r_{12}\prec r_{13}+r_{23}\cdot r_{12}+r_{13}\succ r_{23}),
\\&
\sigma_{132}( r_{12}\cdot r_{13}+r_{23}\succ r_{12}-r_{13}\prec r_{23})=r_{23}\prec r_{12}+r_{13}\cdot r_{23}-r_{12}\succ r_{13}.\end{align*}
\end{rmk}

\begin{pro} Let $r_{\prec}=r_{\succ}=r\in A\otimes A$ and $r$ be skew-symmetric.
Assume that
$\Delta_{\succ},\Delta_{\prec}$ defined by Eqs.\eqref{CD1}-\eqref{CD2}.
Then $(A,\succ,\prec,\Delta_{\succ},\Delta_{\prec})$ is an anti-dendriform D-bialgebra if and only if the
following equation holds:
\begin{equation} \label{YE6} r_{12}\cdot r_{13}+r_{23}\succ r_{12}-r_{13}\prec r_{23}=0.\end{equation}
\end{pro}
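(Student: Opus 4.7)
The plan is to combine the two previous technical results (Theorem \ref{YE3} and Proposition \ref{YE4}) with the symmetry observation in Remark \ref{YE5}. By Theorem \ref{YE3}, the claim reduces to verifying Eqs.~\eqref{CD3}--\eqref{CD10}. Since $r$ is skew-symmetric and $r_{\prec}=r_{\succ}=r$, Proposition \ref{YE4}(a) disposes of \eqref{CD3}--\eqref{CD6} for free. So the real content is reducing the remaining four conditions (Proposition \ref{YE4}(b)--(e)) to the single equation
\[
Y := r_{12}\cdot r_{13}+r_{23}\succ r_{12}-r_{13}\prec r_{23}=0.
\]

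Next I would unify the four expressions appearing in Proposition \ref{YE4}(b)--(e) by exhibiting them as images of $Y$ under the cyclic permutation operators $\sigma_{123}$ and $\sigma_{132}$ introduced in Remark \ref{YE5}. Concretely: the expression in (b) equals $-\sigma_{123}(Y)$, the expression in (c) equals $\sigma_{132}(Y)$, the expression in (d) equals $Y$ itself, and (e) is the assertion that a certain $x$-dependent operator applied to $\sigma_{132}(Y)$ equals another such operator applied to $Y$. The identifications in (b) and (c) are precisely the two identities stated in Remark \ref{YE5}; they follow from the skew-symmetry of $r$ by a short direct rewriting of the tensor indices.

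Assuming $Y=0$ (the ``$\Longleftarrow$'' direction), since $\sigma_{123}$ and $\sigma_{132}$ are linear bijections on $A^{\otimes 3}$ we also have $\sigma_{123}(Y)=0$ and $\sigma_{132}(Y)=0$; hence all four expressions in Proposition \ref{YE4}(b)--(e) vanish identically in $x$, so \eqref{CD7}--\eqref{CD10} are satisfied. Combined with Proposition \ref{YE4}(a), this means all of \eqref{CD3}--\eqref{CD10} hold, and Theorem \ref{YE3} produces the anti-dendriform D-bialgebra. For the ``$\Longrightarrow$'' direction, the same identifications show that any such D-bialgebra structure forces $\sigma_{123}(Y)$, $\sigma_{132}(Y)$, and $Y$ itself to lie in the common kernel of the families $R_\prec(x)\otimes I\otimes I - I\otimes I\otimes L_\succ(x)$, $R_\prec(x)\otimes I\otimes I + I\otimes I\otimes L_\cdot(x)$, and $R_\cdot(x)\otimes I\otimes I + I\otimes I\otimes L_\succ(x)$; the natural reading is that by demanding the equations for every $x$, these three expressions are forced to vanish, which via the invertibility of $\sigma_{123},\sigma_{132}$ is equivalent to $Y=0$.

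The potential obstacle is precisely this last converse passage: extracting $Y=0$ from the parametrized families of operator equations, rather than merely from the existence of $x$ for which the operators are injective. If needed, I would handle this by noting that the three equations together (one for $Y$, one for $\sigma_{123}(Y)$, one for $\sigma_{132}(Y)$) are all required to hold simultaneously, and then combining them through the $\mathbb{Z}/3$-action of cyclic permutation on $A^{\otimes 3}$ to conclude that $Y$ lies in a trivially intersected set. In any case, the sufficient direction, which is the substantive statement giving a concrete source of coboundary anti-dendriform D-bialgebras, follows immediately from the identifications above.
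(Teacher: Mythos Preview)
Your approach is essentially identical to the paper's: the paper's proof reads in full ``Combining Theorem \ref{YE3}, Proposition \ref{YE4} and Remark \ref{YE5}, we can get the statement,'' which is exactly the three-step reduction you spell out. Your identifications of the expressions in Proposition \ref{YE4}(b)--(e) with $\pm\sigma$-images of $Y$ are correct and match Remark \ref{YE5}.

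Your caveat about the converse direction is legitimate and worth flagging: the paper does not address it either. From the conditions in Proposition \ref{YE4}(b)--(e) holding for all $x$ one cannot in general deduce $Y=0$ without some nondegeneracy hypothesis on the algebra (e.g.\ if $R_\prec$ and $L_\succ$ both vanish identically, the conditions are vacuous while $Y$ is determined by $\cdot$ alone). So the ``only if'' direction, as stated, is not fully justified in either your argument or the paper's; the substantive ``if'' direction, which is what one actually uses to produce coboundary D-bialgebras, is correctly established.
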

\begin{proof} 
Combining Theorem \ref{YE3}, Proposition \ref{YE4} and Remark \ref{YE5}, we can get the statement.
\end{proof}

\begin{defi} Let $(A,\succ,\prec)$ be an anti-dendriform algebra and 
$r\in A\otimes A$. Eq. \eqref{YE6} is called the {\bf anti-dendriform Yang-Baxter equation} or {\bf AD-YBE} in short. 
\end{defi}

For a vector space $A$, the isomorphism $A\otimes A^{*}\simeq Hom (A^{*},A)$ identifies a $r\in A\otimes A$ with a map
$T_{r}:A^{*}\longrightarrow A$. Explicitly, 
\begin{equation} \label{YE7} T_{r}:A^{*}\longrightarrow A,\ \ \ T_{r}(w^{*})=\sum_{i}\langle w^{*},a_i\rangle b_i,
~~\forall~w^{*}\in A^{*},r=\sum_ia_i\otimes b_i.\end{equation}

\begin{thm} \label{YE8} Let $(A,\succ,\prec)$ be an anti-dendriform algebra and 
$r=\sum_{i}a_i\otimes b_i\in A\otimes A$ be skew-symmetric. Then the following conditions are equivalent:
\begin{enumerate}
		\item $r$ is a solution of AD-YBE, that is, 
\begin{equation*}  r_{12}\cdot r_{13}+r_{23}\succ r_{12}-r_{13}\prec r_{23}=0.\end{equation*}
\item The following equation holds:
\begin{align*}T_{r}(u^{*})\cdot T_{r}(v^{*})+T_{r}(R_{\prec}^{*}(T_{r}(u^{*}))v^{*}+L_{\succ}^{*}(T_{r}(v^{*}))u^{*})=0.\end{align*}
\end{enumerate}\end{thm}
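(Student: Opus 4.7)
The plan is to translate the identity in $A\otimes A\otimes A$ given by the AD-YBE into an identity in $A$ by pairing two of its tensor slots with arbitrary $u^*,v^*\in A^*$, and then recognize the result as exactly the claimed equation in terms of $T_r$. Since $A$ is finite-dimensional, the natural pairing $A^*\otimes A^*\otimes A\hookleftarrow A^{\otimes 3}$ is non-degenerate in the first two slots, so the AD-YBE holds as an element of $A^{\otimes 3}$ if and only if for every $u^*,v^*\in A^*$ the pairing $(u^*\otimes v^*\otimes \mathrm{id})$ applied to $r_{12}\cdot r_{13}+r_{23}\succ r_{12}-r_{13}\prec r_{23}$ vanishes. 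This gives the \emph{iff} direction automatically once one side is worked out.

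Writing $r=\sum_i a_i\otimes b_i$, the three summands of the AD-YBE unfold as
\[
r_{12}\cdot r_{13}=\sum_{i,j}a_i\cdot a_j\otimes b_i\otimes b_j,\quad
r_{23}\succ r_{12}=\sum_{i,j}a_i\otimes a_j\succ b_i\otimes b_j,\quad
r_{13}\prec r_{23}=\sum_{i,j}a_i\otimes a_j\otimes b_i\prec b_j.
\]
The first step is to pair the last two tensor factors with $u^*$ and $v^*$ respectively, producing an element of $A$ in the first slot. The key identity I will use repeatedly is that the skew-symmetry of $r$, $\sum_i a_i\otimes b_i=-\sum_i b_i\otimes a_i$, gives
\[
\sum_i\langle w^*,b_i\rangle a_i=-T_r(w^*),\qquad \sum_i\langle w^*,a_i\rangle b_i=T_r(w^*),\quad \forall\,w^*\in A^*.
\]
Applying this to $r_{12}\cdot r_{13}$ yields $(-T_r(u^*))\cdot(-T_r(v^*))=T_r(u^*)\cdot T_r(v^*)$, which is the first term on the claimed identity.

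For $r_{23}\succ r_{12}$, I will use $\langle u^*,a_j\succ b_i\rangle=\langle L_\succ^*(a_j)u^*,b_i\rangle$ to peel off the $i$-sum by skew-symmetry into $-T_r(L_\succ^*(a_j)u^*)$, and then use linearity of $L_\succ^*$ and the skew-symmetric identity in the $j$-sum (with $v^*$) to rewrite $\sum_j\langle v^*,b_j\rangle a_j=-T_r(v^*)$, giving $T_r(L_\succ^*(T_r(v^*))u^*)$. For $r_{13}\prec r_{23}$, an entirely analogous calculation using $\langle v^*,b_i\prec b_j\rangle=\langle R_\prec^*(b_j)v^*,b_i\rangle$ and $\sum_j\langle u^*,a_j\rangle b_j=T_r(u^*)$ produces $-T_r(R_\prec^*(T_r(u^*))v^*)$. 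Collecting the three contributions, the paired AD-YBE becomes
\[
T_r(u^*)\cdot T_r(v^*)+T_r\bigl(L_\succ^*(T_r(v^*))u^*\bigr)+T_r\bigl(R_\prec^*(T_r(u^*))v^*\bigr)=0,
\]
which is the stated condition (b). The equivalence of (a) and (b) then follows from the non-degeneracy argument of the first paragraph.

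The main obstacle will be bookkeeping: one must keep careful track of which dummy index is paired with which covector, and apply the skew-symmetry of $r$ on the correct slot each time (failing to do so flips signs and exchanges $T_r(u^*)$ with $T_r(v^*)$). A secondary point to state cleanly is that the three pairings above are independent, so no cancellation between them is needed, and the argument is fully reversible; this ensures (b) $\Rightarrow$ (a) and not merely (a) $\Rightarrow$ (b).
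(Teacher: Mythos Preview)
Your proof is correct and follows essentially the same direct-pairing computation as the paper: the paper pairs all three tensor slots with $w^{*}\otimes u^{*}\otimes v^{*}$, whereas you leave the first slot in $A$ and pair only the last two with $u^{*}$ and $v^{*}$, but the index bookkeeping and the two uses of skew-symmetry are identical. (One small slip: in your opening paragraph you write the contraction as $(u^{*}\otimes v^{*}\otimes \mathrm{id})$, i.e.\ on the first two slots, but your actual computation---correctly---pairs on the last two; just make the setup consistent with what you do.)
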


\begin{proof} According to Eq. \eqref{YE7}, for all $u^{*},v^{*},w^{*}\in A^{*}$, we have
\begin{align*}
 \langle T_{r}(u^{*})\cdot T_{r}(v^{*}), w^{*}\rangle&=\sum_{i,j}\langle u^{*},a_i\rangle \langle v^{*},a_j\rangle \langle b_i\cdot b_j,w^{*}\rangle
 =\sum_{i,j}\langle w^{*}\otimes u^{*}\otimes v^{*},b_i\cdot b_j\otimes a_i\otimes a_j\rangle
\\&=\sum_{i,j}\langle w^{*}\otimes u^{*}\otimes v^{*},a_i\cdot a_j\otimes b_i\otimes b_j\rangle
=\langle w^{*}\otimes u^{*}\otimes v^{*},r_{12}\cdot r_{13}\rangle,\end{align*}
\begin{align*}
 \langle T_{r}(R_{\prec}^{*}(T_{r}(u^{*}))v^{*}), w^{*}\rangle
 &=\sum_{i}\langle R_{\prec}^{*}(T_{r}(u^{*}))v^{*},a_i\rangle \langle b_i,w^{*}\rangle
 =\sum_{i}\langle v^{*},R_{\prec}(T_{r}(u^{*}))a_i\rangle \langle b_i,w^{*}\rangle
 \\& =\sum_{i,j}\langle u^{*},a_j\rangle \langle v^{*},a_i\prec b_j\rangle \langle b_i,w^{*}\rangle
 =\sum_{i,j}\langle w^{*}\otimes u^{*}\otimes v^{*},b_i \otimes a_j\otimes (a_i\prec b_j)\rangle 
\\&=-\sum_{i,j}\langle w^{*}\otimes u^{*}\otimes v^{*},a_i \otimes a_j\otimes (b_i\prec b_j)\rangle
=-\langle w^{*}\otimes u^{*}\otimes v^{*},a_i \otimes a_j\otimes (b_i\prec b_j)\rangle,\end{align*}
 \begin{align*}
 \langle T_{r}(L_{\succ}^{*}(T_{r}(v^{*}))u^{*}), w^{*}\rangle
 &=\sum_{i}\langle L_{\succ}^{*}(T_{r}(v^{*}))u^{*},a_i\rangle \langle b_i,w^{*}\rangle
 =\sum_{i}\langle u^{*},L_{\succ}(T_{r}(v^{*}))a_i\rangle \langle b_i,w^{*}\rangle
\\& =\sum_{i,j}\langle v^{*},a_j\rangle \langle u^{*},b_j\succ a_i\rangle \langle b_i,w^{*}\rangle
 =\sum_{i,j}\langle w^{*}\otimes u^{*}\otimes v^{*},b_i \otimes (b_j\succ a_i)\otimes a_j\rangle 
 \\&=\sum_{i,j}\langle w^{*}\otimes u^{*}\otimes v^{*},a_i \otimes (a_j\succ b_i)\otimes b_j\rangle
 =\langle w^{*}\otimes u^{*}\otimes v^{*},r_{23}\succ r_{12}\rangle.\end{align*}
 Thus, Item (a) holds if and only if Item (b) holds.
\end{proof}

Recall that an $\mathcal O$-operator $T$ of an associative 
algebra $(A,\cdot)$ associated to a representation $(V,l,r)$ is a linear map $T:V\longrightarrow A$ satisfying
$T(u)\cdot T(v)=T (l(T(u))v+r(T(v))u)$ for all $u,v\in V$.

Theorem \ref{YE8} is rewritten as follows.
\begin{ex}  Let $(A,\succ,\prec)$ be an anti-dendriform algebra and
$r=\sum_{i}a_i\otimes b_i\in A\otimes A$ be skew-symmetric. 
Then $r$ is a solution of the AD-YBE in $(A,\succ,\prec)$ if and only if $r$
is an $\mathcal O$-operator of the associative algebra $(A, \cdot)$ associated to $(A^{*},-R_{\prec}^{*},-L_{\succ}^{*})$.
 \end{ex}
 
 \begin{defi} Let $(A,\succ,\prec)$ be an anti-dendriform algebra and $(V,l_{\succ},r_{\succ},l_{\prec},r_{\prec})$ be
 its representation.
  An $\mathcal O$-operator $T$ of $(A,\succ,\prec)$ associated to $(V,l_{\succ},r_{\succ},l_{\prec},r_{\prec})$
   is a linear map $T:V\longrightarrow A$ satisfying
\begin{equation*}T(u)\succ T(v)=T (l_{\succ}(T(u))v+r_{\succ}(T(v))u),\ \ \ 
T(u)\prec T(v)=T (l_{\prec}(T(u))v+r_{\prec}(T(v))u),~~\forall~u,v\in V.\end{equation*}
\end{defi}

\begin{thm}
 Let $(A,\succ,\prec)$ be an anti-dendriform algebra and $(V,l_{\succ},r_{\succ},l_{\prec},r_{\prec})$ be
  a representation of $(A,\succ,\prec)$. Let
$(V^{*},-r_{\cdot}^{*},l_{\prec}^{*},r_{\succ}^{*},-l_{\cdot}^{*})$ 
be the dual representation of $A$ given by Proposition \ref{zr}. Let $\hat{A}=A\ltimes V^{*}$ and
 $T:V\longrightarrow A$ be a linear map which is identifies an element in $\hat{A}\otimes \hat{A}$ through
 ($Hom(V,A)\simeq A\otimes V^{*}\subseteq \hat{A}\otimes \hat{A}$).
  Then $r=T-\tau(T)$ is a skew-symmetric solution
of the AD-YBE in the anti-dendriform algebra $\hat{A}$ if and only if $T$ is an $\mathcal O$-operator
of $(A,\succ,\prec)$ associated with  $(V,l_{\succ},r_{\succ},l_{\prec},r_{\prec})$, where
$r_{\prec}^{*}+r_{\succ}^{*}=r_{\cdot}^{*}$ and $l_{\cdot}^{*}=l_{\prec}^{*}+l_{\succ}^{*}$
\end{thm}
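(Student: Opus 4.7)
The plan is as follows. Since $r=T-\tau(T)$ is automatically skew-symmetric by construction, the content of the theorem is the equivalence of the AD-YBE
$$r_{12}\cdot r_{13}+r_{23}\succ r_{12}-r_{13}\prec r_{23}=0$$
in $\hat A^{\otimes 3}$ with the two anti-dendriform $\mathcal O$-operator identities for $T:V\to A$. I would proceed by direct expansion, exploiting the explicit semi-direct product structure on $\hat A=A\ltimes V^*$ supplied by Proposition~\ref{zr}, and match the resulting tensor identities to the desired scalar identities via the canonical pairing $\langle\cdot,\cdot\rangle:V^*\otimes V\to k$.

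First I would choose a presentation $T=\sum_i e_i\otimes f_i$ with $e_i\in A$ and $f_i\in V^*$, so that $T(v)=\sum_i\langle f_i,v\rangle e_i$ and $r=\sum_i(e_i\otimes f_i-f_i\otimes e_i)$. By definition of the semi-direct product $\hat A=A\ltimes V^*$ associated to the dual representation $(V^*,-r_\cdot^*,l_\prec^*,r_\succ^*,-l_\cdot^*)$, the $A$-$A$ products in $\hat A$ are inherited from $(A,\succ,\prec)$, the $V^*$-$V^*$ products vanish, and the mixed products read
$$x\succ\eta=-r_\cdot^*(x)\eta,\quad \xi\succ y=l_\prec^*(y)\xi,\quad x\prec\eta=r_\succ^*(x)\eta,\quad \xi\prec y=-l_\cdot^*(y)\xi,$$
for all $x,y\in A$ and $\xi,\eta\in V^*$.

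Next, I would expand each of $r_{12}\cdot r_{13}$, $r_{23}\succ r_{12}$ and $r_{13}\prec r_{23}$ using $r=T+(-\tau(T))$. Each expansion yields four terms, classified by the summand of $\hat A^{\otimes 3}=(A\oplus V^*)^{\otimes 3}$ in which they sit. Because $T$ has one leg in $A$ and one in $V^*$, and because the $V^*$-$V^*$ product vanishes, the AD-YBE has nonzero components only in $A\otimes V^*\otimes V^*$, $V^*\otimes A\otimes V^*$ and $V^*\otimes V^*\otimes A$. Collecting terms and pairing each of these components with arbitrary $u\otimes v\in V\otimes V$ through the dual pairing on its two $V^*$-slots, the $V^*\otimes A\otimes V^*$-component reduces (after relabelling $u\leftrightarrow v$) to
$$T(u)\succ T(v)=T\bigl(l_\succ(T(u))v+r_\succ(T(v))u\bigr),$$
the $V^*\otimes V^*\otimes A$-component reduces to the analogous identity with $\succ$ replaced by $\prec$, and the $A\otimes V^*\otimes V^*$-component reduces to the associative identity $T(u)\cdot T(v)=T(l_\cdot(T(u))v+r_\cdot(T(v))u)$. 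Since the last is the sum of the first two, it is automatic once they hold; thus the AD-YBE is equivalent to $T$ being an $\mathcal O$-operator of $(A,\succ,\prec)$ with respect to $(V,l_\succ,r_\succ,l_\prec,r_\prec)$.

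The main obstacle will be the bookkeeping: three tensor products to expand, each involving four sign-combinations from $T-\tau(T)$, producing up to twelve terms distributed across the three relevant summands, with all cancellations using the dual identities $r_\cdot^*=r_\succ^*+r_\prec^*$ and $l_\cdot^*=l_\succ^*+l_\prec^*$ together with the bimodule axioms \eqref{R1}--\eqref{R6}. Careful index and sign tracking is essential, but the structural analysis above reduces the proof to precisely the verification that the canonical pairings produce the two claimed $\mathcal O$-operator identities.
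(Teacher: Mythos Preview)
Your proposal is correct and follows essentially the same route as the paper: write out the semi-direct product structure on $\hat A$, identify $T$ with $\sum_i T(v_i)\otimes v_i^*\in A\otimes V^*$, expand the three summands of the AD-YBE, and read off the $\mathcal O$-operator identities by pairing the $V^*$-slots against $V$. Your organization by the tensor type of each term (the summand of $(A\oplus V^*)^{\otimes 3}$ in which it lives) is slightly cleaner than the paper's term-by-term expansion, and your observation that the $A\otimes V^*\otimes V^*$-component yields only the associative identity (hence is redundant) is a nice structural remark that the paper leaves implicit.
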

\begin{proof}
For all $x+a^{*},y+b^{*}\in \hat{A}$ with $x,y\in A$ and $a^{*},b^{*}\in V^{*}$, the anti-dendriform algebraic structure 
$(\succ,\prec)$ on $\hat{A}$ is defined as follows:
\begin{align}\label{YE9}
 &(x+a^{*})\succ(y+b^{*})=x\succ y-r_{\cdot}^{*}(x)b^{*}+l_{\prec}^{*}(y)a^{*},
 \\& \label{YE10}(x+a^{*})\prec(y+b^{*})=x\prec y+r_{\succ}^{*}(x)b^{*}-l_{\cdot}^{*}(y)a^{*},\end{align}
 and the associated associative algebraic structure $\cdot$ is given by
 \begin{align*}
 (x+a^{*})\cdot(y+b^{*})=x\cdot y-r_{\prec}^{*}(x)b^{*}-l_{\succ}^{*}(y)a^{*}.\end{align*}
 Assume that $\{v_1,v_2,\cdot\cdot\cdot, v_n\}$ is a basis of $V$ and 
$\{v_1^{*},v^{*}_2,\cdot\cdot\cdot, v^{*}_n\}$ is the dual basis of $V^{*}$. Then 
$T=\sum_{i=1}^{n}T(v_i)\otimes v^{*}_i\in T(V)\otimes V^{*}\subseteq \hat{A}\otimes \hat{A}$. Note that
 \begin{align}\label{YE11}
 &l_{\succ}^{*}(T(v_{i})v_{j}^{*})
 =\sum_{k=1}^{n}\langle v_{j}^{*},l_{\succ}(T(v_{i})v_k\rangle v_{k}^{*}, \ \ \ r_{\succ}^{*}(T(v_{i})v_{j}^{*})=
 \sum_{k=1}^{n}\langle v_{j}^{*},r_{\succ}(T(v_{i})v_k\rangle v_{k}^{*},\\&
 \label{YE12} l_{\prec}^{*}(T(v_{i})v_{j}^{*})=
 \sum_{k=1}^{n}\langle v_{j}^{*},l_{\prec}(T(v_{i})v_k\rangle v_{k}^{*}
, \ \ \ r_{\prec}^{*}(T(v_{i})v_{j}^{*})= \sum_{k=1}^{n}\langle v_{j}^{*},r_{\prec}(T(v_{i})v_k\rangle v_{k}^{*}.\end{align}
 Using Eqs.\eqref{YE9}-\eqref{YE12}, we have
 \begin{align*}
 r_{12}\cdot r_{13}
 &=\sum_{i,j=1}^{n}T(v_i)\cdot T(v_j)\otimes v_{i}^{*}\otimes v_{j}^{*}-
 T(v_i)\cdot v_{j}^{*}\otimes v_{i}^{*}\otimes T(v_{j})
-v_{i}^{*}\cdot T(v_j)\otimes T(v_{i})\otimes v_{j}^{*}
\\&=\sum_{i,j=1}^{n}T(v_i)\cdot T(v_j)\otimes v_{i}^{*}\otimes v_{j}^{*}
+r_{\prec}^{*}(T(v_i))v_{j}^{*}\otimes v_{i}^{*}\otimes T(v_{j})
+l_{\succ}^{*}(T(v_j))v_{i}^{*}\otimes T(v_{i})\otimes v_{j}^{*}
\\&=\sum_{i,j=1}^{n}T(v_i)\cdot T(v_j)\otimes v_{i}^{*}\otimes v_{j}^{*}
+v_{j}^{*}\otimes v_{i}^{*}\otimes T(r_{\prec}(T(v_i))v_{j})
+v_{i}^{*}\otimes T(l_{\succ}(T(v_j))v_{i})\otimes v_{j}^{*},\end{align*}
\begin{align*}
- r_{13}\prec r_{23}
 &=\sum_{i,j=1}^{n}T(v_i)\otimes v_{j}^{*}\otimes v_{i}^{*}\prec  T(v_j)-
v_{i}^{*}\otimes v_{j}^{*}\otimes (T(v_{i})\prec T(v_{j}))
+v_{i}^{*}\otimes T(v_j)\otimes (T(v_{i})\prec v_{j}^{*})
\\&=\sum_{i,j=1}^{n}-T(v_i)\otimes v_{j}^{*}\otimes l_{\cdot}^{*}(T(v_j))v_{i}^{*}-
v_{i}^{*}\otimes v_{j}^{*}\otimes (T(v_{i})\prec T(v_{j}))
+v_{i}^{*}\otimes T(v_j)\otimes r_{\succ}^{*}(T(v_{i}) v_{j}^{*})
\\&=\sum_{i,j=1}^{n}-T(l_{\cdot}(T(v_j))v_i)\otimes v_{j}^{*}\otimes v_{i}^{*}-
v_{i}^{*}\otimes v_{j}^{*}\otimes (T(v_{i})\prec T(v_{j}))
+v_{i}^{*}\otimes T(r_{\succ}(T(v_{i})v_j)\otimes  v_{j}^{*})
,\end{align*}
\begin{align*}
 r_{23}\prec r_{12}
 &=\sum_{i,j=1}^{n}T(v_j)\otimes (T(v_i)\succ v_{j}^{*})\otimes v_{i}^{*}+
v_{j}^{*}\otimes (v_{i}^{*}\succ T(v_j))\otimes T(v_{i})
-v_{j}^{*}\otimes (T(v_i)\succ T(v_j)))\otimes v_{i}^{*})
\\&=\sum_{i,j=1}^{n}-T(v_j)\otimes r_{\cdot}^{*} (T(v_i))v_{j}^{*})\otimes v_{i}^{*}+
v_{j}^{*}\otimes l_{\prec}^{*}(T(v_j))v_{i}^{*}\otimes T(v_{i})
-v_{j}^{*}\otimes (T(v_i)\succ T(v_j))\otimes v_{i}^{*})
\\&=\sum_{i,j=1}^{n}-T(r_{\cdot} (T(v_i))v_j)\otimes v_{j}^{*})\otimes v_{i}^{*}+
v_{j}^{*}\otimes v_{i}^{*}\otimes T(l_{\prec}(T(v_j))v_{i})
-v_{j}^{*}\otimes (T(v_i)\succ T(v_j))\otimes v_{i}^{*})
.\end{align*}
Therefore, the conclusion holds.
\end{proof}


\begin{center}{\textbf{Acknowledgments}}
\end{center}
This work was supported by the Natural Science
Foundation of Zhejiang Province of China (LY19A010001), the Science
and Technology Planning Project of Zhejiang Province
(2022C01118).

\begin{center} {\textbf{Statements and Declarations}}
\end{center}
 All datasets underlying the conclusions of the paper are available
to readers. No conflict of interest exits in the submission of this
manuscript.


\end {document}